\newtheorem{theorem}{Theorem}
\newtheorem{definition}[theorem]{Definition}
\newtheorem{lemma}[theorem]{Lemma}
\newtheorem{corollary}[theorem]{Corollary}
\newtheorem{remark}[theorem]{Remark}
\newtheorem{proposition}[theorem]{Proposition}
\newtheorem*{theorem*}{Theorem}
\title{Link Bundles of Compact Toric Varieties of Real Dimension 8}
\author{Shahryar Ghaed Sharaf}
\date{\today}
\subjclass[2020]{14M25, 57S12, 57N80}
\keywords{Betti Numbers, Intersection Spaces, Link Bundle, Toric Varieties, Stratified Spaces}
\begin{document}

	%%%%%% Abstract %%%%%%
	\begin{abstract}
The main goal of this work is to determine the Betti numbers of the links of isolated singularities in a compact toric variety of real dimension 8, using the CW-structure of the links. Additionally, we construct the intersection spaces associated with these links. Using the duality of the Betti numbers of intersection spaces, we conclude that, similar to the case of toric varieties of real dimension 6, the Betti numbers of the links contain only one non-combinatorial invariant parameter. In the final section, we extend our discussion to arbitrary compact toric varieties and their associated link bundles. We show that for any given link $\mathcal{L}$, there exists a fiber bundle $\pi: \mathcal{L} \to X$ with fiber $S^{1}$, where the base space $X$ is a compact toric variety. Furthermore, using the Chern–Spanier exact sequences for sphere bundles, we show that for the fiber bundle $\pi:\mathcal{L}\longrightarrow X$, where $\dim_{\mathbb{R}}(X)=6$, the non-combinatorial invariant parameters appearing in the Betti numbers of $\mathcal{L}$ and $X$ are equal. In addition, we provide an algebraic description of the non-combinatorial invariant parameter of $X$ in terms of the cohomological Euler class of the fiber bundle.
    \end{abstract}

	\maketitle

    \tableofcontents

	\section{Introduction}
 A toric variety is a complex algebraic variety containing an algebraic torus as an open dense subset, such that the action of the torus on itself extends to the whole variety. From the point of view of algebraic topology, compact toric varieties are pseudomanifolds with even dimensional strata. Let $Y$ be an 8-dimensional compact toric variety. Consider the stratification on $Y$ induced by the preimages of the faces of the underlying polytope under a moment map:
 \begin{align*}
 Y=Y_{8} \supset Y_{6} \supset Y_{4} \supset Y_{2} \supset Y_{0}.	
 \end{align*}
 We follow the construction of links introduced in \cite{banagl2024link}. The link of $Y_6 - Y_{4}$ is homeomorphic to an $S^1$. The study of the links of $Y_4- Y_{2}$ and $Y_2 - Y_{0}$ goes along the same line as the study of the links of the middle and bottom strata of a 6-dimensional compact toric variety. Hence, the links of the connected components of $Y_{4} - Y_{2}$ are rational homology 3-spheres, and the links of the connected components of $Y_{2} - Y_{0}$ satisfy Poincaré duality rationally. Our main object of study in this work is the link of an isolated singularity $y \in Y_{0}$, which we denote by $\mathcal{X}$. If a point $y \in Y_{0}$ in the bottom stratum of $Y$ is regular, its link is referred to as the link of the isolated point $y$. The topological space $\mathcal{X}$ is a 7-dimensional  pseudomanifold with possibly 1-dimensional rational singularities. The pseudomanifold $\mathcal{X}$ has only even co-dimensional strata:
 \begin{align*}
 	\mathcal{X}= \mathcal{X}_{7} \supset \mathcal{X}_{5} \supset \mathcal{X}_{3} \supset \mathcal{X}_{1},
 \end{align*}
 where the links of the strata $\mathcal{X}_{5} -\mathcal{X}_{3}$ and $\mathcal{X}_{3} - \mathcal{X}_{1}$ are $S^{1}$ and rational homology 3-spheres, respectively.
 
In the first part of this work, we endow the link $\mathcal{X}$ with a CW structure. We modify the method introduced in \cite{banagl2024link}, such that the resulting CW-structure on $\mathcal{X}$ is relatively simple, which allows it to be used to compute the Betti numbers. At the end of Section 3, we arrive at Proposition \ref{Hom1Ver}, an intermediate result.
 In Section 4, we generalize the theory of intersection spaces for non-isolated singularities introduced by Banagl in \cite{banagl2010intersection} to pseudomanifolds with two rational strata. In other words, we use a concept of $\mathbb{Q}$-homology stratified pseudomanifolds to form the intersection space based on non-isolated singularity techniques for two strata. As mentioned above, the links of the middle stratum of $\mathcal{X}$ are rational homology 3-spheres and therefore do not disturb Poincaré duality rationally. Our approach adapts Rourke and Sanderson's concept of homology stratification \cite{rourkesanderson}, extending its properties to our framework. Section 4 serves as an intermediate tool to simplify the results obtained in Section 3.
 
  There are several cohomology theories available that restore duality. The intersection cohomology $IH^{\ast}$ of Goresky and MacPherson \cite{goresky1980intersection}, the $L^2$-cohomology of Cheeger \cite{cheeger1980hodge}, \cite{cheeger1979spectral},
 \cite{cheeger1983spectral} for appropriately conical metrics on the regular part, and the homotopy-theoretic methods of intersection spaces $IX$ of Banagl \cite{banagl2010intersection} yielding a cohomology theory $HI^{\ast}(X) := H^{\ast}(IX)$. We apply the theory of intersection spaces to the link $\mathcal{X}$ to establish the duality of Betti numbers and show that the parameter $b_{1}$ defined in Proposition \ref{Hom1Ver} equals zero. Therefore, we reformulate Proposition \ref{Hom1Ver} and represent two main theorems of this work.
 \begin{theorem*}
 	Let $\mathcal{X}$ be the link of an isolated singularity in an 8-dimensional toric variety. Let $(M , \partial M)$ be the $\mathbb{Q}$-manifold obtained by cutting out all 1-dimensional singularities of $\mathcal{X}$ and $i: \partial M \xhookrightarrow[]{ \;\;\;\; } M  $ be the inclusion map. Then, the Betti numbers of $\mathcal{X}$ are
 	\begin{align*}
 	b^{\mathcal{X}}_{7}=1, \;	b^{\mathcal{X}}_{6}=0, \;	&b^{\mathcal{X}}_{5}=f_{1}-4,\;	b^{\mathcal{X}}_{4}=3f_{1}-f_{2}-6, \;
 	b^{\mathcal{X}}_{3}=3f_{1}-f_{2}-6-b_{2}, \\	&b^{\mathcal{X}}_{2}=f_{1}-4-b_{2}, \;	b^{\mathcal{X}}_{1}=0, \; b^{\mathcal{X}}_{0}=1,
 \end{align*}
 	where 	$b_{2} = \operatorname{rk}(\operatorname{ker}(H_{3}(\partial M) \xrightarrow{ \; \;  i_{\ast}  \; \; } H_{3} (M)  ))$, and $f_{1}$ and $f_{2}$ are the numbers of the 2-dimensional and 1-dimensional faces of the underlying polytope of $\mathcal{X}$, respectively. 
 \end{theorem*}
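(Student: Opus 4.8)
The plan is to view the theorem as Proposition~\ref{Hom1Ver} supplemented by the single extra fact that $b_1^{\mathcal{X}}=0$, and to deduce that vanishing from generalized Poincar\'e duality for the intersection space of $\mathcal{X}$. Proposition~\ref{Hom1Ver}, obtained from the cellular computation of Section~3, already expresses every rational Betti number of $\mathcal{X}$ in terms of the combinatorial invariants $f_1,f_2$ of the underlying polytope together with two a priori non-combinatorial ranks: the first Betti number $b_1:=b_1^{\mathcal{X}}$, which appears as a free parameter, and $b_2=\operatorname{rk}\ker\!\big(H_3(\partial M)\to H_3(M)\big)$, which measures the failure of Poincar\'e duality for $\mathcal{X}$ in the middle degrees. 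Granting $b_1=0$, substitution into Proposition~\ref{Hom1Ver} gives the displayed values, and the identities $b_5^{\mathcal{X}}-b_2^{\mathcal{X}}=b_2=b_4^{\mathcal{X}}-b_3^{\mathcal{X}}$ exhibit $b_2$ as the only non-combinatorial invariant that remains.

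To prove $b_1=0$ I would build the intersection space $I\mathcal{X}$ by the non-isolated, two-stratum construction of Section~4 for $\mathbb{Q}$-homology stratified pseudomanifolds. Here one excises an open regular neighborhood of the $1$-dimensional stratum $\mathcal{X}_1$ to obtain the compact $\mathbb{Q}$-manifold with boundary $(M,\partial M)$ of the theorem, where $\partial M$ is the link bundle of $\mathcal{X}_1$ over the circles $\mathcal{X}_1$ with fiber the $5$-dimensional link $L$; this $L$ is only a $\mathbb{Q}$-homology manifold (not a manifold), because the links of $\mathcal{X}_3-\mathcal{X}_1$ are rational homology $3$-spheres, which is precisely why the $\mathbb{Q}$-homology stratified refinement is needed. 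One then performs a fiberwise spatial homology truncation of $L$ at the middle-perversity truncation degree, and reglues the resulting truncated link bundle to $M$ along its reference map to obtain $I\mathcal{X}$. Since $\mathcal{X}$ has only even-codimension strata, the perversity and homology-stratification hypotheses required by the Section~4 machinery are satisfied, and that construction supplies the duality isomorphism $\widetilde{H}^{k}(I\mathcal{X};\mathbb{Q})\cong\widetilde{H}_{7-k}(I\mathcal{X};\mathbb{Q})$ for all $k$, using orientability of $\mathcal{X}$.

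Next I would compute $H_\ast(I\mathcal{X};\mathbb{Q})$ from the Mayer--Vietoris sequence of the gluing, feeding in the homology of $M$, of $\partial M$, of the truncated link bundle, and of $\mathcal{X}$ itself as computed in Section~3. The truncation degree is chosen so that exactly the part of $H_\ast(L)$ that obstructs duality is removed and then reappears, via the cone formula for the truncated neighborhood, in the complementary degree; consequently all the terms of $H_\ast(I\mathcal{X})$ governed by $f_1,f_2$ and by $b_2$ are automatically matched under the duality isomorphism. The parameter $b_1$, on the other hand, lives in a degree in which the truncation is inert, so it passes unchanged into $H_\ast(I\mathcal{X})$, while its dual degree is pinned down by the purely combinatorial part of the Section~3 computation; imposing the duality isomorphism between those two degrees forces $b_1=0$. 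Substituting $b_1=0$ into Proposition~\ref{Hom1Ver} then produces the asserted Betti numbers.

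The step I expect to be the main obstacle is the Section~4 input being invoked: proving that this two-stratum, $\mathbb{Q}$-homology stratified intersection space of $\mathcal{X}$ genuinely satisfies rational Poincar\'e duality. One must verify that the link bundles satisfy the Rourke--Sanderson homology-stratification hypotheses in the $\mathbb{Q}$-coefficient setting, that the fiberwise spatial homology truncation of the non-manifold link $L$ can be performed compatibly with the stratification of $L$ inherited from $\mathcal{X}$ (the circle-link substratum lying in the closure of the rational-homology-$3$-sphere-link substratum), and that the resulting intersection pairing on $HI^\ast(\mathcal{X})$ is nondegenerate --- in particular that the self-pairing in the middle degrees behaves as predicted, so that $b_2$ is the duality defect in exactly the degrees claimed. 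Once this structural statement is in place, identifying the complementary-degree comparison that forces $b_1=0$ and carrying out the remaining $f_1,f_2$ bookkeeping is routine.
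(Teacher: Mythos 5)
Your overall strategy matches the paper's: start from Proposition~\ref{Hom1Ver}, then use the two-stratum $\mathbb{Q}$-pseudomanifold intersection space $I^{\bar n}\mathcal{X}$, its Mayer--Vietoris decomposition and the duality Theorem~\ref{IX} to force $b_1=0$, and substitute. But the mechanism you give for extracting $b_1=0$ does not actually work as stated. You claim $b_1$ ``passes unchanged'' into one degree of $H_\ast(I\mathcal{X})$ while ``its dual degree is pinned down by the purely combinatorial part.'' In fact $b_1$ sits in \emph{both} complementary middle degrees: from Lemma~\ref{MHOM}, $b_4^M=3f_1-f_2-6-(b_1+b_2)$ and $b_3^M=3f_1-f_2-6-b_1$, and via Mayer--Vietoris both $b_4^{I\mathcal{X}}$ and $b_3^{I\mathcal{X}}$ receive $b_1$-contributions together with correction terms (the rank of $\ker(H_3(\cap)\to H_3(M))$ and the rank of $\ker(H_2(\cap)\to H_2(M))$). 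Imposing $b_3^{I\mathcal{X}}=b_4^{I\mathcal{X}}$ alone produces a relation among these kernel ranks, $b_1$, and $b_2$, but it does not isolate $b_1$. The paper's proof of Theorem~\ref{IntHom} supplies the missing structural input: it sets up the ladder of Diagram~(\ref{5lemma}) comparing the Mayer--Vietoris sequence to the long exact sequence of the pair $(M,\partial M)$, uses \cite[Lemma~2.46]{banagl2010intersection} and the $5$-lemma to conclude $b_3^{I\mathcal{X}}=b_3^{(M,\partial M)}=b_4^M$, combines this with the duality $b_3^{I\mathcal{X}}=b_4^{I\mathcal{X}}$ to force $\ker(H_3(\cap)\to H_3(M))=0$, and then reads off $b_1\le 0$ from exactness at $H_3(\cap)\to H_3(M)$ together with $b_3^{\cap}=3f_1-f_2-6$ and $b_3^{M}=3f_1-f_2-6-b_1$. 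You should make this comparison with the relative sequence of $(M,\partial M)$ explicit; without it the argument stalls.

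There is a second, smaller gap. Proposition~\ref{Hom1Ver} defines $b_2$ via the boundary operators (Equation~(\ref{bp2})), whereas the theorem you are proving defines $b_2=\operatorname{rk}\ker\bigl(H_3(\partial M)\xrightarrow{i_\ast}H_3(M)\bigr)$. You take this identification for granted in your opening sentence. It requires an argument (Lemma~\ref{b2alg}), which the paper obtains from the long exact homology sequence of $(M,\partial M)$ together with rational Lefschetz duality for the $\mathbb{Q}$-manifold $M$ and the identification $H_i(\mathcal{X})\cong H_i(M,\partial M)$ for $i\ge 3$ from Equation~(\ref{MXhomology}), once $b_1=0$ is known. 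Including that computation would close the loop and make your reduction to Proposition~\ref{Hom1Ver} complete.
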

 \begin{theorem*}
 	Let $\mathcal{X}$ be the link of an isolated singularity in an 8-dimensional toric variety. Let $I^{\bar{n}}\mathcal{X}$ be the generalized intersection space associated with $\mathcal{X}$ with respect to middle perversity $\bar{n}$, as defined in Definition \ref{IXgen}. Then, the Betti numbers of $I^{\bar{n}}\mathcal{X}$ are given by
 \begin{align*}
 	b_{7}^{I^{\bar{n}}\mathcal{X}}&=0,\; 
 	b_{6}^{I^{\bar{n}}\mathcal{X}}=m-1, \;
 	b_{5}^{I^{\bar{n}}\mathcal{X}}=f_{2}-4-b_{2}, \;
 	b_{4}^{I^{\bar{n}}\mathcal{X}}=3f_{1}-f_{2}-6-b_{2}, \\
 	b_{3}^{I^{\bar{n}}\mathcal{X}}&=3f_{1}-f_{2}-6-b_{2},  \;
 	b_{2}^{I^{\bar{n}}\mathcal{X}}=f_{2}-4-b_{2}, \;
 	b_{1}^{I^{\bar{n}}\mathcal{X}}=m-1, \;
 	b_{0}^{I^{\bar{n}}\mathcal{X}}=1, 
 \end{align*}
 where $f_{1}$ and $f_{2}$ are the numbers of 2-dimensional and 1-dimensional faces of the underlying 3-dimensional polytope of $\mathcal{X}$, $\mathcal{P}$. The parameter $m$ denotes the number of rationally singular components of $\mathcal{X}_{1}$, and $b_{2}$ is defined in Equation (\ref{bp2}), or equivalently, in the above theorem.
 \end{theorem*}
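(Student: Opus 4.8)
The plan is to compute $\widetilde H_{*}(I^{\bar n}\mathcal{X};\mathbb{Q})$ directly from the construction of Definition~\ref{IXgen}, using the Mayer--Vietoris and pair sequences and the Poincaré duality for generalized intersection spaces established in Section~4, with the homology of $\mathcal{X}$ from Proposition~\ref{Hom1Ver} as the main input. Recall the set-up. The strata of $\mathcal{X}$ that carry a genuine rational modification are $\mathcal{X}_{3}$ — codimension $4$, links rational homology $3$-spheres — and $\mathcal{X}_{1}$ — codimension $6$, links the $5$-dimensional rational Poincaré space $L$; these are the ``two rational strata''. Cutting out the $1$-dimensional singularities gives the compact $\mathbb{Q}$-manifold with boundary $(M,\partial M)$, where $\partial M$ is the total space of a fiber bundle $\pi\colon\partial M\to\mathcal{X}_{1}$ over $\mathcal{X}_{1}\cong\bigsqcup S^{1}$, one circle for each of the $f_{0}=2+f_{2}-f_{1}$ vertices of $\mathcal{P}$ (Euler's formula), $m$ of them rationally singular, with fiber $L$; the $f_{2}$ edges of $\mathcal{P}$ index the components of $\mathcal{X}_{3}\setminus\mathcal{X}_{1}$. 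For the middle perversity $\bar n$ the truncation degrees are $4-1-\bar n(4)=2$ over $\mathcal{X}_{3}$ and $6-1-\bar n(6)=3$ over $\mathcal{X}_{1}$, and $I^{\bar n}\mathcal{X}$ is obtained from $\mathcal{X}$ by the corresponding iterated fiberwise spatial homology truncations: over $\mathcal{X}_{3}$ the link is replaced by its degree-$2$ truncation (rationally a point, which deletes its top class), and over $\mathcal{X}_{1}$ the link $L$ by a truncation retaining $H_{0},H_{1},H_{2}$, compatibly with the first.

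The computation then runs through the exact sequences of Section~4, which express $\widetilde H_{*}(I^{\bar n}\mathcal{X})$ in terms of $H_{*}(M)$ — read off from $H_{*}(\mathcal{X})$ via the analogous sequence for the decomposition $\mathcal{X}=M\cup_{\partial M}(\text{mapping cylinder of }\pi)$ — and the homology of the two truncated link bundles. The truncation over $\mathcal{X}_{3}$ contributes terms controlled by its $f_{2}$ components and by the deleted top classes of the rational homology $3$-spheres: this is the mechanism turning the $f_{1}$ in $b_{5}^{\mathcal{X}}=f_{1}-4$ into the $f_{2}$ in $b_{5}^{I^{\bar n}\mathcal{X}}=f_{2}-4-b_{2}$ (and dually for $b_{2}^{I^{\bar n}\mathcal{X}}$). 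The truncation over $\mathcal{X}_{1}$ is analysed circle by circle via the Wang sequence: over a rationally singular circle the fiber is a rational truncation of $L$ whose homology I would compute from $H_{*}(L;\mathbb{Q})$ and the monodromy, using the description of the $5$-dimensional link from Section~3 and the combinatorial meaning of ``rationally singular'', so that the net contribution is an affine function of $m$; over the remaining $f_{0}-m$ circles the link is already a rational homology $5$-sphere, truncated to a point. The single rank not determined by the combinatorics of $\mathcal{P}$, namely $b_{2}=\operatorname{rk}\ker\!\bigl(H_{3}(\partial M)\to H_{3}(M)\bigr)$ of Equation~(\ref{bp2}), enters through the image of $H_{3}(\partial M)$ in $H_{3}(M)$. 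Splicing everything together produces $\widetilde H_{k}(I^{\bar n}\mathcal{X};\mathbb{Q})$ as an affine function of $f_{1},f_{2},m,b_{2}$ and of the auxiliary parameter $b_{1}$ still carried by Proposition~\ref{Hom1Ver}.

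Finally I would invoke the reduced Poincaré duality $\widetilde H_{k}(I^{\bar n}\mathcal{X};\mathbb{Q})\cong\widetilde H_{7-k}(I^{\bar n}\mathcal{X};\mathbb{Q})$ for generalized intersection spaces proved in Section~4. The resulting equalities force the auxiliary parameter $b_{1}$ to vanish and reduce the remaining expressions to the displayed values; the answer is then visibly self-dual in the reduced range, with $b_{0}^{I^{\bar n}\mathcal{X}}=1$ and $b_{7}^{I^{\bar n}\mathcal{X}}=0$ — the latter because the truncation at the bottom stratum destroys the fundamental class of $\mathcal{X}$. Since the vanishing of $b_{1}$ is also exactly the reformulation of Proposition~\ref{Hom1Ver}, the two theorems are proved in tandem.

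The main obstacle is the explicit analysis of the iterated fiberwise truncation over $\mathcal{X}_{1}$. Three points need care: (i) extracting from the CW-model of Section~3 the rational homology and the stratified homotopy type of the $5$-dimensional link $L$, together with the combinatorial characterization of a rationally singular occurrence, so that $m$ and the truncation degree $3$ are unambiguous; (ii) controlling the monodromy of $\pi$ around each circle on $H_{*}(L;\mathbb{Q})$ — one wants it rationally trivial so that the Wang sequences split and yield the clean terms $m-1$ and $f_{2}-4-b_{2}$; and (iii) verifying that the degree-$3$ fiberwise truncation exists over the circle base and is compatible with the degree-$2$ truncation already performed over $\mathcal{X}_{3}\supset\mathcal{X}_{1}$, i.e. that the relevant obstructions vanish rationally. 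Once these are in hand, the diagram chases in the Mayer--Vietoris sequences and the elimination of $b_{1}$ via duality are routine bookkeeping against Proposition~\ref{Hom1Ver}.
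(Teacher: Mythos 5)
Your plan rests on a misreading of Definition~\ref{IXgen}. In Section~4 the paper deliberately passes to the coarser $\mathbb{Q}$-stratification $\mathcal{X}=\mathcal{X}_{7}\supset\mathcal{X}_{1}$, absorbing the $\mathcal{X}_{3}$ stratum into the regular part because its links are rational homology $3$-spheres; $(M,\partial M)$ is then a $\mathbb{Q}$-manifold with boundary (satisfying rational Lefschetz/Poincaré duality by \cite[Theorem 47, Corollary 49]{banagl2024link}), and $I^{\bar n}\mathcal{X}$ is defined by a \emph{single} cone attachment $M\cup_{g}\mathcal{C}\bigl(\sqcup_{i}S^{1}\times(\mathcal{L}_{i})_{<3}\bigr)$ over the rationally singular circles. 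There is no fiberwise truncation over $\mathcal{X}_{3}$, hence no iterated truncation, no compatibility-of-truncations problem, and no Wang-sequence/monodromy analysis. The ``three points needing care'' you list are artifacts of a depth-$2$ intersection-space construction the paper never performs, and pursuing them would be a substantial detour rather than a route to the stated theorem. (Your remark that the degree-$2$ truncation over $\mathcal{X}_{3}$ ``deletes its top class'' would, if it were part of the construction, visibly change $H_{*}$ and spoil the claimed answer; the point of the $\mathbb{Q}$-pseudomanifold framework is precisely that nothing is removed there.)

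The paper's actual argument is much shorter: (i) a Mayer--Vietoris sequence for $\mathcal{U}\simeq M$, $\mathcal{V}\cong\mathcal{C}(\sqcup_{i}S^{1}\times(\mathcal{L}_{i})_{<3})$, $\cap\simeq\sqcup_{i}S^{1}\times(\mathcal{L}_{i})_{<3}$, which immediately gives $b_{1}^{I\mathcal{X}}=m-1$ and, by the even-codimension duality $I^{\bar n}\mathcal{X}=I^{\bar m}\mathcal{X}$, $b_{6}^{I\mathcal{X}}=m-1$; (ii) the key step you omit: a commutative ladder (Diagram~\ref{5lemma}) comparing this Mayer--Vietoris sequence with the long exact sequence of the pair $(M,\partial M)$, furnished by \cite[Lemma 2.46]{banagl2010intersection} and the fact that $\mathcal{L}_{<3}\hookrightarrow\mathcal{L}$ is a rational homology isomorphism below degree~$3$, so that the five lemma gives $b_{3}^{I\mathcal{X}}=b_{3}^{(M,\partial M)}=b_{4}^{M}$; and (iii) combining this with Lemma~\ref{MHOM}, the generalized Poincaré duality of Theorem~\ref{IX}, and exactness at $H_{3}(\cap)\to H_{3}(M)$ to force $b_{1}=0$ and pin down the remaining ranks. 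You do correctly anticipate using duality to kill $b_{1}$, but without the $5$-lemma comparison with $(M,\partial M)$ the elimination does not go through, and the iterated-truncation framework you propose would first have to be built from scratch.
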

In the final section, we turn our focus to the homotopy groups of links. We consider the link $\mathcal{L}$ of a point in an arbitrary compact toric variety. We show that there exists a fiber bundle $\pi : \mathcal{L} \longrightarrow X_{\Sigma}$ with fiber $S^{1}$, where $X_{\Sigma}$ is a compact toric variety associated with the complete fan $\Sigma$. We provide an explicit construction of the complete fan $\Sigma$. From this construction, it follows that the fan $\Sigma$ is not unique. We use the Chern–Spanier long exact sequence for homology groups of sphere bundles to relate the Betti numbers of $\mathcal{L}$ and $X_{\Sigma}$. It turns out that the parameter $b_{2}$ defined in the above theorem coincides with the non-combinatorial invariant parameter $b$ in the Betti numbers of $X_{\Sigma}$ defined by McConnell in \cite{mcconnell1989rational} or in \cite{banagl2024link}. The Chern–Spanier long exact sequence for homology groups also provides an algebraic description of the parameter $b$ using the cohomological Euler class of the fiber bundle.

\section{Preliminaries}\label{Preliminaries}
 In this work, we mainly use definitions and constructions from \cite{banagl2024link}. We will also use standard terminology for fans, cones, and their faces from \cite{cox2024toric}. However, to maintain completeness and integrity, we briefly provide some essential definitions and results without proof.
\begin{remark}
	 Throughout this paper, we work with rational coefficients unless otherwise specified. We denote links and underlying polytopes by calligraphic characters.
\end{remark}
    
%	For the sake of simplicity, we introduce the following notation that we will use for some of our matrix representations. Let $\mathbf{A}$ be an $l  p \times k  h$ matrix where $l,p,k,h \in \mathbb{N}$. For our purposes, it is sometimes practical to study only a specific part of a matrix. Let us now consider $\mathbf{A}$ as an $p \times h$ matrix where each element (or block) of $\mathbf{A}$ is an $l \times k$ matrix. Note that from the context it should also be clear how one can obtain the rest of the matrix from an arbitrary block. Then we represent $\mathbf{A}$ as
%\begin{align*}
%	\mathbf{A} =
%	\begin{pmatrix}
%		l \overbrace{ \begin{Bmatrix}
%				\phantom{1} & \phantom{1}  & \phantom{1}\\
%				\phantom{1} &  \phantom{1}  & \phantom{1}\\
%				\phantom{1} &  \phantom{1}  & \phantom{1}\\
%		\end{Bmatrix}}^{k}
%	\end{pmatrix}_{p \times h}.
%\end{align*}

\begin{definition}
	Let $\Sigma$ be a complete fan in $\mathbb{R}^n$. 
	Reversing the inclusions and the dimensional grading in the face lattice of $\Sigma$, we obtain 
	the face lattice of an \textit{abstract} polyhedron which can be realized in 
	$\mathbb{R}^{n}$ as a regular (polyhedral) cell complex. We denote this abstract 
	polyhedron by $\mathcal{P}(\Sigma)$ and its geometrical realization by 
	$\vert \mathcal{P}(\Sigma) \vert$. The abstract polyhedron $\mathcal{P}(\Sigma)$ 
	is called \textbf{ the dual polyhedron} or \textbf{the dual polytope}. If the complete fan 
	$\Sigma$ is understood we will simply write $\mathcal{P}$. By abuse of notation 
	we will frequently write $\mathcal{P}$ for $\vert \mathcal{P} \vert$. The dimension of $\mathcal{
		P}(\Sigma)$ is $n$, since the top dimensional cell of $\mathcal{
		P}(\Sigma)$ is associated to the cone $\{0\}$ and it is $n$-dimensional. 
\end{definition}

\begin{remark}\label{Bijection}
	The faces in $\mathcal{P}(\Sigma)$ are in one-to-one correspondence 
	(in complementary dimension) with the cones in $\Sigma$. This defines a 
	bijection $\delta: \mathcal{P} \longrightarrow \Sigma$ and we denote the dual 
	cone to $\tau \in \mathcal{P}$ by $\delta(\tau)\in \Sigma$. In particular $\Sigma$ can be reconstructed from $\mathcal{
		P}$. From that perspective we shall also write $\Sigma= \Sigma_{\mathcal{P}}$. 
\end{remark}

\begin{definition} 
	A CW complex is called \textbf{regular} if its characteristic maps can 
	be chosen to be embeddings.
\end{definition}

\begin{definition}{} 
	The 0-dimensional faces of $\mathcal{P}$ are called \textbf{vertices}. The 1-dimensional faces are called \textbf{edges} and \textbf{facets} are faces with co-dimension 1. The set
	$\mathcal{P}^{i} = \{ F: \: F \: \textit{is a face of} \: \mathcal{P} , \: \dim(F) \leq i \}$ is the i-skeleton of $\mathcal{P}$.
\end{definition}

\noindent \textbf{Regular CW structure of the dual polytope $\mathcal{P}$}. \\
Note that $\mathcal{P}$ is homeomorphic to an $n$-disc $\mathcal{D}^{n}$, 
where $\dim(\mathcal{P})=n$. The structure of the underlying fan induces a regular CW structure on $\mathcal{P}$ as follows: \\
The vertex $\{ 0 \} \in \Sigma$ is dual to the interior of $\mathcal{P}$, represented by an $n$-dimensional cell in the CW structure. Each $k$-dimensional cone in $\Sigma$ is dual to an $(n-k)$-dimensional face of $\mathcal{P}$, represented by an $(n-k)$-dimensional cell in the CW structure. Note also that the boundary maps are induced from the inclusion of faces in $\Sigma$ (or dually in $\mathcal{P}$). This gives us a CW complex on $\mathcal{P} \cong \mathcal{D}^{n}$, which we will use later.
Although we do not require the exact form of the boundary operators of $\mathcal{P}$ in our preceding discussion, we briefly describe the attaching maps. 
As usual, we start with a discrete set, $X^{0}$, whose points represent the vertices of $\mathcal{P}$. We glue each 1-dimensional cell to its topological boundary in $\mathcal{P}$, which consists of its neighboring 0-dimensional cells.
Inductively, we attach each $k$-cell, representing a $k$-dimensional face of $\mathcal{P}$, to its lower-dimensional neighboring faces. Note that $\vert \mathcal{P} \vert - \operatorname{int}(\mathcal{P}) \cong S^{n-1}$. In the last step we attach $\operatorname{int}(\mathcal{P})$, represented by the $n$-cell, to its topological boundary $\vert \mathcal{P} \vert - \operatorname{int}(\mathcal{P})$. Note that due to the fact that each $k$-dimensional face of $\mathcal{P}$ is homeomorphic to $\mathcal{D}^{k}$, each characteristic map can be chosen to be an embedding. Thus, the above CW structure is a regular CW structure.\\
We proceed with the definition of topological pseudomanifolds. Next, we briefly explain the collapsing of parallel subtori within a torus. Finally, we outline the construction of the links of an arbitrary compact toric variety.

\begin{definition}\label{pseudomanifolds}
	We define a \textbf{topologically stratified space} inductively on dimension. A 0-dimensional topologically stratified space $X$ is a countable set with the discrete topology. For $m > 0$ an \textbf{$m$-dimensional topologically stratified space} is a para-compact Hausdorff topological space $X$ equipped with a filtration
	\begin{align*}
		X=X_{m} \supseteq X_{m-1} \supseteq \dots \supseteq X_{1} \supseteq X_{0} \supseteq X_{-1}= \emptyset
	\end{align*}
	by closed subsets $X_{j}$ such that if $x \in X_{j}-X_{j-1}$ there exists a neighborhood $\mathcal{N}_{x}$ of $x$ in $X$, a compact $(m-j-1)$-dimensional topologically stratified space $\mathcal{L}$ with filtration
	\begin{align*}
		\mathcal{L}=\mathcal{L}_{m-j-1} \supseteq \dots \supseteq \mathcal{L}_{1} \supseteq  \mathcal{L}_{0} \supseteq \mathcal{L}_{-1} = \emptyset,
	\end{align*}  
	and a homeomorphism $\phi : \mathcal{N}_{x} \longrightarrow \mathbb{R}^{j} \times \mathcal{C}(\mathcal{L}),$
	where $\mathcal{C}(\mathcal{L})$ is the open cone on $\mathcal{L}$, such that $\phi$ takes $\mathcal{N}_{x} \cap X_{j+i+1}$ homeomorphically onto
	\begin{align*}
		\mathbb{R}^{j} \times \mathcal{C}(\mathcal{L}_{i}) \subseteq \mathbb{R}^{j} \times \mathcal{C}(\mathcal{L})
	\end{align*}
	for $m-j-1 \geq i \geq 0$, and $\phi$ takes $\mathcal{N}_{x} \cap X_{j}$ homeomorphically onto
	\begin{align*}
		\mathbb{R}^{j} \times \{ \text{vertex of }\; \mathcal{C}(\mathcal{L}) \}.
	\end{align*}
\end{definition}
    Note that the homeomorphism type of the stratified space $\mathcal{L}$ is not uniquely determined by the above definition.
\begin{definition}
	In Definition \ref{pseudomanifolds}, any stratified space $\mathcal{L}$ that satisfies the required properties is referred to as a \textbf{link} of the stratum at $x$.   
\end{definition}

\begin{remark}\label{linkandstratum}
	It follows that $X_{j}-X_{j-1}$ is a $j-$dimensional topological manifold. (The empty set is a manifold of any dimension.) We call the connected components of these manifolds the \textbf{strata} of $X$.
\end{remark}

\begin{definition}\label{PSMFD}
	An \textbf{$m$-dimensional topological pseudomanifold} is a para-compact Hausdorff topological space $X$ which possesses a topological stratification such that $X_{m-1}=X_{m-2}$
	and $X-X_{m-1}$ is dense in X.
\end{definition}

\begin{definition}\label{DefSing}
	We call a stratum (homologically) \textbf{singular} if none of its links is a homology sphere. A stratum is (homologically) \textbf{rationally singular} if none of its links is a rational homology sphere.
\end{definition}
Let $T^{n}=$\scalebox{1.3}{ $\sfrac{\mathbb{R}^{n}}{\mathbb{Z}^{n}}$}$ 
\cong \overbrace{ S^{1} \times \dots \times S^{1}}^{n}$ 
be an $n$-torus. The projection map $\pi: \mathbb{R}^{n} \longrightarrow  
\text{\scalebox{1.3}{ $\sfrac{\mathbb{R}^{n}}{\mathbb{Z}^{n}} $}}$ maps a 
\textbf{rational} $k$-dimensional linear subspace $\mathbf{V} \subset \mathbb{R}^{n}$ 
to a compact subtorus $\pi(\mathbf{V}) \subset T^{n}$. The \textbf{rationality} here 
means that $\mathbf{V}$ has a basis in $\mathbb{Z}^{n}$. Now each affine $k$-plane 
parallel to $\mathbf{V}$  in $\mathbb{R}^{n}$ determines a subtorus ``parallel'' to 
$\pi(\mathbf{V})$. Collapsing each of these parallel subtori to a point will 
give us a compact subspace $\text{\scalebox{1.3}{$\sfrac{T^{n}}{\pi(\mathbf{V})}$} } \subset T^{n}$, which is obviously homeomorphic to $T^{n-k}$.\\
Let $X$ be a toric variety over a polytope $\mathcal{
	P}$. Consider the natural projection map $X \xrightarrow{p }\mathcal{P}$. The preimage $X_{2i} = p^{-1}(\mathcal{P}^{i})$ is a $2i$-dimensional topological space. The filtration
\begin{align}\label{strtTV}
	X_{\mathcal{P}}=X_{2m} \supset X_{2(m-1)} \supset \dots \supset X_{2} \supset X_{0},
\end{align}
is a stratification of the toric variety $X_{\mathcal{P}}$.

\noindent\textbf{Construction of links}\label{Triv} \phantom{.} \\ Let $\tau$ be a face of $\mathcal{P}$. Let $\mathcal{M}_{\tau}$ be an abstract polytope, geometrically realized as a subspace of $\mathcal{P}$, and defined as follows: \\
Let $\mathcal{S}_{\tau}=\big\{ \sigma \vert \sigma \in \mathcal{P}, \; \sigma \cap \tau \neq \emptyset \; \text{and} \; \dim(\sigma) > \dim(\tau) \big\}$ be the set of all higher dimensional neighboring faces of $\tau$ in $\mathcal{P}$. \\
We shall define an abstract polytope $\mathcal{M}_{\tau}$ with the following properties: 
\begin{enumerate}
	\item $\forall \sigma \in \mathcal{S}_{\tau}: \exists! \; \gamma_{\sigma} \in \mathcal{M}_{\tau} \; \text{such that} \; \operatorname{int}(\gamma_{\sigma}) \cap \operatorname{int}(\sigma) \neq \emptyset \; \text{with} \; \dim(\gamma_{\sigma})= \dim(\sigma)-(1+\dim(\tau))$ and $\operatorname{int}(\gamma_{\sigma}) \cap \operatorname{int}(\sigma^{\prime})= \emptyset$ if $\sigma^{\prime} \in \mathcal{P}$ and $\sigma^{\prime} \neq \sigma$.
	\item If $\omega, \sigma \in \mathcal{S}_{\tau}$ with $\omega \prec \sigma$, then we demand that $\gamma_{\omega} \prec \gamma_{\sigma}$ and $\gamma_{\sigma} \cap \omega = \gamma_{\omega}$.
\end{enumerate}
Note that $\operatorname{int}(\sigma) \cong \operatorname{int}(\mathcal{D}^{\dim(\sigma)})$. So, the first requirement can be satisfied by embedding $\operatorname{int}(\mathcal{D}^{\dim(\gamma_{\sigma})})$ in $\operatorname{int}(\mathcal{D}^{\dim(\sigma)})$. Yet the second condition describes how the boundary of $\mathcal{D}^{\dim(\gamma_{\sigma})}$ meets the boundary of $\mathcal{D}^{\dim(\sigma)}$. 
We refer to $\mathcal{M}_{\tau}$ as a \textbf{base} of the link of $x \in p^{-1}(\operatorname{int}(\tau)) \subset X_{\mathcal{P}}$. \\
\begin{proposition}\label{Existence2}
	Let $\Sigma$ be a complete fan in $\mathbb{R}^n$ and $\mathcal{
		P}$ its dual polytope. Given any face $\tau \in \mathcal{P}$ there exists an abstract polytope $\mathcal{M}_{\tau}$ satisfying the above conditions $(1)$ and $(2)$ and having a geometrical realization $\vert \mathcal{M}_{\tau} \vert$ as a subpolyhedron of a geometrical realization of $\mathcal{P}$. 
\end{proposition}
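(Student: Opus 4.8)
The plan is to construct $\mathcal{M}_\tau$ by induction on the dimension of the neighboring faces $\sigma \in \mathcal{S}_\tau$, building up the geometric realization $\vert\mathcal{M}_\tau\vert$ one cell at a time inside $\vert\mathcal{P}\vert$. Observe first that $\mathcal{S}_\tau$, ordered by $\preceq$, is itself the face poset of an abstract polytope: under the duality bijection $\delta$ of Remark \ref{Bijection}, the faces $\sigma$ with $\sigma \succ \tau$ correspond to the cones $\delta(\sigma) \prec \delta(\tau)$, i.e. to the proper faces of the single cone $\delta(\tau)$, and dualizing back the face lattice of $\delta(\tau)$ (a polyhedral cone of dimension $n - \dim\tau$) yields exactly the face lattice of a polytope of dimension $n - \dim\tau - 1$. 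The shift $\dim(\gamma_\sigma) = \dim(\sigma) - (1 + \dim\tau)$ in condition (1) is precisely the dimension shift of this duality, so the combinatorial type of $\mathcal{M}_\tau$ is forced: it is the polytope dual to $\delta(\tau)$, equivalently the vertex figure / link of $\tau$ in $\mathcal{P}$. This settles existence of $\mathcal{M}_\tau$ as an abstract polytope; the real content is the simultaneous geometric realization as a subpolyhedron of $\vert\mathcal{P}\vert$ meeting each $\mathrm{int}(\sigma)$ in the prescribed way.

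For the geometric realization I would proceed as follows. Pick a point $p_\tau \in \mathrm{int}(\tau)$. For the minimal faces $\sigma \in \mathcal{S}_\tau$ (those with $\dim\sigma = \dim\tau + 1$), condition (1) asks for a point $\gamma_\sigma \in \mathrm{int}(\sigma)$; choose one for each such $\sigma$. Inductively, suppose the cells $\gamma_\omega$ have been realized inside $\vert\mathcal{P}\vert$ for all $\omega \in \mathcal{S}_\tau$ with $\dim\omega < \dim\sigma$. The faces $\omega \prec \sigma$ in $\mathcal{S}_\tau$ are exactly the faces of $\sigma$ that still contain $\tau$; by induction their realized cells $\gamma_\omega$ have already been glued together along their common faces (using condition (2) at lower stages) to form a subcomplex of $\partial\sigma$ homeomorphic to $\mathcal{D}^{\dim\sigma - \dim\tau - 2} = \mathcal{D}^{\dim\gamma_\sigma - 1}$ — this is the already-built portion of $\mathcal{M}_\tau$ lying on $\partial\sigma$. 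One then defines $\gamma_\sigma \subset \bar\sigma$ to be the cone from a chosen interior point of $\sigma$ over that disc; since $\mathrm{int}(\sigma) \cong \mathrm{int}(\mathcal{D}^{\dim\sigma})$ one can arrange $\gamma_\sigma \cong \mathcal{D}^{\dim\gamma_\sigma}$ with $\mathrm{int}(\gamma_\sigma) \subset \mathrm{int}(\sigma)$ and $\partial\gamma_\sigma = \bigcup_{\omega \prec \sigma,\, \omega \in \mathcal{S}_\tau}\gamma_\omega$, which is exactly conditions (1) and (2) for $\sigma$. The completeness of the fan guarantees $\mathcal{P}$ has no "boundary" issues: every proper face of $\delta(\tau)$ is itself a cone in $\Sigma$, so $\mathcal{S}_\tau$ really is the full face lattice of the polytope dual to $\delta(\tau)$ and the induction goes through to the top.

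The step I expect to be the main obstacle is checking that the inductively constructed pieces on $\partial\sigma$ actually fit together into an embedded disc of the correct dimension, rather than merely an abstract cell complex — i.e. verifying that $\bigcup_{\omega \prec \sigma}\gamma_\omega$ is a PL ball and that the cone construction inside $\bar\sigma$ keeps everything embedded and disjoint from $\mathrm{int}(\sigma')$ for all other $\sigma'$. This requires knowing that the union of the $\gamma_\omega$ over the facets of the polytope $\mathcal{M}_\tau \cap \partial\sigma$ is the boundary sphere-minus-a-cell of a ball, which follows because $\mathcal{S}_\tau \cap \{\omega : \omega \preceq \sigma\}$ is the face lattice of the (closed) polytope $\gamma_\sigma$ we are trying to build and polytope boundaries are PL spheres; one must simply be careful to choose the interior cone points generically so that no accidental intersections with other faces of $\mathcal{P}$ occur, which is possible since each $\mathrm{int}(\sigma')$ for $\sigma' \neq \sigma$ is disjoint from $\bar\sigma$ or meets it only in a common face already handled. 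I would organize the write-up so that the abstract combinatorial identification of $\mathcal{M}_\tau$ with the dual of $\delta(\tau)$ is done first and cleanly, and then the geometric realization is the induction just described, with the embedding/disjointness bookkeeping as the one place demanding care.
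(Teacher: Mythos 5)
The paper itself does not give a proof of Proposition~\ref{Existence2} but defers to \cite[Proposition~14]{banagl2024link}, so a direct comparison is not possible; what follows is an assessment of your argument on its own terms.

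Your combinatorial identification of $\mathcal{M}_{\tau}$ with the polytope dual to the cone $\delta(\tau)$ (equivalently, the vertex figure of $\tau$ in $\mathcal{P}$) is correct, and the dimension bookkeeping $\dim\gamma_{\sigma}=\dim\sigma-\dim\tau-1$ matches. However, there is a definite slip in the inductive step: the union $\bigcup_{\omega\prec\sigma,\,\omega\in\mathcal{S}_{\tau}}\gamma_{\omega}$ is the \emph{full} boundary $\partial\gamma_{\sigma}$, hence a sphere $S^{\dim\gamma_{\sigma}-1}$, not a disc $\mathcal{D}^{\dim\gamma_{\sigma}-1}$ as you write. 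In the dual face lattice the proper faces of $\gamma_{\sigma}$ are exactly the $\gamma_{\omega}$ for $\tau\prec\omega\prec\sigma$, and the union of the proper faces of a polytope cell is its boundary sphere; in the lowest nontrivial case $\dim\sigma=\dim\tau+2$ the diamond property gives exactly two intermediate $\omega$, so the union is $S^{0}$, not an interval. Your final paragraph repeats this (``boundary sphere-minus-a-cell''), so it is not a one-off typo. Fortunately the construction you then carry out is the one that works for a sphere: coning $S^{k-1}$ from a point of $\operatorname{int}(\sigma)$ gives $\mathcal{D}^{k}$ with boundary $S^{k-1}=\bigcup\gamma_{\omega}$, which is exactly the equation $\partial\gamma_{\sigma}=\bigcup_{\omega}\gamma_{\omega}$ you state; coning a genuine disc would \emph{not} yield that. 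So the slip does not kill the argument, but it should be corrected, as it reflects a misconception about what the lower-dimensional pieces assemble into.

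The remaining burden you flag — that the inductively glued pieces form an embedded PL ball, that cone points can be chosen so nothing escapes $\bar\sigma$ or hits $\operatorname{int}(\sigma')$ for $\sigma'\neq\sigma$ — is real and is where most of the actual work lies; you identify it but leave it open. A cleaner route, and the one the paper itself gestures at elsewhere (e.g.\ ``we choose $\mathcal{P}$ to be the intersection of a 2-dimensional plane with $\mathcal{M}$\ldots''), is to realize $\vert\mathcal{M}_{\tau}\vert$ in one step as $\vert\mathcal{P}\vert\cap H$, where $H$ is a generic affine subspace of dimension $n-\dim\tau-1$ passing close to a point of $\operatorname{int}(\tau)$ and transverse to every $\sigma\in\mathcal{S}_{\tau}$. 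Genericity of $H$ then gives conditions (1) and (2) simultaneously with no induction and no embedding bookkeeping, and the combinatorial identification you made becomes a consequence rather than a separate ingredient.
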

\begin{proof}
	The proof can be found in \cite[Proposition 14]{banagl2024link}
\end{proof}
Now, let $\vert \mathcal{M}_{\tau} \vert \subset \vert \mathcal{P} \vert$ be a geometrical realization of $\mathcal{M}_{\tau}$ and $n=\dim(\vert \mathcal{P} \vert)$. Recall the bijection $\delta: \mathcal{P} \longrightarrow \Sigma_{\mathcal{P}}$ that we introduced earlier. For each $\gamma \in \mathcal{M}_{\tau}$ choose $\sigma_{\gamma} \in \mathcal{S}_{\tau}$ such that $\gamma \cap \sigma_{\gamma} \neq \emptyset$. Note $\tau \prec \sigma_{\gamma}$ and thus $\delta (\sigma_{\gamma}) \prec \delta(\tau)$ in $\Sigma_{\mathcal{P}}$. This implies that $\pi(\delta (\sigma_{\gamma})) \subset \pi(\delta(\tau))$ in $T^{n}$. We construct $\mathcal{L}_{\tau}$, the link of a point $x \in p^{-1}(\operatorname{int}(\tau))$, by means of the following map: \\
\begin{align}
	\mathcal{M}_{\tau} \times T^{n} &\longrightarrow \mathcal{L}_{\tau} \nonumber \\
	\{y\} \times T^{n}  &\longmapsto \{y\} \times \scalebox{1.3}{ $\sfrac{T^{n}}{\Big(\pi (\delta(\sigma_{\gamma}))\times \big( \sfrac{T^{n}}{\pi (\delta(\tau))} \big) \Big)}$},
\end{align}\label{Linkmap}
where $y \in \operatorname{int}(\gamma)$.
\begin{remark}\label{StratProof}
	There is a natural projection $\mathcal{L}_{\tau} \xrightarrow{\:\:\: p_{\mathcal{L}} \:\:\:} \mathcal{M}_{\tau}$. Similarly, we define $\big( \mathcal{L}_{\tau} \big)_{2i+1}=p_{\mathcal{L}}^{-1}(\mathcal{M}^{i}_{\tau})$.
\end{remark}
\begin{proposition}\label{trivlink}
	Let $\Sigma$ be a complete fan and $\mathcal{P}$ the associated dual polytope. Then $X_{\mathcal{P}}$, the associated toric varieties with $\mathcal{P}$, is a topological pseudomanifold with a trivial link bundle. 	
\end{proposition}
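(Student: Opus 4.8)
The plan is to show that the link $\mathcal{L}_\tau$ at a point $x \in p^{-1}(\operatorname{int}(\tau))$ is, up to stratum-preserving homeomorphism, independent of $x$, so that the local structure of $X_{\mathcal{P}}$ near the stratum $p^{-1}(\operatorname{int}(\tau))$ is a product $\mathbb{R}^{\dim\tau} \times \mathcal{C}(\mathcal{L}_\tau)$ glued along all of $\operatorname{int}(\tau)$; this simultaneously verifies that $X_{\mathcal{P}}$ is a topological pseudomanifold in the sense of Definition \ref{PSMFD} and that the resulting link bundle over each stratum is trivial. First I would recall that $\operatorname{int}(\tau)$ (a stratum of the filtration (\ref{strtTV}), after contracting consecutive equal terms since the strata are even-dimensional) is an open cell, hence contractible, and that the local conical structure of a toric variety over $\mathcal{P}$ depends only on the combinatorics of the star of $\tau$ — more precisely, on the sub-fan $\operatorname{Star}(\delta(\tau)) \subset \Sigma$ of cones containing $\delta(\tau)$, equivalently on $\mathcal{S}_\tau$. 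Since the base $\mathcal{M}_\tau$ of the link (Construction of links, Proposition \ref{Existence2}) and the collapsing data $\pi(\delta(\sigma_\gamma)) \subset \pi(\delta(\tau)) \subset T^n$ appearing in the link map (\ref{Linkmap}) are defined purely from this combinatorial data and do not involve $x$, the space $\mathcal{L}_\tau$ is literally the same space for every $x$ in the stratum.

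Next I would assemble this into the bundle statement. Over the stratum $S_\tau := p^{-1}(\operatorname{int}(\tau))$ — which is a $2\dim\tau$-dimensional manifold, in fact an orbit of the torus action fibering over $\operatorname{int}(\tau)$ — consider the restriction of $p$ together with the normal conical structure; I would cover $\operatorname{int}(\tau)$ by the (single, since it is a cell) chart and exhibit a homeomorphism of a neighborhood of $S_\tau$ in $X_{\mathcal{P}}$ minus $S_\tau$ with $S_\tau \times (\mathcal{C}(\mathcal{L}_\tau) \setminus \{\text{vertex}\})$, stratum-preservingly. Because $\operatorname{int}(\tau)$ is contractible (indeed a Euclidean cell), any fiber bundle over the stratum with fiber $\mathcal{L}_\tau$ is trivial; but in fact we get triviality on the nose from the explicit product description, without even needing a contractibility argument, because the link data is constant along the stratum. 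I would then note that doing this for every face $\tau \in \mathcal{P}$ equips $X_{\mathcal{P}}$ with a topological stratification, that $X_{2m} \setminus X_{2m-2}$ is the open dense torus orbit, and that the required local homeomorphisms $\phi$ of Definition \ref{pseudomanifolds} are exactly the product charts just constructed, sending $\mathcal{N}_x \cap X_{2i}$ to $\mathbb{R}^{2\dim\tau} \times \mathcal{C}((\mathcal{L}_\tau)_{2i+1})$ in the notation of Remark \ref{StratProof}.

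I expect the main obstacle to be the careful bookkeeping at the level of the stratified structure: one must check that the link $\mathcal{L}_\tau$ constructed via (\ref{Linkmap}) really is a compact topologically stratified space of the correct dimension $2m - 2\dim\tau - 1$ with the filtration $(\mathcal{L}_\tau)_{2i+1} = p_{\mathcal{L}}^{-1}(\mathcal{M}_\tau^i)$ matching up correctly with the filtration of $X_{\mathcal{P}}$, and that the collapsing of parallel subtori (the map $\{y\}\times T^n \mapsto \{y\}\times T^n/(\pi(\delta(\sigma_\gamma)) \times (T^n/\pi(\delta(\tau))))$) is well-defined and continuous as $\gamma$ varies, i.e. that the quotients over faces $\omega \prec \sigma$ in $\mathcal{S}_\tau$ are compatible — this is precisely where condition (2) in the construction of $\mathcal{M}_\tau$ and the inclusion $\pi(\delta(\sigma_\gamma)) \subset \pi(\delta(\tau))$ are used. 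A secondary technical point is verifying that the resulting stratified space satisfies $X_{2m-1} = X_{2m-2}$ and density of the regular part, which is immediate from the fact that the torus orbits have even real dimension and the big orbit is open and dense. Much of this, however, is already contained in \cite{banagl2024link}; I would invoke the construction there and focus the argument on the observation that the link data is constant along each stratum, which yields triviality of the bundle directly.
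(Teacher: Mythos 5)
Your overall strategy --- exhibit a neighborhood of the stratum $S_\tau := p^{-1}(\operatorname{int}(\tau))$ that is literally a product $S_\tau \times \mathcal{C}(\mathcal{L}_\tau)$, read off from the toric/combinatorial construction, and note this simultaneously verifies the pseudomanifold axioms of Definition~\ref{pseudomanifolds} and triviality of the link bundle --- is the correct one, and it is what the cited Proposition~19 of \cite{banagl2024link} does; the present paper itself defers to that reference rather than reproving the statement.

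There is, however, one wrong step that you should strike. You assert that ``because $\operatorname{int}(\tau)$ is contractible (indeed a Euclidean cell), any fiber bundle over the stratum with fiber $\mathcal{L}_\tau$ is trivial.'' This conflates the open cell $\operatorname{int}(\tau) \subset \mathcal{P}$ with the stratum $S_\tau \subset X_{\mathcal{P}}$. The stratum is the torus orbit $O(\delta(\tau)) \cong (\mathbb{C}^*)^{\dim\tau}$, which is homotopy equivalent to $T^{\dim\tau}$ and hence is \emph{not} contractible once $\dim\tau \geq 1$; contractibility of $\operatorname{int}(\tau)$ trivializes the bundle $p\vert_{S_\tau}: S_\tau \to \operatorname{int}(\tau)$, but that says nothing a priori about a bundle whose base is $S_\tau$. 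The argument that actually carries the proof is the second one you offer: the affine chart around the orbit splits as $U_{\delta(\tau)} \cong O(\delta(\tau)) \times U'_{\delta(\tau)}$, where $U'_{\delta(\tau)}$ is the affine toric variety of the cone $\delta(\tau)$ in the quotient lattice $N/(N\cap\operatorname{span}\delta(\tau))^{\perp}$-complement, and $U'_{\delta(\tau)}$ is an open cone on $\mathcal{L}_\tau$; equivalently, the collapsing data $\pi(\delta(\sigma_\gamma)) \subset \pi(\delta(\tau))$ in the link map~(\ref{Linkmap}) is defined combinatorially and does not vary over the orbit. You should lead with that explicit product and delete the contractibility clause. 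The remaining bookkeeping in your proposal --- the matching of filtrations $(\mathcal{L}_\tau)_{2i+1}=p_{\mathcal{L}}^{-1}(\mathcal{M}^i_\tau)$, the use of condition~(2) on $\mathcal{M}_\tau$ to make the collapsings compatible across faces, and the density and even codimension of the strata --- is sound.
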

\begin{proof}
	The proof can be found in \cite[Proposition 19]{banagl2024link}
\end{proof}
\begin{remark}
	Given $\tau , \eta \in \mathcal{P}$ with $\dim(\tau)=\dim(\eta)$, 
	$\mathcal{L}_{\tau}$ and $\mathcal{L}_{\eta}$ are not necessarily even homotopy equivalent.
\end{remark}

\section{CW structure of links }

We employ our previous methods in \cite{banagl2024link} to compute the homology groups of the link of an isolated singularity, constructed as above, in an 8-dimensional toric variety. The underlying polytope of the link $\mathcal{X}$ is a 3-dimensional polytope, $\mathcal{M}$. We endow $\mathcal{M}$ with the regular CW structure induced by face relations. Let $p_{\mathcal{X}}: \mathcal{X} \longrightarrow \mathcal{M}$ be the natural projection. Observe that Diagram
\begin{align}
	\xymatrix{
		\mathcal{M} \times T^4 \ar[r] \ar[rd] & \mathcal{X} \ar[d]^{p_{\mathcal{X}}} \\ & \mathcal{M}
	}
\end{align}
commutes, where the map $\mathcal{M}\times T^4 \longrightarrow \mathcal{M}$ is the projection onto the first component, $\mathcal{M}$. Note that the decomposition $\mathcal{X}_{2i+1}= p^{-1}_{\mathcal{X}}(\mathcal{M}^{i})$ constitutes a stratification of $\mathcal{X}$. Let $T^2$ be the torus associated with a 1-dimensional face of $\mathcal{M}$, denoted by $\tau$. Since the polytope $\mathcal{M}$ is homeomorphic to a disc $D^3$, the face $\tau$ has two neighboring 2-dimensional faces, $\sigma_{1}$ and $\sigma_{2}$, with the tori $T^{3}_{1}$ and $T^{3}_{2}$ attached, respectively. Hence, we have the following commutative diagram:
\begin{align}\label{DIAT}
	\xymatrix{
		T^{4} \ar[d] \ar[r] & T^{3}_{1} \ar[d] \\
		T^{3}_{2} \ar[r] & T^{2}.
	} 
\end{align}
Each map in the diagram above is a collapsing map, where we extract the collapsing data from the 4-dimensional cone dual to the isolated point. We aim to endow $T^3_1$, $T^3_2$, and $T^2$ with CW structures such that all maps in the above diagram are cellular. Note that replacing the collapsing maps in $\mathcal{X}$ with homeomorphic cellular maps—while ensuring the above diagrams remain commutative for each 1-dimensional face of $\mathcal{M}$—does not alter the homeomorphism type of the resulting space. To be more precise, we aim to replace the collapsing maps in the link up to homeomorphism, so that the resulting space is homeomorphic to the link. Consider the quotient map $\pi: \mathbb{R}^{4} \longrightarrow \sfrac{\mathbb{R}^{4}}{\mathbb{Z}^{4}} \cong T^{4}$. We consider the minimal CW structure on the 4-dimensional torus:
\begin{align*}
	T^{4} \cong e^{4} \cup \big( \bigcup_{i=1}^{4} e^{3}_{z_{i}} \big) \cup \big( \bigcup_{i=1}^{6} e^{2}_{y_{i}} \big) \cup \big( \bigcup_{i=1}^{4} e^{1}_{x_{i}} \big) \cup e^{0}.
\end{align*}

 Let $T^{3}_{i} \cong \sfrac{T^{4}}{S^1_{i}}$, where $S^1_{i} \cong \pi (\begin{pmatrix}
 	n_{i} &
 	m_{i} &
 	l_{i} &
 	k_{i}
 \end{pmatrix}^{T}) $, $n_{i}$, $m_{i}$, $l_{i}$, and $k_{i}$ are all non-zero, and $\operatorname{gcd}(n_{i},m_{i},l_{i},k_{i})=1$ for $i=1,2$. We employ the machinery introduced in \cite[Section 4.2]{banagl2024link} to endow $T^3_1$ and $T^3_2$ with CW structures. The resulting CW structure on $T^3_{i}$ is
\begin{align*}
	T^{3}_{i} \cong \big( \bigcup_{j=1}^{3} e^{3}_{j} \big) \cup \big( \bigcup_{j=1}^{9} e^{2}_{j} \big) \cup \big( \bigcup_{j=1}^{7} e^{1}_{j} \big) \cup e^{0}
\end{align*}
 for $i=1,2$.
We illustrate the above CW structures in Figure \ref{T3CWFig}.
\begin{figure}[h!]
	\centering
	\includegraphics[width=.35\linewidth]{./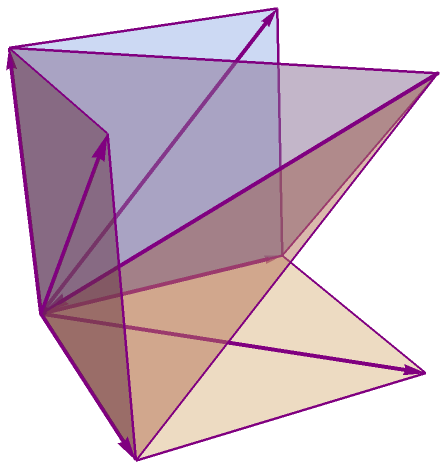}
	\includegraphics[width=.35\linewidth]{./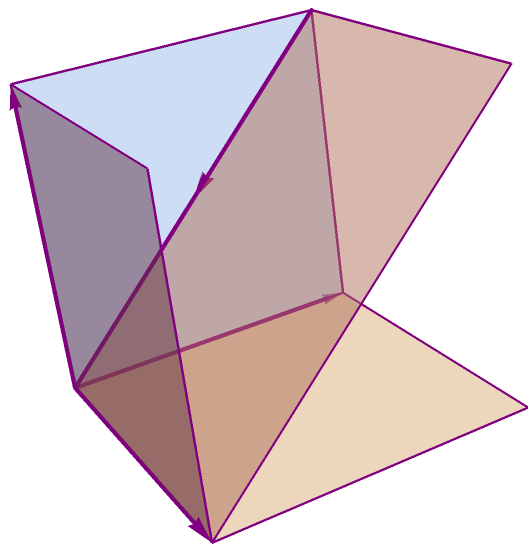}
	\caption{CW structures on $T^{3}$}
	\label{T3CWFig}
\end{figure}
One could also use the previous method to get a CW structure on $T^2$. The resulting CW structure on $T^2$ is rather complicated.

The main idea of this section is to define a homeomorphism $T^{2} \longrightarrow T^{2}$ such that the resulting CW structure on the target torus is simpler compared to the CW structure on the domain $T^{2}$, which is obtained using the method introduced in \cite[Section 4.2]{banagl2024link}. The construction proceeds as follows. Consider the projection map $ \operatorname{proj}_{i}: \mathbb{R}^{4} \longrightarrow \mathbb{R}^{3}_{i} $, where we project onto the hyperplane orthogonal to $ (n_{i}, m_{i}, l_{i}, k_{i})^{T} \in \mathbb{R}^{4} $. Let $ \overline{e^{1}_{T^{3}_{1_{\xi}}}} \cong S^{1}_{1_{\xi}} $ and $ \overline{e^{1}_{T^{3}_{2_{\xi}}}} \cong S^{1}_{2_{\xi}} $ be the closures of 1-cells of $ T^3_{1} $ and $ T^3_{2} $, respectively. Let \( \pi_{3_{i}}: \mathbb{R}^{3}_{i} \longrightarrow \sfrac{\mathbb{R}^{3}_{i}}{\mathbb{Z}^{3}} \cong T^{3}_{i} \). Consider the 1-dimensional subspaces \( \pi^{-1}_{3_i}(S^{1}_{i_{\alpha}}) \cong \mathbb{R}^{1}_{3i_{\alpha}} \subset \mathbb{R}^{3}_{i} \). Note that we have the projection maps $ \operatorname{proj}_{ij}: \mathbb{R}^{3}_{i} \longrightarrow \mathbb{R}^{2} $ for $ i=1,2 $, where we project onto the 2-dimensional subspace \( \mathbb{R}^{2} \subset \mathbb{R}^{3}_{i} \) orthogonal to \( v_{j} = \operatorname{proj}_{i}((n_{j}, m_{j}, l_{j}, k_{j})) \in \mathbb{R}^{3}_{i} \), such that \( i \neq j \). Consider the 1-dimensional subspaces \( \mathbb{R}_{i\alpha} = \operatorname{proj}_{ij}(\mathbb{R}^{1}_{3i_{\alpha}}) \subset \mathbb{R}^{2} \) for \( i=1,2 \). We choose \( \mathbb{R}_{i \alpha} \) and \( \mathbb{R}_{j \beta} \) such that the angle between the two subspaces is less than \( \pi \), with all other \( \mathbb{R}_{k \gamma} \) lying between them. Let \( R \) be the set of all such 1-dimensional subspaces. Note that under the map \( \pi_{2}: \mathbb{R}^{2} \longrightarrow \sfrac{\mathbb{R}^{2}}{\mathbb{Z}^{2}} \cong T^{2} \), each \( \mathbb{R}_{l \theta} \in R \) corresponds to an \( S^{1}_{l \theta} \). We equip \( T^{2} \) with a CW structure where each 1-cell lies on \( \pi_{2}(\mathbb{R}_{l \theta}) \) for \( \mathbb{R}_{l \theta} \in R \), and the intersection points \( \pi_{2}(\mathbb{R}_{l \theta}) \cap \pi_{2}(\mathbb{R}_{l^{\prime} \theta^{\prime}}) \) for \( l \neq l^{\prime} \) and \( \theta \neq \theta^{\prime} \) represent the 0-cells. This CW structure on $T^{2}$ makes the maps \( T^{4} \longrightarrow T^{2} \), \( T^{3}_{1} \longrightarrow T^{2} \), and \( T^{3}_{2} \longrightarrow T^{2} \) cellular. However, the resulting CW structure on \( T^{2} \) remains quite complex for our intended purposes. In the upcoming discussion, we aim to simplify the previous CW structure. Let \( \pi_{2}(\mathbb{R}_{i \alpha}) \cong S^{1}_{h} \) and \( \pi_{2}(\mathbb{R}_{j \beta}) \cong S^{1}_{v} \). The images of the remaining 1-dimensional subspaces \( \mathbb{R}_{k \gamma} \) are \( S^{1} \) that warp around the torus \( T^{2} \). Let \( e^{0}_{0} \) be the image of the origin under the map \( \pi_{2} \). In other words, let \( e^{0}_{0} \) be the 0-cell that has a non-empty intersection with all \( \pi_{2}(\mathbb{R}_{l \theta}) \). Let \( L_{h} \subset S^{1}_{h} \) be the line that connects \( e^{0}_{0} \) with one of the two next 0-cells that lie on \( S^{1}_{h} \). From the construction, one can deduce that the following procedure does not depend on the choice of the 0-cells in this step. We have \( \overline{S^{1}_{h} \setminus L_{h}} \cong I_{h} = [0,1] \). Consider the cylinder \( S^{1}_{v} \times \overline{S^{1}_{h} \setminus L_{h}} \subset T^{2} \). We define the map \( p_{h}: T^{2} \longrightarrow \sfrac{T^{2}}{\sim} \) such that \( S^{1}_{v} \times \overline{S^{1}_{h} \setminus L_{h}} \cong S^{1}_{v} \times I_{h} \sim S^{1}_{h} \times \{0\} \) and the identity elsewhere. In other words, we project the cylinder \( S^{1}_{v} \times \overline{S^{1}_{h} \setminus L_{h}} \) to one of the components of its boundary, \( S^{1}_{v} \times \{0\} \). Consequently, we arrive at \( \sfrac{T^{2}}{\sim} \cong (T^{\prime})^{2} \), a torus. Let \( S^{1}_{h^{\prime}} = p_{h}(S^{1}_{h}) \). Choose a 0-cell next to \( e^{0}_{0} \) that lies on \( S^{1}_{v} \subset (T^{\prime})^{2} \). Let \( L_{v} \subset S^{1}_{v} \) be the line that connects the previous 0-cells. Consider the cylinder \( S^{1}_{h^{\prime}} \times \overline{S^{1}_{v} \setminus L_{v}} \cong S^{1}_{h^{\prime}} \times I_{v} = S^{1}_{h^{\prime}} \times [0,1] \). We define the projection map \( p_{v}: (T^{\prime})^{2} \longrightarrow \sfrac{(T^{\prime})^{2}}{\sim} \) such that \( S^{1}_{h^{\prime}} \times \overline{S^{1}_{v} \setminus L_{v}} \cong S^{1}_{h^{\prime}} \times I_{v} \sim S^{1}_{h^{\prime}} \times \{0\} \) and the identity elsewhere. It follows that \( \sfrac{(T^{\prime})^{2}}{\sim} \cong (T^{\prime \prime})^{2} \), a torus. Let \( S^{1}_{v^{\prime}} = p_{v}(S^{1}_{v}) \). Choose the lines \( \widetilde{L}_{v} \subset S^{1}_{v^{\prime}} \) and \( \widetilde{L}_{h} \subset S^{1}_{h^{\prime}} \) as follows. The lines \( \widetilde{L}_{v} \subset S^{1}_{v^{\prime}} \) and \( \widetilde{L}_{h} \subset S^{1}_{h^{\prime}} \) connect \( e^{0}_{0} \) with the next 0-cell on \( S^{1}_{v^{\prime}} \) and \( S^{1}_{h^{\prime}} \), respectively. We define \( p_{L}:(T^{\prime \prime})^{2} \longrightarrow \sfrac{(T^{\prime \prime})^{2}}{\sim} \) such that \( \overline{S^{1}_{v^{\prime}} \setminus \widetilde{L}_{v}} \sim e^{0}_{0} \), \( \overline{S^{1}_{h^{\prime}} \setminus \widetilde{L}_{h}} \sim e^{0}_{0} \), and the identity elsewhere. From the construction, it follows that \( \sfrac{(T^{\prime \prime})^{2}}{\sim} \cong T^{2} \), a torus. The resulting CW structure on \( T^{2} \) has \( \# R \) 1-cells, \( (\# R-1) \) 2-cells, and a 0-cell. It follows that the maps \( T^{4} \longrightarrow T^{3}_{i} \longrightarrow T^{2} \xrightarrow{p_{L} \circ p_{v} \circ p_{h}} T^{2} \) are cellular for \( i=1,2 \). Thus, we have
\begin{align}\label{T2CW}
	T^{2} \cong \big( \bigcup_{i=1}^{\# R-1} e^{2}_{i} \big) \cup \big( \bigcup_{i=1}^{\# R} e^{1}_{i} \big) \cup e^{0}.
\end{align}
We illustrate the above procedures in Figure \ref{T2Modif}.

 \begin{remark}
 	
The above procedure is one of the cornerstones of this work. Therefore, in this remark, our goal is to provide a schematic elaboration for each step of the construction mentioned above. Consider Figure \ref{T2Modif}. 
\begin{figure}[H]
	\centering
	\includegraphics[width=.26\linewidth]{./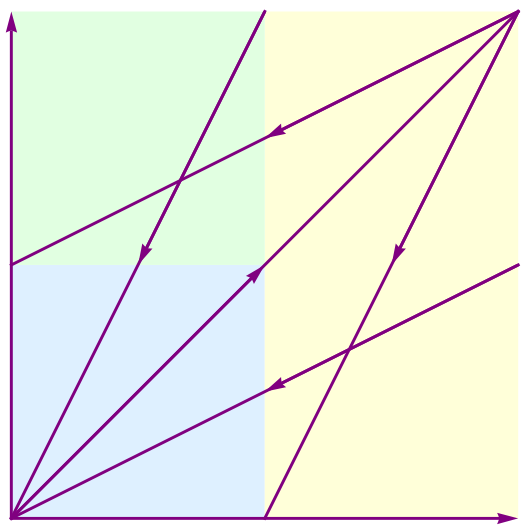}
	\includegraphics[width=.14\linewidth]{./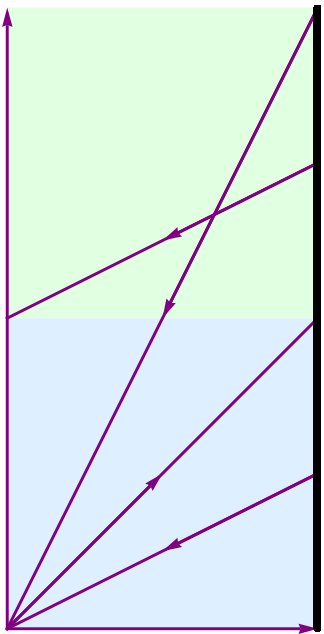}
	\includegraphics[width=.20\linewidth]{./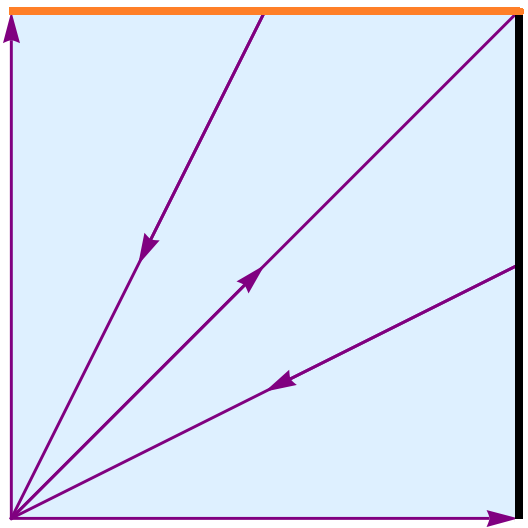}
	\includegraphics[width=.20\linewidth]{./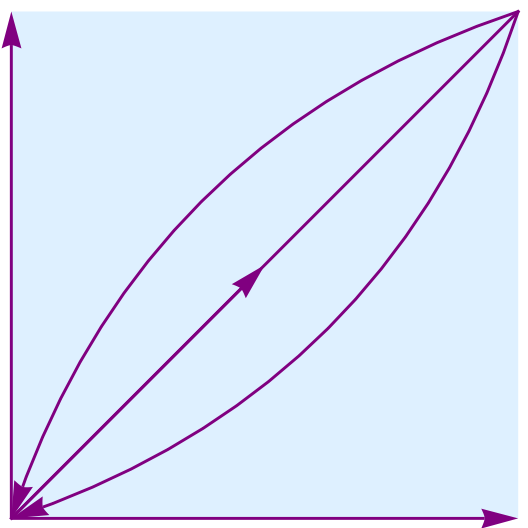}
	\caption{Schematic description of the construction of the map $T^2 \longrightarrow T^2$.}
	\label{T2Modif}
\end{figure}
The left illustration represents a CW structure on $T^2$, where we identify the opposite sides of the large square that contains the green square, the blue square, and the yellow rectangle. Each line represents an $S^1 \in T^2$. The yellow rectangle represents the cylinder $S^1_{v} \times \overline{S^{1}_{h} \setminus L_{h}}$. Under the projection $p_{h}$, the cylinder $S^1_{v} \times \overline{S^{1}_{h} \setminus L_{h}}$ maps to the circle denoted by the black line in the second illustration from the left. The green square in the same illustration represents the cylinder $S^{1}_{h^{\prime}} \times \overline{S^{1}_{v} \setminus L_{v}}$, which maps to the circle denoted by the orange line in the third illustration. In the last step, we map the upper half of the black line, $\widetilde{L}_{v}$, and the right half of the orange line, $\widetilde{L}_{h}$, to the top corner of the blue square, $e^{0}_{0}$.
\end{remark}

However, we should study the other possible CW structures on $T^3$. Let $T^3 \cong \sfrac{T^{4}}{S^{1}}$, where $S^{1}= \pi(v)$ for $v \in \mathbb{R}^{4}$ such that $v$ has one or two zero entries and the non-zero entries are pairwise relatively prime. The induced CW structure on $T^3$ is the following
\begin{align*}
	T^{3} \cong \big( \bigcup_{j=1}^{2} e^{3}_{j} \big) \cup \big( \bigcup_{j=1}^{5} e^{2}_{j} \big) \cup \big( \bigcup_{j=1}^{4} e^{1}_{j} \big) \cup e^{0}
\end{align*}
Schematically, we illustrate the previous CW structure in Figure \ref{T3CWFig}.
\begin{remark}\label{Alpha}
Compute the number of 1-cells, $\alpha_{i}$, needed for each $T^2_{i}$ to make the maps in Diagram \ref{DIAT} cellular. Let $\alpha=\operatorname{max} \{ \alpha_{i} \}$. We endow each $T^2_{i}$ with $\alpha$ 1-cells, $(\alpha - 1)$ 2-cells, and a 0-cell. Hence, we can make all the collapsing maps cellular. This consideration is made merely for the sake of simplicity in the following computation. We can also equip each $T^2_{i}$ with the minimal number of 1-cells required, namely $\alpha_{i}$. In this case, the discussion follows the same line as well. In Theorem \ref{MainTheo2}, we provide an algebraic description of the parameter $b_{2}$, which is of interest. Hence, the above consideration does not affect the main result of this work; it is merely an intermediate tool for the proof of the main theorems.
\end{remark}
 Let $\alpha$ be as described in the above remark. The above consideration encompasses all the required CW structures on $T^3$ and $T^2$ for our calculations. By simplifying the CW structure on each $T^2$ through the above procedure, the most general cellular chain groups for the link $\mathcal{X}$ is given by 

	\begin{align*}
	\mathcal{C}_{7} (\mathcal{X}) &= \mathbb{Q} \braket{e^{4}_{T^{4}} \times e^{3}_{\mathcal{M} } } \\
	\mathcal{C}_{6} (\mathcal{X}) &= \bigoplus_{i=1}^{4} \mathbb{Q} \braket{e^{3}_{T^{4}_{i} } \times e^{3}_{\mathcal{M} } } \\
	\mathcal{C}_{5} (\mathcal{X}) &= \bigoplus_{i=1}^{6} \mathbb{Q} \braket{e^{2}_{T^{4}_{i} } \times e^{3}_{\mathcal{M} } } \bigoplus_{j=1}^{\omega } \bigoplus_{i=1}^{3} \mathbb{Q} \braket{ e^{3}_{T^{3}_{\omega_{i}} } \times e^{2}_{\mathcal{M}_{j}}} \bigoplus_{j=1}^{\gamma } \bigoplus_{i=1}^{2} \mathbb{Q} \braket{ e^{3}_{T^{3}_{\gamma_{i}} } \times e^{2}_{\mathcal{M}_{j}} } \bigoplus_{j=1}^{\eta } \mathbb{Q} \braket{ e^{3}_{T^{3}_{\eta_{i}} } \times e^{2}_{\mathcal{M}_{j}}}  \\
	\mathcal{C}_{4} (\mathcal{X}) &= \bigoplus_{i=1}^{4} \mathbb{Q} \braket{e^{1}_{T^{4} } \times e^{3}_{\mathcal{M} } } \bigoplus_{j=1}^{ \omega} \bigoplus_{i=1}^{9}  \mathbb{Q} \braket{ e^{2}_{T^{3}_{\omega_{i} } } \times e^{2}_{\mathcal{M}_{j} } }  \bigoplus_{j=1}^{ \gamma} \bigoplus_{i=1}^{5} \mathbb{Q} \braket{ e^{2}_{T^{3}_{\gamma_{i} } } \times e^{2}_{\mathcal{M}_{j} } }  \bigoplus_{j=1}^{ \eta} \bigoplus_{i=1}^{3} \mathbb{Q} \braket{ e^{2}_{T^{3}_{\eta_{i} } } \times e^{2}_{\mathcal{M}_{j}} } \\
	\mathcal{C}_{3} (\mathcal{X}) &=   \mathbb{Q} \braket{ e^{0}_{T^{4} } \times e^{3}_{\mathcal{M} }  }  \bigoplus_{j=1}^{\omega} \bigoplus_{i=1}^{7}   \mathbb{Q} \braket{ e^{1}_{ T^{3}_{\omega_{i}  } } \times e^{2}_{\mathcal{M}_{j} }  } \bigoplus_{j=1}^{\gamma} \bigoplus_{i=1}^{4}  \mathbb{Q} \braket{ e^{1}_{ T^{3}_{\gamma_{i}  } } \times e^{2}_{\mathcal{M}_{j} }   } \bigoplus_{j=1}^{\eta} \bigoplus_{i=1}^{3}  \mathbb{Q} \braket{e^{1}_{ T^{3}_{\eta_{i}  } } \times e^{2}_{\mathcal{M}_{j} } } \\  &\phantom{=}  \bigoplus_{j=1}^{f_{2}} \bigoplus_{i=1}^{\alpha-1} \mathbb{Q} \braket{ e^{2}_{T^{2}_{i  }  } \times e^{1}_{\mathcal{M}_{j} }  } \\ 
	\mathcal{C}_{2} (\mathcal{X}) &= \bigoplus_{i=1}^{f_{1}} \mathbb{Q} \braket{e^{0}_{T^{3}} \times e^{2}_{\mathcal{M}_{i} }    } \bigoplus_{j=1}^{f_{2}} \bigoplus_{i=1}^{\alpha} \mathbb{Q} \braket{ e^{1}_{T^{2}_{ i  }  } \times e^{1}_{\mathcal{M}_{j}  } }  \\
	\mathcal{C}_{1} (\mathcal{X}) &= \bigoplus_{i=1}^{f_{2}} \mathbb{Q} \braket{ e^{0}_{T^{2}} \times e^{1}_{\mathcal{M}_{i}  }   }  \bigoplus_{j=1}^{f_{2} -f_{1}+2} \mathbb{Q} \braket{ e^{1}_{S^{1}  }  \times e^{0}_{\mathcal{M}_{j}  }  } \\
	\mathcal{C}_{0} (\mathcal{X}) &= \bigoplus_{i=1}^{f_{2}-f_{1}+2} \mathbb{Q} \braket{ e^{0}_{\mathcal{M}_{i}}  }, 
\end{align*}
where we labeled the tori $T^3$ with nine, five, and three 2-cells with $\omega$, $\gamma$, and $\eta$, respectively. Using the cellular chain groups above, we determine the boundary operators of the chain complex $\mathcal{C}_{\ast}(\mathcal{X})$ and subsequently compute the rational homology groups of the link $\mathcal{X}$. Computing $	\operatorname{rk} (H_{7}(\mathcal{X};\mathbb{Q})$ and $	\operatorname{rk} (H_{6}(\mathcal{X};\mathbb{Q})$ is straightforward, and we get
    \begin{align*}
	\operatorname{rk} (H_{7}(\mathcal{X};\mathbb{Q}) )&=1,\\
	\operatorname{rk}(H_{6}(\mathcal{X};\mathbb{Q}) )&=0.
	\end{align*}
   Before continuing our discussion, we look at the chain complexes on $T^3_{\omega}$ and $T^3_{\gamma}$.  
     \begin{align*}
     	\mathbb{Z}^{3} \xrightarrow{ \partial_{3}^{T^{3}_{\omega}}  } \mathbb{Z}^{9}  \xrightarrow{\partial_{2}^{T^{3}_{\omega}}} \mathbb{Z}^{7} \xrightarrow{\partial_{1}^{T^{3}_{\omega}}} \mathbb{Z} \\
     	\mathbb{Z}^{2} \xrightarrow{ \partial_{3}^{T^{3}_{\gamma}}  } \mathbb{Z}^{5}  \xrightarrow{\partial_{2}^{T^{3}_{\gamma}}} \mathbb{Z}^{4} \xrightarrow{ \partial_{1}^{T^{3}_{\gamma}}} \mathbb{Z},
     \end{align*}
   where $\operatorname{rk(ker}\partial_{3}^{T^{3}_{\omega}})=1$, $\operatorname{rk(im}\partial_{3}^{T^{3}_{\omega}})=2$, $\operatorname{rk(ker}\partial_{2}^{T^{3}_{\omega}})=5$, $\operatorname{rk(im}\partial_{2}^{T^{3}_{\omega}})=4$ , $\operatorname{rk(ker}\partial_{3}^{T^{3}_{\gamma}})=7$, $\operatorname{rk(im}\partial_{3}^{T^{3}_{\gamma}})=0$, $\operatorname{rk(ker}\partial_{2}^{T^{3}_{\gamma}})=4$, and $\operatorname{rk(im}\partial_{2}^{T^{3}_{\gamma}})=1$. In the following discussion, we will encode the collapsing data in matrices $\mathsf{A}_{i}$, $\mathsf{B}_{i}$, $\mathsf{C}_{i}$, $\mathsf{D}_{i}$, and $\mathsf{E}_{i}$. Since we are working with rational coefficients, we do not need the explicit form of these matrices. Hence, we arrive at the following boundary operators.
    \begin{tiny}
	\begin{align*}
		\partial_{5}=\kbordermatrix{  & e^{2}_{T^{4}_{1} \times e^{3}_{ \mathcal{M} } } &  e^{2}_{T^{4}_{2} \times e^{3}_{ \mathcal{M} } }  &  e^{2}_{T^{4}_{3} \times e^{3}_{ \mathcal{M} } }  &  e^{2}_{T^{4}_{4} \times e^{3}_{ \mathcal{M} } }  &  e^{2}_{T^{4}_{5} \times e^{3}_{ \mathcal{M} } }  &  e^{2}_{T^{4}_{6} \times e^{3}_{ \mathcal{M} } }  & e^{3}_{T^{3}_{ \omega_{j}}} \times e^{2}_{ \mathcal{M}_{\omega} } & e^{3}_{T^{3}_{ \gamma_{j}}} \times e^{2}_{ \mathcal{M}_{\gamma} } & e^{3}_{T^{3}_{ \eta_{j}}} \times e^{2}_{ \mathcal{M}_{\eta} } \\
	e^{1}_{T^{4}_{1} } \times e^{3}_{\mathcal{M}} \phantom{\;\;  f_{1} \Big\{ }	& 0 & 0 & 0 & 0 & 0 & 0 &  \overbrace{0_{1 \times 3} }^{ \omega} & \overbrace{0_{1 \times 2}}^{ \gamma} & \overbrace{0}^{ \eta } \\ 
e^{1}_{T^{4}_{2} } \times e^{3}_{\mathcal{M}} \phantom{\;\;  f_{1} \Big\{ }	 	& 0 & 0  & 0 & 0 & 0 & 0 & 0_{1 \times 3} & 0_{1 \times 2} & 0 \\ 
   e^{1}_{T^{4}_{3} } \times e^{3}_{\mathcal{M}}  \phantom{\;\;  f_{1} \Big\{ }  	& 0 & 0 & 0 & 0 & 0 & 0 & 0_{1 \times 3} & 0_{1 \times 2} & 0 \\ 
    e^{1}_{T^{4}_{4} } \times e^{3}_{\mathcal{M}} \phantom{\;\;  f_{1} \Big\{ }   & 0 & 0 & 0 & 0 & 0 & 0 & 0_{1 \times 3} & 0_{1 \times 2} & 0 \\
     e^{2}_{T^{3}_{ \omega_{j}}} \times e^{2}_{ \mathcal{M}_{\omega} } \; \;  \omega \Big\{  & (\mathsf{A}_{1})_{9 \times 1} & (\mathsf{A}_{2})_{9 \times 1} & (\mathsf{A}_{3})_{9 \times 1} & (\mathsf{A}_{4})_{9 \times 1} & (\mathsf{A}_{5})_{9 \times 1} & (\mathsf{A}_{6})_{9 \times 1} & \partial^{T^{3}_{ \omega_{j} }}_{3} & 0_{9 \times 2} & 0_{9 \times 1} \\
       e^{2}_{T^{3}_{ \gamma_{j}}} \times e^{2}_{ \mathcal{M}_{\gamma} } \; \;  \gamma \Big\{ & (\mathsf{A}_{7})_{5 \times 1}  & (\mathsf{A}_{8})_{5 \times 1} & (\mathsf{A}_{9})_{5 \times 1} & (\mathsf{A}_{10})_{5 \times 1} & (\mathsf{A}_{11})_{5 \times 1} & (\mathsf{A}_{12})_{5 \times 1} & 0_{3 \times 5} & \partial^{ T^{3}_{ \gamma_{j} } }_{3} & 0_{5 \times 1} \\
       e^{2}_{T^{3}_{ \eta_{j}}} \times e^{2}_{ \mathcal{M}_{\eta} } \; \;  \eta \Big\{ & (\mathsf{A}_{13})_{3 \times 1}  & (\mathsf{A}_{14})_{3 \times 1} & (\mathsf{A}_{15})_{3 \times 1} & (\mathsf{A}_{16})_{3 \times 1} & (\mathsf{A}_{17})_{3 \times 1} & (\mathsf{A}_{18})_{3 \times 1} & 0_{3 \times 3} & 0_{3 \times 2} & 0_{3 \times 1}\\ }.
	\end{align*}	
\end{tiny}
      Considering the boundary operators of $T^{3}_{\omega}$ and $T^{3}_{\gamma}$, it follows that
      \begin{align*}
      	\operatorname{rk(ker}(\partial_{5})) = \gamma + \omega + \eta = f_{1}, \, \text{and} \, \operatorname{rk(im}(\partial_{5}))= 6 + \gamma + 2 \omega. 
      \end{align*}
  A straightforward computation shows that $\operatorname{rk(im}\partial_{6})=4$. Hence, we get
  \begin{align*}
  	\operatorname{rk}(H_{5}(\mathcal{X}; \mathbb{Q}))=f_{1}-4.
  \end{align*}
  Accordingly, we arrive at the following boundary operators:
	  \begin{align*}
	  \partial_{4} = \kbordermatrix{
		& e^{1}_{T^{4}_{1}} \times e^{3}_{\mathcal{M}}    & e^{1}_{T^{4}_{2}} \times e^{3}_{\mathcal{M}} & 	e^{1}_{T^{4}_{3}} \times e^{3}_{\mathcal{M}} & e^{1}_{T^{4}_{4}} \times e^{3}_{\mathcal{M}} &  e^{2}_{T^{3}_{ \omega_{j}}} \times e^{2}_{ \mathcal{M}_{\omega} }  &  e^{2}_{T^{3}_{ \gamma_{j}}} \times e^{2}_{ \mathcal{M}_{\gamma} } &  e^{2}_{T^{3}_{ \eta_{j}}} \times e^{2}_{ \mathcal{M}_{\eta} }  \\
		e^{0}_{T^{4}} \times e^{3}_{\mathcal{M}} \phantom{\;\;  f_{1} \Big\{ } & 0  & 0  & 0   & 0  & \overbrace{0_{1 \times 9 }  }^{ \omega}  & \overbrace{ 0_{1 \times 5 }  }^{   \gamma }  & \overbrace{ 0_{ 1 \times 3  }  }^{  \eta}    \\
		e^{1}_{ T^{3}_{ \omega_{j}   } } \times e^{2}_{\mathcal{M}_{ \omega  } } \;\;   \omega \Big\{   & (\mathsf{B}_{1})_{ 7 \times 1 }   &  (\mathsf{B}_{2})_{ 7 \times 1 }   & (\mathsf{B}_{3})_{ 7 \times 1 }   & (\mathsf{B}_{4})_{ 7 \times 1 }  & \partial_{2}^{T^{3}_{\omega_{j} } }  & 0_{4 \times 5 }  & 0_{4 \times 5 }    \\
		e^{1}_{ T^{3}_{ \gamma_{j}    } } \times e^{2}_{\mathcal{M}_{  \gamma  } } \;\;   \gamma \Big\{   & (\mathsf{B}_{5})_{ 4 \times 1 }   & (\mathsf{B}_{6})_{ 4 \times 1 }  & (\mathsf{B}_{7})_{ 4 \times 1 }   & (\mathsf{B}_{8})_{ 4 \times 1 }   & 0_{4 \times 6}   & \partial_{2}^{T^{3}_{\gamma_{j} } }   & 0_{4 \times 3}    \\
		e^{1}_{ T^{3}_{ \eta_{ j}    } } \times e^{2}_{\mathcal{M}_{\eta} } \;\;   \eta \Big\{ &  (\mathsf{B}_{9})_{ 3 \times 1 } & (\mathsf{B}_{10})_{ 3 \times 1 }  &  (\mathsf{B}_{11})_{ 3 \times 1 }  & (\mathsf{B}_{12})_{ 3 \times 1 }  &  0_{ 3 \times 6} & 0_{3 \times 5}  & 0_{3 \times 3 }    \\
		e^{2}_{T^{2}_{i } } \times e^{1}_{\mathcal{M}_{j} } \;\;  f_{2}  \Big\{ & 0_{(\alpha -1) \times 1}  & 0_{(\alpha -1) \times 1}  &  0_{(\alpha -1) \times 1}  & 0_{(\alpha -1) \times 1}  & (\mathsf{C}_{1})_{ (\alpha -1) \times 6 }  & (\mathsf{C}_{2})_{ (\alpha -1) \times 5 }  & (\mathsf{C}_{3})_{ (\alpha -1) \times 3 }     \\}
	\end{align*}
    and
	\begin{align*}
	\partial_{3} = \kbordermatrix{
		& e^{0}_{\mathcal{T }^{4}} \times e^{3}_{\mathcal{M}}  & e^{1}_{T^{3}_{1}} \times e^{2}_{\mathcal{M}_{i} }  & 	e^{1}_{T^{3}_{2}} \times e^{2}_{\mathcal{M}_{i} } & e^{1}_{T^{3}_{3}} \times e^{2}_{\mathcal{M}_{i} } & e^{2}_{T^{2}_{ i  }  } \times e^{1}_{\mathcal{M}_{ j }} &  \\
		e^{0}_{T^{3}} \times e^{2}_{\mathcal{M}_{k} } \;\; f_{1} \Big\{ &  \partial^{\mathcal{M}}_{3} & \overbrace{0_{1 \times 7}  }^{  \omega}  & \overbrace{0_{1 \times 4} }^{  \gamma}  &  \overbrace{0_{1 \times 3} }^{  \eta } &  \overbrace{0_{1 \times (\alpha-1)} }^{  f_{2} }  &  \\
		e^{1}_{T^{2}_{ i  } } \times e^{1}_{\mathcal{M}_{j} } \;\;  f_{2} \Big\{ & 0_{ \alpha \times 1 }  & (\mathsf{D}_{1})_{ \alpha \times 7 }  & (\mathsf{D}_{2})_{ \alpha \times 4 }  & (\mathsf{D}_{3})_{ \alpha \times 3 }  &  \partial^{ T^{2}_{j }}_{2}  &  \\}.
	\end{align*}
    Note that $\partial_{3} \circ \partial_{4}=0$. Then, for a given matrix $C_{i}$, where $i=1, 2 , 3$, the rows of the matrix $C_{i}$ are identical up to multiplication by -1. Using $\operatorname{rk(im}\partial_{2}^{T^{3}_{\omega}})=4$ and $\operatorname{rk(im}\partial_{2}^{T^{3}_{\gamma}})=1$, we arrive at $\operatorname{rk(im}(\partial_{4} ))= \gamma+ 4\omega + f_{2} + b_{1} + 4$. Consequently, we get 
    \begin{align}\label{bp1}
    \operatorname{rk(ker}(\partial_{4} ))= 5 \omega + 4 \gamma +3 \eta - (f_{2}+b_{1})=  5 \omega+4 \gamma +3 \eta -f_{2}-b_{1},
    \end{align}
    where $b_{1}$ determines the number of rows in $\partial_{4}$ which are linearly dependent. As a result, we arrive at
    \begin{align*}
    \operatorname{rk}(H_{4}(\mathcal{X}; \mathbb{Q}))=	3f_{1}-f_{2}-6-b_{1}.
    \end{align*}
    By considering the introduced CW complexes in Equality (\ref{T2CW}), we get $\operatorname{rk(ker(} \partial_{3}))= 4 \gamma + 7 \omega+ 3 \eta - ( f_{2}  + b^{ \prime})$ and $\operatorname{rk(im(} \partial_{3}))= \alpha f_{2} + b^{\prime}$, where $b^{\prime}$ determines the number of rows in $\partial_{3}$ which are linearly dependent. Finally, we determine the boundary operator $\partial_{2}$.
	\begin{align*}
	\partial_{2} = \kbordermatrix{ & e^{0}_{T^{3}} \times e^{2}_{\mathcal{M}_{l}}  & e^{1}_{T^{2}_{i } } \times e^{1}_{\mathcal{M}_{ j } }  \\
		e^{0}_{T^{2}} \times e^{1}_{\mathcal{M}_{j}} \;\; \phantom{-f_{1}+2} f_{2}  \Big\{ & \overbrace{\partial_{2}^{\mathcal{M}} }^{f_{1}}  & \overbrace{0_{1 \times \alpha} }^{ f_{2}}  \\
		e^{1}_{\mathcal{S}^{1} } \times e^{0}_{\mathcal{M}_{k} } \;\; f_{2}-f_{1}+2 \Big\{ & 0 & (\mathsf{E}_{1})_{1 \times \alpha} \\}
	\end{align*}
	It follows that $\operatorname{rk(ker(} \partial_{2}))=(\alpha f_{2})-(f_{2} - f_{1} +2)$.
    As a consequence, we arrive at 
    $\operatorname{rk}(H_{2}(\mathcal{X}; \mathbb{Q})) = (\alpha f_{2} ) - (f_{2} - f_{1} +2) - (\alpha f_{2} + b^{\prime} ) = f_{1}-f_{2}-2 -b^{\prime}.$
	From Poincar\'e duality, it follows that for a non-singular link $\operatorname{rk}(H_{2}(\mathcal{X};\mathbb{Q}))=\operatorname{rk}(H_{5}(\mathcal{X};\mathbb{Q}))=f_{1}-4$. Motivated by this observation, we set
	$
	\operatorname{rk}(H_{2}(\mathcal{X};\mathbb{Q}))=f_{1}-4-b_{2},$
	where $b_{2}=0$ for non-singular links. It yields
	\begin{align}\label{bp2}
	-b^{\prime}=f_{2}-2-b_{2}.
	\end{align}
    As a result, we arrive at $
    	\operatorname{rk}(H_{3}(\mathcal{X};\mathbb{Q}))=3 f_{1}-f_{2}-6-(b_{1}+b_{2}).$
At this point, we can present the main result of this section. 
\begin{proposition}\label{Hom1Ver}
	Let $\mathcal{X}$ be the link of an isolated point in a compact 8-dimensional toric variety. Then, we have
	\begin{align*}
	\operatorname{rk}(H_{7}(\mathcal{X};\mathbb{Q}) )&=1,\;
	\operatorname{rk}(H_{6}(\mathcal{X};\mathbb{Q}) )=0,\;
	\operatorname{rk}(H_{5}(\mathcal{X};\mathbb{Q}) )=f_{1}-4,\;
	\operatorname{rk(H_{4}(\mathcal{X};\mathbb{Q}) )}=3f_{1}-f_{2}-6-b_{1},\\
	\operatorname{rk}(H_{3}(\mathcal{X};\mathbb{Q}) )&=3f_{1}-f_{2}-6-(b_{1}+b_{2}),\;
	\operatorname{rk}(H_{2}(\mathcal{X};\mathbb{Q}))=f_{1}-4-b_{2},\;
	\operatorname{rk}(H_{1}(\mathcal{X};\mathbb{Q}) )=0,\\
	\operatorname{rk}(H_{0}(\mathcal{X};\mathbb{Q}) )&=1,
	\end{align*}
	where the parameters $b_{1}$ and $b_{2}$ are defined in Equations (\ref{bp1}) and (\ref{bp2}), respectively.
\end{proposition}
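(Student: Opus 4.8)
The plan is to assemble the homology computation from the explicit cellular chain complex $\mathcal{C}_\ast(\mathcal{X})$ built in this section, by computing the ranks of all boundary operators $\partial_i$ degree by degree, starting from the top. The cellular structure is completely explicit: $\mathcal{C}_7(\mathcal{X})$ is one-dimensional, $\mathcal{C}_6(\mathcal{X})$ is four-dimensional, and so on, with the collapsing data encoded in the matrices $\mathsf{A}_i,\mathsf{B}_i,\mathsf{C}_i,\mathsf{D}_i,\mathsf{E}_i$ and the torus boundary maps $\partial_\bullet^{T^3_\omega}$, $\partial_\bullet^{T^3_\gamma}$, $\partial_\bullet^{T^3_\eta}$, $\partial_\bullet^{T^2}$, $\partial_\bullet^{\mathcal{M}}$. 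Since we work over $\mathbb{Q}$, only ranks matter, and $\operatorname{rk} H_i(\mathcal{X};\mathbb{Q}) = \dim \mathcal{C}_i - \operatorname{rk}\partial_i - \operatorname{rk}\partial_{i+1}$.

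First I would dispose of the easy degrees: $\partial_7$ is injective with rank $1$ (the link is a connected orientable pseudomanifold, so the top cell is not a boundary and maps onto nothing), giving $\operatorname{rk} H_7 = 1$ and, together with $\operatorname{rk}\partial_6 = 3$ (computable directly from the $T^4$ chain complex since $e^3_{\mathcal{M}}$ is not involved in collapsing), $\operatorname{rk} H_6 = 0$. Then for $H_5$: using the displayed $\partial_5$, the block-triangular structure with the known torus data $\operatorname{rk}(\operatorname{im}\partial_3^{T^3_\omega})$, $\operatorname{rk}(\operatorname{ker}\partial_3^{T^3_\omega})$, etc., gives $\operatorname{rk}(\ker\partial_5) = \omega+\gamma+\eta = f_1$ and $\operatorname{rk}(\operatorname{im}\partial_5) = 6 + \gamma + 2\omega$; combined with $\operatorname{rk}\partial_6 = 4$ (careful recount including the $T^3$-cells), this yields $\operatorname{rk} H_5 = f_1 - 4$. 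Next, analyzing $\partial_4$ and $\partial_3$ as displayed, using $\partial_3\circ\partial_4 = 0$ to force the rows of each $\mathsf{C}_i$ to be proportional, I would extract $\operatorname{rk}(\operatorname{im}\partial_4) = \gamma + 4\omega + f_2 + b_1 + 4$ where $b_1$ counts the linearly dependent rows of $\partial_4$; this is Equation (\ref{bp1}) and gives $\operatorname{rk} H_4 = 3f_1 - f_2 - 6 - b_1$. Similarly $\operatorname{rk}(\operatorname{im}\partial_3) = \alpha f_2 + b'$ with $b'$ the number of dependent rows of $\partial_3$.

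For the bottom degrees, I would compute $\partial_2$ from its display: it has rank $(\alpha f_2) - (f_2 - f_1 + 2)$ on its kernel side, i.e. $\operatorname{rk}(\ker\partial_2) = \alpha f_2 - (f_2 - f_1 + 2)$, yielding $\operatorname{rk} H_2 = f_1 - f_2 - 2 - b'$. Here comes the key renormalization step: Poincaré duality for the regular (non-singular) case forces $\operatorname{rk} H_2 = \operatorname{rk} H_5 = f_1 - 4$, which identifies $b' = -(f_2 - 2 - b_2)$, i.e. Equation (\ref{bp2}), where $b_2$ is by definition the defect from Poincaré duality and vanishes for non-singular links. Substituting back into $\operatorname{rk} H_3 = \dim\mathcal{C}_3 - \operatorname{rk}\partial_3 - \operatorname{rk}\partial_4$ gives $\operatorname{rk} H_3 = 3f_1 - f_2 - 6 - (b_1 + b_2)$, and $\operatorname{rk} H_1 = 0$, $\operatorname{rk} H_0 = 1$ follow from connectedness of $\mathcal{X}$ and the explicit form of $\partial_1,\partial_2$.

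The main obstacle is bookkeeping the ranks of the block matrices $\partial_4$ and $\partial_3$ correctly: one must verify that the torus sub-blocks $\partial_2^{T^3}$ contribute their known ranks \emph{independently} of the collapsing blocks $\mathsf{B}_i,\mathsf{C}_i,\mathsf{D}_i$ (so that the total rank is the sum of the block rank and the number $b_1$, resp. $b'$, of extra dependent rows coming from the interaction between the $\mathsf{A}/\mathsf{B}$ columns and the $e^1_{T^4}$ columns), and that no spurious cancellations occur — this is exactly where the parameters $b_1$ and $b'$ are \emph{defined} rather than evaluated, so the proof must be phrased so that these are genuinely well-defined invariants of the rank computation. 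A secondary subtlety is ensuring the count $f_0 = f_2 - f_1 + 2$ for the number of vertices of the $3$-polytope $\mathcal{M}$ (Euler's formula) is invoked cleanly in the $\mathcal{C}_1,\mathcal{C}_0$ terms, and that the edge-torus contributions $e^1_{S^1}\times e^0_{\mathcal{M}_j}$ are accounted for in $\operatorname{rk}\partial_2$ without double-counting.
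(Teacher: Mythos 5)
Your proposal follows the paper's argument essentially verbatim: build the explicit cellular chain complex, extract the ranks of $\partial_i$ degree by degree using the torus subcomplex data, let $b_1$ and $b'$ record the rank defects coming from the interaction of the collapse blocks, and then renormalize $b'$ to $b_2$ by matching $\operatorname{rk}H_2$ against the Poincar\'e dual answer $f_1-4$ in the non-singular case. The heavy-lifting points you flag (the block structure of $\partial_4,\partial_3$, the fact that $\partial_3\circ\partial_4=0$ constrains the $\mathsf{C}_i$ rows, Euler's formula $f_0 = f_2-f_1+2$ for $\mathcal{M}$) are exactly the ones the paper relies on.

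Two small slips in the top degrees deserve correction, though they do not affect the method. First, $\partial_7$ has rank $0$, not $1$, and is certainly not injective: $\partial_4^{T^4}=0$ in the minimal CW structure, and the $\mathcal{M}$-boundary contribution is degenerate under the collapse, so $\partial_7=0$; this is what gives $\operatorname{rk}H_7=1$ from the single $7$-cell. Second, you first claim $\operatorname{rk}\partial_6=3$ ``directly from the $T^4$ chain complex,'' but $\partial_3^{T^4}=0$ as well, so no rank comes from the $T^4$ data alone --- the entire rank of $\partial_6$ comes from the collapse to the $T^3$'s over the $2$-faces of $\mathcal{M}$, and the correct value is $4$ (your later ``careful recount'' is right, the earlier figure is not). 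With those corrected, $\operatorname{rk}H_6=4-4-0=0$ as required, and the rest of the computation matches the paper.
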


\begin{remark}\label{b1zero}
	Later on, by using the theory of intersection spaces, we will show $b_{1}=0$.
\end{remark}

\begin{corollary}
	The link of an isolated point in a compact 8-dimensional toric variety, $\mathcal{X}$, is a rational homology sphere if and only if $f_{1}=4$ and $f_{2}=6$. 
\end{corollary}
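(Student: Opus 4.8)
\emph{Proof strategy.} The corollary is a bookkeeping consequence of Proposition \ref{Hom1Ver}, and the plan is to extract it using only non-negativity of Betti numbers together with one elementary combinatorial fact about $3$-polytopes. Since $\mathcal{X}$ is a compact connected $7$-dimensional pseudomanifold, it is a rational homology $7$-sphere precisely when $\operatorname{rk}(H_{i}(\mathcal{X};\mathbb{Q}))=0$ for $1\le i\le 6$. By Proposition \ref{Hom1Ver} the degrees $i=0,1,6,7$ hold unconditionally, so the condition reduces to the simultaneous vanishing of the four quantities
\[
f_{1}-4,\qquad 3f_{1}-f_{2}-6-b_{1},\qquad 3f_{1}-f_{2}-6-(b_{1}+b_{2}),\qquad f_{1}-4-b_{2},
\]
which are, respectively, $\operatorname{rk}(H_{5})$, $\operatorname{rk}(H_{4})$, $\operatorname{rk}(H_{3})$, $\operatorname{rk}(H_{2})$, and where $b_{1}\ge 0$ (a count of linearly dependent rows in $\partial_{4}$) and $b_{2}\ge 0$ (by its description following Equation (\ref{bp2}), equivalently as the rank appearing in the first main theorem of the introduction).

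For the forward implication I would argue: if $\mathcal{X}$ is a rational homology sphere then $\operatorname{rk}(H_{5})=f_{1}-4=0$ gives $f_{1}=4$; substituting into $\operatorname{rk}(H_{2})=f_{1}-4-b_{2}=0$ gives $b_{2}=0$; and $\operatorname{rk}(H_{4})=6-f_{2}-b_{1}=0$ gives $f_{2}=6-b_{1}\le 6$. Here the single external input enters: the underlying $3$-dimensional polytope $\mathcal{M}$ has at least six edges, i.e.\ $f_{2}\ge 6$. This follows from Euler's relation $f_{0}-f_{2}+f_{1}=2$, where $f_{0}=f_{2}-f_{1}+2$ is the number of vertices of $\mathcal{M}$ as read off from $\mathcal{C}_{0}(\mathcal{X})$, together with $2f_{2}\ge 3f_{1}$ and $2f_{2}\ge 3f_{0}$, which give $4f_{2}\ge 3(f_{1}+f_{0})=3(f_{2}+2)$. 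Hence $f_{2}=6$ (and incidentally $b_{1}=0$). For the reverse implication, assume $f_{1}=4$ and $f_{2}=6$: then $\operatorname{rk}(H_{5})=0$ immediately, while $\operatorname{rk}(H_{4})=-b_{1}\ge 0$ with $b_{1}\ge 0$ forces $b_{1}=0$ and $\operatorname{rk}(H_{2})=-b_{2}\ge 0$ with $b_{2}\ge 0$ forces $b_{2}=0$, whence $\operatorname{rk}(H_{3})=-(b_{1}+b_{2})=0$ as well; thus $\mathcal{X}$ has the rational homology of $S^{7}$.

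I do not expect any serious obstacle: the corollary is pure bookkeeping on top of Proposition \ref{Hom1Ver}. The only points requiring care are (i) invoking the combinatorial bound $f_{2}\ge 6$ correctly, and (ii) resisting the temptation to use $b_{1}=0$ from Remark \ref{b1zero}, which is established only later via intersection spaces — instead, non-negativity of the Betti numbers in Proposition \ref{Hom1Ver}, together with $b_{1},b_{2}\ge 0$, already suffices.
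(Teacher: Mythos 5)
Your proposal takes a genuinely different route from the paper's proof, and comparing them is instructive. The paper argues the "if" direction geometrically: $f_1=4$, $f_2=6$ forces the underlying polytope $\mathcal{M}$ to be a tetrahedron, whence each connected component of $\mathcal{X}_1$ has a triangle as the underlying polytope of its link, so all singularities are rationally regular, $\mathcal{X}$ satisfies rational Poincar\'e duality, and one reads off $b_1=b_2=0$ from the formulas in Proposition~\ref{Hom1Ver}. The paper never spells out the "only if" direction, while you do, via Euler's relation and the standard bounds $2f_2 \ge 3f_1$, $2f_2 \ge 3f_0$ that force $f_2\ge 6$; that is a genuine and welcome addition, and the combinatorics is correct given the paper's conventions ($f_1$ counts facets, $f_2$ edges, $f_0 = f_2-f_1+2$ vertices).

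However, your "if" direction has a gap. You need $b_2\ge 0$ to deduce $b_2=0$ from $\operatorname{rk}(H_2)=-b_2\ge 0$, and you justify this by ``its description following Equation~(\ref{bp2})'' and ``the rank appearing in the first main theorem of the introduction.'' The first justification does not hold: the text following Equation~(\ref{bp2}) only sets the convention $b_2=0$ for non-singular links and gives no sign information in general ($b_2$ is introduced there purely as a correction term, and $b^\prime$ in that relation can in fact be negative). The second justification is correct mathematically but is a forward reference: the identity $b_2=\operatorname{rk}(\operatorname{ker}(H_3(\partial M)\to H_3(M)))$ is only established in Lemma~\ref{b2alg} after the intersection-space machinery of Section~4, i.e.\ after the corollary appears. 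This is precisely the kind of reliance you explicitly set out to avoid in your point (ii) for $b_1$, so your treatment of $b_2$ is internally inconsistent with your own stated plan. The paper's geometric route (tetrahedron $\Rightarrow$ rationally regular $\Rightarrow$ Poincar\'e duality $\Rightarrow$ $b_1=b_2=0$) is what actually makes the "if" direction self-contained at this point in the text; either import that argument for the reverse implication, or acknowledge openly that you are invoking Lemma~\ref{b2alg}.
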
\label{HomSphere}
\begin{proof}
Using the stratification introduced in \cite{banagl2024link}, we obtain the following stratification on \( \mathcal{X} \):
\begin{align*}
	\mathcal{X} = \mathcal{X}_{7} \supset \mathcal{X}_{3} \supset \mathcal{X}_{1},
\end{align*}
where 
\[
\mathcal{X}_{1} \cong \bigsqcup_{i=1}^{f_{2}-f_{1}+2} S^{1}.
\]
To show that the link bundle of $ \mathcal{X} $, considered as a pseudomanifold, is trivial, we follow a similar line of reasoning as in \cite{banagl2024link}. Accordingly, one can easily demonstrate that the link of a connected component of $ \mathcal{X}_{1} $ has the same CW structure as the link of an isolated singularity in a 6-dimensional toric variety. More precisely, this means that the underlying polytope of the link of a point $x \in S^{1} \subset \mathcal{X}_{1}$ is 2-dimensional. To its interior, we attach a $T^{3}$, while to each of its 1-dimensional and 0-dimensional faces, we attach a $T^{2}$ and an $S^{1}$, respectively. The collapsing data are derived from the cone corresponding to the isolated point in the real 8-dimensional toric variety for which $\mathcal{X}$ serves as the link of this point. This allows us to easily determine whether a connected component of $\mathcal{X}_{1}$ is, at least rationally, singular or regular. If $f_{1} = 4$ and $f_{2} = 6$, then the underlying polytope of the link is a tetrahedron. Hence, all points $x \in \mathcal{X}_{1}$ are rationally regular, and the link $\mathcal{X}$ satisfies Poincaré duality rationally. It follows that $b_{1} = 0$ and $b_{2} = 0$.
	
\end{proof}

\section{Intersection space of links}

In \cite{banagl2010intersection}, Banagl introduces the theory of intersection spaces for compact stratified pseudomanifolds with two strata. In \cite{banagl2024link}, we generalized the theory of intersection spaces to $\mathbb{Q}$-pseudomanifolds with isolated singularities. In this section, we aim to expand the theory of intersection spaces to $\mathbb{Q}$-pseudomanifolds with two strata. After some preparations, we show that the main ingredients of the proof of Theorem 2.47 in \cite[Section 2.9]{banagl2010intersection} remain intact in the case of $\mathbb{Q}$-pseudomanifolds with two strata. We then take the links of isolated singularities in an 8-dimensional toric variety as our objects of study and apply the generalized theory of intersection spaces for $\mathbb{Q}$-pseudomanifolds with two strata to them. As a result, we find that the parameter $b_{1}$ introduced in Equality (\ref{bp1}) equals zero, as mentioned in Remark \ref{b1zero}. We start with the definition of $\mathbb{Q}$-pseudomanifolds.

\begin{definition}[$\mathbb{Q}$-pseudomanifold]
	We call a topological space $X$ with filtration
	\begin{align}\label{Qstra}
		X \supset X_{i} \supset  X_{i-1} \supset \dots \supset X_{0} \supset X_{-1}= \emptyset
	\end{align}
	a $\mathbb{Q}$-\textbf{pseudomanifold} if 
	(\ref{Qstra}) can be augmented to a filtration of the form
	\begin{align*}
		X \supset X_{i+k}  \supset \dots \supset X_{i+1} \supset  X_{i} \supset \dots \supset X_{0} \supset X_{-1}= \emptyset,
	\end{align*} 
	such that $X$ is a pseudomanifold with respect to it and the 
	link of each connected component of $X_{i+(j+1)}-X_{i+j}$ for 
	$j=0, \dots, k-1$ in $X$ is a rational homology sphere. 
	We call the filtration (\ref{Qstra}) a 
	\textbf{$\mathbb{Q}$-homology stratification} 
	(or \textbf{$\mathbb{Q}$-stratification} for short) 
	of $X$.
\end{definition}

We need to show that the links $\mathcal{X}$ are $\mathbb{Q}$-pseudomanifolds with two strata.
\begin{lemma}
	Let $\mathcal{X}$ be the link of an isolated point in an 8-dimensional toric variety. Then, $\mathcal{X}$ is a $\mathbb{Q}$-pseudomanifold with the $\mathbb{Q}$-stratification
	\begin{align}
		\mathcal{X}= \mathcal{X}_{7} \supset \mathcal{X}_{7-6}
	\end{align}
\end{lemma}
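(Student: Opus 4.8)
The plan is to verify directly that the filtration $\mathcal{X} = \mathcal{X}_7 \supset \mathcal{X}_1$ (where I use $\mathcal{X}_{7-6}$ as notation for the bottom stratum $\mathcal{X}_1$) can be augmented to a genuine pseudomanifold stratification in which every intermediate stratum has a rational homology sphere as its link. Recall from Section 3 that $\mathcal{X}$ carries the stratification $\mathcal{X} = \mathcal{X}_7 \supset \mathcal{X}_5 \supset \mathcal{X}_3 \supset \mathcal{X}_1$ coming from the preimages $\mathcal{X}_{2i+1} = p_{\mathcal{X}}^{-1}(\mathcal{M}^i)$ of the skeleta of the underlying $3$-dimensional polytope $\mathcal{M}$. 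This already refines the $2$-strata filtration, so the augmentation exists on the nose; what remains is to check the link condition for the connected components of $\mathcal{X}_5 - \mathcal{X}_3$ and of $\mathcal{X}_3 - \mathcal{X}_1$.

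First I would recall, as stated in the introduction, that the link of a component of $\mathcal{X}_5 - \mathcal{X}_3$ is homeomorphic to $S^1$, which is trivially a rational homology sphere; this follows from the construction of links in \cite{banagl2024link} applied to a $2$-dimensional face of $\mathcal{M}$, where the collapsing data produce an $S^1$ exactly as for the link of $Y_6 - Y_4$ in the introduction. Next I would treat the components of $\mathcal{X}_3 - \mathcal{X}_1$. By the same link-construction machinery, the link of such a component has a $1$-dimensional underlying polytope and $T^2$'s attached appropriately, so it has the CW structure of the link of a stratum whose study ``goes along the same line as the study of the links of the middle \dots stratum of a 6-dimensional compact toric variety,'' and hence it is a rational homology $3$-sphere — precisely the statement quoted in the introduction that ``the links of the connected components of $Y_4 - Y_2$ are rational homology 3-spheres.'' I would cite the relevant proposition of \cite{banagl2024link} and the discussion at the start of Section 3 rather than recomputing. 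Finally, $\mathcal{X}_7 - \mathcal{X}_1$ is dense in $\mathcal{X}$ and $\mathcal{X}_6 = \mathcal{X}_5 = \mathcal{X}_1$ in the degree-shifted indexing required by Definition \ref{PSMFD}, so the pseudomanifold axioms are satisfied; thus the filtration $\mathcal{X} \supset \mathcal{X}_1$ is a $\mathbb{Q}$-homology stratification with two strata, as claimed.

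The only genuine point requiring care — the ``main obstacle'' — is confirming that every link of a component of $\mathcal{X}_3 - \mathcal{X}_1$ is a \emph{rational} homology $3$-sphere rather than merely a space satisfying Poincaré duality: these components may themselves be (integrally) singular, since their links are the $3$-dimensional links studied in the $6$-dimensional toric case, which are rational homology spheres but need not be simply connected or integral homology spheres. The resolution is that rational coefficients are exactly what the definition of $\mathbb{Q}$-pseudomanifold demands, and the cited results from \cite{banagl2024link} establish the rational homology sphere property at this level; so nothing more than a careful invocation of those results, together with the standing convention that all homology is taken with $\mathbb{Q}$-coefficients, is needed.
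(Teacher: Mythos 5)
Your proof is correct and follows essentially the same approach as the paper: both arguments rest on the facts, inherited from the link construction in \cite{banagl2024link}, that the links of the intermediate strata ($S^1$ for $\mathcal{X}_5 - \mathcal{X}_3$ and rational homology $3$-spheres for $\mathcal{X}_3 - \mathcal{X}_1$) are rational homology spheres, so the $2$-strata filtration augments to a valid pseudomanifold stratification. The only cosmetic difference is that you keep $\mathcal{X}_5$ as a separate stratum, whereas the paper's proof (via Corollary~\ref{HomSphere}) works with the coarser stratification $\mathcal{X}_7 \supset \mathcal{X}_3 \supset \mathcal{X}_1$, absorbing $\mathcal{X}_5 - \mathcal{X}_3$ into the regular part since its links are already honest circles --- the same facts are needed either way.
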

\begin{proof}
	 As mentioned in the proof of Corollary \ref{HomSphere}, $\mathcal{X}$ is a pseudomanifold with the following stratification
	 \begin{align*}
	 	\mathcal{X}=\mathcal{X}_{7} \supset \mathcal{X}_{3} \supset \mathcal{X}_{1}.
	 \end{align*}
	 Following the introduced construction of the links in \cite[Section 3]{banagl2024link}, we can show that the link of the top stratum is a homological 3-sphere. Hence, the claim follows.
\end{proof}

\begin{definition}\label{Cout}
	We call the process of removing a disjoint union of
	cone-like neighborhoods of $\mathcal{X}_{1}$ \textbf{cutting out} the 1-dimensional stratum of $X$.
\end{definition}
Let $(M, \partial M)$ be the $\mathbb{Q}$-manifold obtained by cutting out 1-dimensional singularities of $\mathcal{X}$. As an immediate result of \cite[Theorem 47]{banagl2024link}, it follows that $(M, \partial M)$ satisfies Lefschetz duality rationally.

\begin{definition}\label{nseq}
	Let $n$ be a natural number. A CW complex $\mathcal{K}$ is called \textbf{rationally $n$-segmented} if it contains a sub-complex $\mathcal{K}_{<n} \subset \mathcal{K}$ such that
	$
	H_{r}(\mathcal{K}_{<n})=0 \;\; \text{for} \; r \geq n \;
	\text{and} \;
	i_{\ast}:H_{r}(\mathcal{K}_{<n}) \xrightarrow{\;\;\cong\;\;} H_{r}(\mathcal{K}) \; \text{for} \; r <n,
	$
	where $i$ is the inclusion of $\mathcal{K}_{<n}$ into $\mathcal{K}$.
\end{definition}
\begin{definition}(Goresky-MacPherson.)	
	A \textbf{perversity} 
	\begin{align*}
		\bar{p}: \mathbb{Z}_{ \geq 2} \longrightarrow \mathbb{Z}
	\end{align*}
	is a function such that
	$\bar{p}(2)=0 \; \text{and} \;
	\bar{p}(k+1)- \bar{p}(k) \in \{1,0\}.	$
	The \textbf{complementary perversity} $\bar{q}$ of $\bar{p}$ 
	is the one with $\bar{p}(k)+\bar{q}(k)=k-2$.
\end{definition}
Given any CW complex $\mathcal{K}$ and natural number $n$, there exists a \textbf{homology $n$-truncation} (Moore approximation) $f:\mathcal{K}_{<n} \to\mathcal{K}$, i.e., a continuous map from a CW complex $\mathcal{K}_{<n}$ to $\mathcal{K}$ such that $f_{\ast}:H_{\ast}(\mathcal{K}_{<n}; \mathbb{Z}) \to H_{\ast}(\mathcal{K}; \mathbb{Z})$ is an isomorphism for $\ast <n$ and $H_{\ast}(\mathcal{K}_{<n}; \mathbb{Z}) = 0$ for $\ast \geq n$, see \cite{banagl2010intersection} (for the simply connected case) and Wrazidlo \cite{wrazidlo2013induced} (in general). The map $f$ cannot, in general, be taken to be a sub-complex inclusion. We shall prove in Proposition \ref{LinkSeg} that the 5-dimensional links of the bottom stratum of $\mathcal{X}$ are rationally 3-segmented.

Let $X \supset X_{n-c}$ be an $n$-dimensional compact oriented topological pseudomanifold with two strata and a trivial link bundle. Similar to \cite[Section 6]{banagl2024link}, we can show that, for $x \in X_{n-c}$, although two different links need not be homeomorphic, the homology of links is well-defined.

Assume that $X$ has a trivial link bundle, that is, a neighborhood of a connected component of $X_{n-c} = \sqcup_{i} (X_{n-c})_{i}$ in $X$ looks like $(X_{n-c})_{i} \times \operatorname{cone}(\mathcal{L}_{i})$, where $\mathcal{L}_{i}$ is a $(c-1)$-dimensional closed manifold, a link of $(X_{n-c})_{i}$. For $x_{i} \in (X_{n-c})_{i}$, let $\mathcal{L}_{i}$ be an associated link. Assume that all links $\mathcal{L}_{i}$ can be equipped with CW structures such that they are rationally $k$-segmented, where $k=c-1-\bar{p}(c)$, for the perversity $\bar{p}$. Let $(M, \partial M)$ be the $\mathbb{Q}$-manifold with boundary obtained by cutting out all $(n-c)$-dimensional singularities of $X$. Let $(\mathcal{L}_{i})_{<k}$ be a sub-complex of $\mathcal{L}_{i}$ that truncates the homology. Then, we have
$
\partial M= \bigsqcup_{i} (X_{n-c})_{i} \times \mathcal{L}_{i}.
$
Let
$
X_{n-c} \times \mathcal{L}_{<k}= \bigsqcup_{i} (X_{n-c})_{i} \times (\mathcal{L}_{i})_{<k}
$
and define a homotopy class $g$ by the composition
$
g:X_{n-c} \times \mathcal{L}_{<k} \xhookrightarrow{\phantom{-}\phantom{-}} \partial M \xhookrightarrow{\phantom{-} \operatorname{incl.}  \phantom{-}} M$.
For the purposes of the present paper, we adopt the following definition:

\begin{definition}\label{IXgen}
	The perversity $\bar{p}$ rational intersection space $I^{\bar{p}}X$ of $X$ is defined to be
	\begin{align*}
		I^{\bar{p}}X=\operatorname{cone}(g)=M \bigcup_{g} \mathcal{C}((X_{n-c})_{i} \times \mathcal{L}_{<k}).
	\end{align*} 
\end{definition}
Due to the use of rational coefficients, $I^{\bar{p}}X$ as defined above need not be integrally homotopy equivalent to the construction of \cite{banagl2010intersection}, but the rational homology groups are isomorphic. 

\begin{theorem}\label{IX}
	Let $X \supset X_{n-c} = \sqcup_{i} (X_{n-c})_{i}$ be an $n$-dimensional, compact, oriented, stratified $\mathbb{Q}$-pseudomanifold with one singular stratum $X_{n-c}$ of dimension $(n-c)$ and trivial link bundle. The
	link $\mathcal{L}= \sqcup_{i} \mathcal{L}_{i}$ is assumed to be rationally $(c-1-\bar{p}(c))$ and $(c-1-\bar{q}(c))$-segmented and $X$, $X_{n-c}$ and $\mathcal{L}$ are oriented compatibly.
	Let $I^{\bar{p}}X$ and $I^{\bar{q}}X$ be $\bar{p}$- and $\bar{q}$-intersection spaces of $X$ with $\bar{p}$ and $\bar{q}$ complementary
perversities. Then there exists a generalized Poincaré duality isomorphism
		\begin{align*}
		D: \widetilde{H}_{r}(I^{\bar{p}}X)^{\ast} \xrightarrow{\phantom{-} \cong \phantom{-}} \widetilde{H}_{n-r}(I^{\bar{q}}X),
	\end{align*}
	where
	\begin{align*}
		\widetilde{H}_{r}(I^{\bar{p}}X)^{\ast} = \operatorname{Hom}(\widetilde{H}_{r}(I^{\bar{p}}X),\mathbb{Q}).
	\end{align*}

\end{theorem}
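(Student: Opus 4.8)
The plan is to follow the proof of Theorem~2.47 in \cite[Section~2.9]{banagl2010intersection} and check that every step survives the passage to a $\mathbb{Q}$-pseudomanifold with two strata and to rational coefficients. Two modifications are needed: (i) the manifold-with-boundary $(M,\partial M)$ obtained by cutting out $X_{n-c}$ is now only a $\mathbb{Q}$-manifold with boundary, since the augmented stratification of $X$ contains further strata whose links are rational homology spheres, so ordinary Lefschetz duality must be replaced by its rational analogue, which is available by \cite[Theorem~47]{banagl2024link}; and (ii) working over $\mathbb{Q}$ turns all the relevant exact sequences into exact sequences of finite-dimensional $\mathbb{Q}$-vector spaces, so every splitting exists and no $\operatorname{Ext}$ or torsion correction appears. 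Apart from these, the homological bookkeeping is identical.

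Concretely, I would first record the homology of the two intersection spaces. By Definition~\ref{IXgen}, $I^{\bar{p}}X=\operatorname{cone}(g)$ with $g\colon X_{n-c}\times\mathcal{L}_{<k}\to M$ and $k=c-1-\bar{p}(c)$, and $I^{\bar{q}}X=\operatorname{cone}(g')$ with $g'\colon X_{n-c}\times\mathcal{L}_{<k'}\to M$ and $k'=c-1-\bar{q}(c)$; note $k+k'=c$ because $\bar{p}(c)+\bar{q}(c)=c-2$. The mapping-cone cofiber sequences give long exact sequences
\begin{align*}
\cdots\to\widetilde{H}_{r}(X_{n-c}\times\mathcal{L}_{<k})\xrightarrow{\,g_{\ast}\,}\widetilde{H}_{r}(M)\to\widetilde{H}_{r}(I^{\bar{p}}X)\to\widetilde{H}_{r-1}(X_{n-c}\times\mathcal{L}_{<k})\xrightarrow{\,g_{\ast}\,}\widetilde{H}_{r-1}(M)\to\cdots
\end{align*}
and likewise for $\bar{q}$, so that $\widetilde{H}_{r}(I^{\bar{p}}X)$ is an extension of $\ker\big(g_{\ast}\colon\widetilde{H}_{r-1}(X_{n-c}\times\mathcal{L}_{<k})\to\widetilde{H}_{r-1}(M)\big)$ by $\operatorname{coker}\big(g_{\ast}\colon\widetilde{H}_{r}(X_{n-c}\times\mathcal{L}_{<k})\to\widetilde{H}_{r}(M)\big)$. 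Next I would describe $\partial M=\bigsqcup_{i}(X_{n-c})_{i}\times\mathcal{L}_{i}$ homologically: by the Künneth theorem over $\mathbb{Q}$, $H_{\ast}(\partial M)\cong\bigoplus_{i}H_{\ast}((X_{n-c})_{i})\otimes H_{\ast}(\mathcal{L}_{i})$, and, since segmentation (Definition~\ref{nseq}) forces $H_{\ast}((\mathcal{L}_{i})_{<k})\to H_{\ast}(\mathcal{L}_{i})$ to be an isomorphism in degrees $<k$ and zero in degrees $\geq k$, the subspace $H_{\ast}(X_{n-c}\times\mathcal{L}_{<k})\hookrightarrow H_{\ast}(\partial M)$ has as complement the ``cotruncation'' subspace $H_{\ast}(X_{n-c}\times\mathcal{L}_{\geq k})$. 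Because each $\mathcal{L}_{i}$ is a closed oriented $(c-1)$-manifold, I would then prove the key lemma that Poincaré duality on $\mathcal{L}_{i}$ carries the image of $H_{\ast}((\mathcal{L}_{i})_{<k})$ onto the annihilator of the image of $H_{\ast}((\mathcal{L}_{i})_{<k'})$, using $k+k'=c$; tensoring with Poincaré duality on the closed oriented manifold $X_{n-c}$ gives the analogous statement for $\partial M=X_{n-c}\times\mathcal{L}$.

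Finally I would assemble the duality. Rational Lefschetz duality for $(M,\partial M)$ — valid by \cite[Theorem~47]{banagl2024link} — provides natural isomorphisms $H_{r}(M)^{\ast}\cong H_{n-r}(M,\partial M)$ and $H_{r}(M,\partial M)^{\ast}\cong H_{n-r}(M)$ compatible with the long exact sequence of the pair $(M,\partial M)$ and with Poincaré duality on $\partial M$. A diagram chase comparing the LES of $(M,\partial M)$ with the cofiber sequences, and feeding in the truncation/cotruncation lemma, then identifies $\operatorname{coker}\big(g_{\ast}\colon H_{r}(X_{n-c}\times\mathcal{L}_{<k})\to H_{r}(M)\big)$ with the dual of $\ker\big(g'_{\ast}\colon H_{n-r-1}(X_{n-c}\times\mathcal{L}_{<k'})\to H_{n-r-1}(M)\big)$, and $\ker\big(g_{\ast}\colon H_{r-1}\to H_{r-1}(M)\big)$ with the dual of $\operatorname{coker}\big(g'_{\ast}\colon H_{n-r}\to H_{n-r}(M)\big)$. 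Dualizing the short exact sequence $0\to\operatorname{coker}(g_{\ast})\to\widetilde{H}_{r}(I^{\bar{p}}X)\to\ker(g_{\ast})\to0$ over $\mathbb{Q}$ term by term and matching it with $0\to\operatorname{coker}(g'_{\ast})\to\widetilde{H}_{n-r}(I^{\bar{q}}X)\to\ker(g'_{\ast})\to0$ yields the isomorphism $D\colon\widetilde{H}_{r}(I^{\bar{p}}X)^{\ast}\xrightarrow{\cong}\widetilde{H}_{n-r}(I^{\bar{q}}X)$, with naturality of all the dualities involved making $D$ canonical. The main obstacle I anticipate is exactly this last diagram chase: verifying that the Poincaré-dual splitting of $H_{\ast}(\partial M)$ is respected by the connecting maps $H_{\ast}(\partial M)\to H_{\ast}(M)$ and $H_{\ast}(M)\to H_{\ast}(M,\partial M)$ in the precise way needed to pair the $\bar{p}$-side cokernels with the $\bar{q}$-side kernels; in \cite{banagl2010intersection} this is handled via an explicit geometric cotruncation, whereas here I would argue purely homologically over $\mathbb{Q}$, which is cleaner but requires care to keep all identifications natural. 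The only genuinely new ingredient beyond \cite{banagl2010intersection} is rational Lefschetz duality for $(M,\partial M)$, already established in \cite[Theorem~47]{banagl2024link}, so the rest is a faithful transcription of Banagl's argument with $\mathbb{Z}$ replaced by $\mathbb{Q}$.
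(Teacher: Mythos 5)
Your proposal is correct and follows essentially the same path as the paper's proof: both adapt Banagl's argument for \cite[Theorem 2.47]{banagl2010intersection}, with rational Lefschetz duality for the $\mathbb{Q}$-manifold $(M,\partial M)$ from \cite[Theorem 47]{banagl2024link} as the single new ingredient, and rational Poincar\'e duality on $\partial M$ (invoked in the paper via \cite[Proposition 50, Corollary 49]{banagl2024link} and Banagl's Lemmas 2.45--2.46) supplying the truncation/cotruncation compatibility you package as the ``key lemma'' and final diagram chase. One small imprecision to tighten: in this $\mathbb{Q}$-pseudomanifold setting $\mathcal{L}_i$ and $\partial M$ are only $\mathbb{Q}$-manifolds, not manifolds, so the Poincar\'e duality you invoke on $\mathcal{L}_i$ is again only the rational version --- which is all you need since you work over $\mathbb{Q}$ throughout, but it should be stated as such.
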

\begin{proof}
We mainly mimic the proof of \cite[Theorem 2.47]{banagl2010intersection}. Let $(M, \partial M)$ be the $\mathbb{Q}$-manifold obtained by cutting out the $n-c$-dimensional singularities of $X$. Note that \cite[Theorem 47]{banagl2024link} implies that $(M, \partial M)$ satisfies Lefschetz duality rationally, as mentioned. The topological space $\partial M$ is a $\mathbb{Q}$-manifold by \cite[Proposition 50]{banagl2024link}, and therefore, it rationally satisfies Poincar\'e duality using \cite[Corollary 49]{banagl2024link}. Hence, we can use \cite[Lemma 2.45]{banagl2010intersection} and \cite[Lemma 2.46]{banagl2010intersection} with rational coefficients. Consequently, we obtain a similar commutative diagram as in the proof of \cite[Theorem 2.47]{banagl2010intersection} with rational coefficients.

\end{proof}

\begin{proposition}\label{LinkSeg}
Let $\mathcal{X}$ be the 7-dimensional link of an isolated singularity in an 8-dimensional toric variety, with a $\mathbb{Q}$-stratification of the form $\mathcal{X}_{7} \supset \mathcal{X}_{1}$. For each $x_{i} \in \mathcal{X}_{1}$, let $\mathcal{L}_{i}$ denote the associated link. Then $\mathcal{L}_{i}$ is rationally 3-segmented.
\end{proposition}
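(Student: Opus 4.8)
The plan is to realize a witnessing subcomplex directly inside the cellular chain complex $\mathcal{C}_{\ast}(\mathcal{L}_{i})$ of the CW structure furnished by \cite{banagl2024link}. Recall from the proof of Corollary \ref{HomSphere} that $\mathcal{L}_{i}$ is homeomorphic to the link of an isolated singularity in a $6$-dimensional toric variety: its base polytope $\mathcal{M}$ is a $2$-disc carrying a single interior $2$-cell $e^{2}_{\mathcal{M}}$ together with $g_{1}$ edges and $g_{1}$ vertices, over $\operatorname{int}(e^{2}_{\mathcal{M}})$ sits a $T^{3}$, over each edge a $T^{2}$, over each vertex an $S^{1}$, and each of these tori is given a CW structure with a single $0$-cell so that the collapsing maps are cellular. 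Consequently every $2$-cell of $\mathcal{L}_{i}$ is either the (unique) interior cell $a = e^{0}_{T^{3}}\times e^{2}_{\mathcal{M}}$ or an over-edge cell $e^{1}_{T^{2}_{j}}\times e^{1}_{\mathcal{M}_{j}}$; write $B$ for the set of over-edge $2$-cells and $\langle a\rangle,\langle B\rangle\subseteq\mathcal{C}_{2}(\mathcal{L}_{i})$ for the corresponding spans. Since any subcomplex of dimension $\leq 2$ has vanishing homology in degrees $\geq 3$, it is enough to produce such a subcomplex on which the inclusion induces isomorphisms on $H_{0},H_{1},H_{2}$; this is precisely Definition \ref{nseq} for $n=3$.

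The first step is to show that $\operatorname{im}(\partial_{3})\subseteq\langle B\rangle$, i.e. that $\partial_{3}$ has no component on $a$. There are no $3$-cells over the vertices of $\mathcal{M}$, so the $3$-cells of $\mathcal{L}_{i}$ are of the two types $e^{1}_{T^{3}}\times e^{2}_{\mathcal{M}}$ and $e^{2}_{T^{2}_{j}}\times e^{1}_{\mathcal{M}_{j}}$. The cellular boundary of $e^{1}_{T^{3}}\times e^{2}_{\mathcal{M}}$ has a part $\partial_{1}^{T^{3}}(e^{1}_{T^{3}})\times e^{2}_{\mathcal{M}}$, which vanishes because $T^{3}$ has a single $0$-cell, together with a part obtained by collapsing $e^{1}_{T^{3}}$ onto the tori $T^{2}_{j}$ over the edges of $\mathcal{M}$, hence lying in $\langle B\rangle$. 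The cellular boundary of $e^{2}_{T^{2}_{j}}\times e^{1}_{\mathcal{M}_{j}}$ has a part $\partial_{2}^{T^{2}_{j}}(e^{2}_{T^{2}_{j}})\times e^{1}_{\mathcal{M}_{j}}\in\langle B\rangle$, together with a part obtained by collapsing $e^{2}_{T^{2}_{j}}$ onto the circles $S^{1}$ over the two endpoint vertices of the edge; the image of a $2$-cell in a $1$-complex is the zero chain, so this part vanishes. Thus $\operatorname{im}(\partial_{3})\subseteq\langle B\rangle$.

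Granting this, choose a subset $D\subseteq B$ with $\#D=\operatorname{rk}(\partial_{3})$ such that the coordinate projection $\langle B\rangle\twoheadrightarrow\langle D\rangle$ restricts to an injection on $\operatorname{im}(\partial_{3})$; this is possible because $\operatorname{im}(\partial_{3})$ is a subspace of $\langle B\rangle$, and it gives $\langle B\rangle=\operatorname{im}(\partial_{3})\oplus\langle B\setminus D\rangle$, hence $\mathcal{C}_{2}(\mathcal{L}_{i})=\langle a\rangle\oplus\langle B\setminus D\rangle\oplus\operatorname{im}(\partial_{3})$. Let $(\mathcal{L}_{i})_{<3}$ be the subcomplex made of the full $1$-skeleton of $\mathcal{L}_{i}$ together with the cell $a$ and the cells in $B\setminus D$; this is genuinely a subcomplex since the boundaries of these cells lie in the $1$-skeleton, and $\dim(\mathcal{L}_{i})_{<3}\leq 2$, so its homology vanishes in degrees $\geq 3$. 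The inclusion is an isomorphism on $H_{0}$ by connectedness. On $H_{1}$: since $\operatorname{im}(\partial_{3})\subseteq\ker(\partial_{2})$ one has $\partial_{2}(\langle a\rangle\oplus\langle B\setminus D\rangle)=\partial_{2}(\langle a\rangle\oplus\langle B\setminus D\rangle\oplus\operatorname{im}(\partial_{3}))=\operatorname{im}(\partial_{2})$, whence $H_{1}((\mathcal{L}_{i})_{<3})=\ker(\partial_{1})/\operatorname{im}(\partial_{2})=H_{1}(\mathcal{L}_{i})$. On $H_{2}$: the subcomplex has no $3$-cells, so $H_{2}((\mathcal{L}_{i})_{<3})=\ker(\partial_{2})\cap(\langle a\rangle\oplus\langle B\setminus D\rangle)$, and the direct-sum decomposition of $\mathcal{C}_{2}$ together with $\operatorname{im}(\partial_{3})\subseteq\ker(\partial_{2})$ shows by a routine diagram chase that this intersection is a complement of $\operatorname{im}(\partial_{3})$ inside $\ker(\partial_{2})$, so it maps isomorphically onto $H_{2}(\mathcal{L}_{i})=\ker(\partial_{2})/\operatorname{im}(\partial_{3})$. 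Hence $(\mathcal{L}_{i})_{<3}$ exhibits $\mathcal{L}_{i}$ as rationally $3$-segmented.

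I expect the only genuine work to be the bookkeeping in the second step: transcribing the precise form of the CW structure and the collapsing maps of $\mathcal{L}_{i}$ from \cite{banagl2024link} and confirming the two inputs used there — that the tori occurring in the construction may be taken with a single $0$-cell (so $\partial_{1}^{T^{3}}=0$) and that the $2$-cells of the edge-tori collapse to the zero chain over the vertices. Once $\operatorname{im}(\partial_{3})\subseteq\langle B\rangle$ is in place, the passage to the subcomplex and the verification of the segmentation axioms are formal linear algebra.
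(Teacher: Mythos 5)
Your proposal is correct, and it follows the same basic strategy as the paper — exhibit the $3$-segmentation by producing an explicit CW subcomplex of $\mathcal{L}_i$ using the product-over-polytope structure of the link — but it is organized differently. The paper's proof identifies $\mathcal{L}_{i}$ with the link of an isolated singularity in a $6$-dimensional toric variety, recalls the CW structure on each edge-torus $T^{2}$ with $\alpha-1$ $2$-cells, $\alpha$ $1$-cells, and a single $0$-cell (so these $2$-cells are $3$-cells of $\mathcal{L}_{i}$), notes that omitting one $2$-cell from each $T^{2}$ does not change the $2$-skeleton of the link, and then refers to \cite[Proposition 67]{banagl2024link} for the remaining verification. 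You instead construct a genuinely $2$-dimensional witness: you observe the structural fact $\operatorname{im}(\partial_{3})\subseteq\langle B\rangle$ — which follows, as you correctly identify, from the single $0$-cell on $T^{3}$ (killing $\partial_{1}^{T^{3}}$) and the absence of $2$-cells over a vertex circle $S^{1}$ — choose a pivot set $D\subseteq B$ on which $\partial_{3}$ restricts to an isomorphism onto $\langle D\rangle$, and take $(\mathcal{L}_{i})_{<3}$ to be the $1$-skeleton together with $a$ and $B\setminus D$. The linear algebra verifying $H_{0},H_{1},H_{2}$ of the inclusion is then routine and your checks of it are sound. What your version buys is self-containedness: the paper's proof leans on an external proposition for the segmentation verification, whereas yours reduces it to the one structural observation $\operatorname{im}(\partial_{3})\subseteq\langle B\rangle$ and elementary facts about complements of subspaces. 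The one place where you should be explicit, and which you rightly flag as "bookkeeping," is that the cellular approximations of the collapsing maps $T^{3}\to T^{2}_{j}$ and $T^{2}_{j}\to S^{1}$ inherited from \cite{banagl2024link} really do carry $1$-cells to $1$-cells (or to points) and necessarily annihilate $2$-cells over vertices; both facts hold in the CW structures the paper fixes, so the argument goes through.
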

\begin{proof}
The link of a point $x_{i} \in \mathcal{X}_{1}$, $\mathcal{L}_{i}$, has the same structure as the link of an isolated singularity in a 6-dimensional toric variety. More specifically, this means the following. The underlying polytope of $\mathcal{L}_{i}$, $\mathcal{M}_{i}$, is a 2-dimensional polytope. We attach a torus $T^3$ to the interior of the polytope $\mathcal{M}_{i}$, and each 1-dimensional face and vertex of $\mathcal{M}_{i}$ is associated with a torus $T^2$ and a circle $S^1$, respectively. Recall that we equipped each $T^{2}$ with the following CW structure:
\begin{align*}
	T^{2} \cong \big( \bigcup_{i=1}^{\alpha-1} e^{2}_{i} \big) \cup \big( \bigcup_{i=1}^{\alpha} e^{1}_{i} \big) \cup e^{0},
\end{align*}
where $\alpha$ is defined in Remark \ref{Alpha}. From each $T^{2}$, we omit one 2-cell. Despite omitting the 2-cells from the tori, the 2-skeleton of the link has not changed. The rest of the proof proceeds along the same lines as in \cite[Proposition 67]{banagl2024link}.

\end{proof}
Given that the link of $x \in \mathcal{X}_{1}$ is 3-segmented, we can apply Theorem \ref{IX} to the link $\mathcal{X}$.
\begin{lemma}
	Let $\mathcal{X}$ be the link of an isolated singularity in an 8-dimensional toric variety with the following $\mathbb{Q}$-stratification
	\begin{align*}
	\mathcal{X}=\mathcal{X}_{7} \supset \mathcal{X}_{1}.
	\end{align*}
	Let $\mathcal{L}$ be the link of a connected component of $\mathcal{X}_{1}$ and $f_{1}^{\mathcal{L}}$ be the number of 1-dimensional faces of the underlying polytope of $\mathcal{L}$, $\mathcal{P}$. Then, the Betti numbers of $\mathcal{L}$ are
	\begin{align*}
	b_{5}^{\mathcal{L}}=1,\;		b_{4}^{\mathcal{L}}=0, \;
	b_{3}^{\mathcal{L}}=f_{1}^{\mathcal{L}}-3, \;
	b_{2}^{\mathcal{L}}=f_{1}^{\mathcal{L}}-3, \;
	b_{1}^{\mathcal{L}}=0, \;	
	b_{0}^{\mathcal{L}}&=1.
	\end{align*}
\end{lemma}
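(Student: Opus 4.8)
The statement concerns the link $\mathcal{L}$ of a connected component of $\mathcal{X}_1$, which by Proposition \ref{LinkSeg} (and the preceding discussion) has exactly the structure of the link of an isolated singularity in a $6$-dimensional toric variety: a $2$-dimensional underlying polytope $\mathcal{P}$ with a $T^3$ over the interior $2$-cell, a $T^2$ over each edge, and an $S^1$ over each vertex. My plan is therefore to reduce the computation entirely to the $6$-dimensional case already treated in \cite{banagl2024link}, translating the combinatorial invariants of that setting into the $f_1^{\mathcal{L}}$ appearing here. Concretely, for a $2$-dimensional polytope the number of $2$-faces is $1$, the number of edges is $f_1^{\mathcal{L}}$, and the number of vertices is also $f_1^{\mathcal{L}}$; plugging these into the Betti-number formulas for the link of an isolated singularity in a $6$-dimensional toric variety from \cite{banagl2024link} should directly yield the asserted values.

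The key steps, in order, are as follows. First, I would make precise the identification (already sketched in the proof of Proposition \ref{LinkSeg}) of the CW chain complex of $\mathcal{L}$ with the one used in the $6$-dimensional analysis, using the CW structure $T^2 \cong \big(\bigcup_{i=1}^{\alpha-1} e^2_i\big) \cup \big(\bigcup_{i=1}^{\alpha} e^1_i\big) \cup e^0$ on the edge tori and the CW structures on $T^3$ and $S^1$; the boundary operators are assembled exactly as in Section 3 of the present paper but one stratum shorter. Second, I would run the same rank computations stratum by stratum: $b_5^{\mathcal{L}} = 1$ and $b_4^{\mathcal{L}} = 0$ are immediate from connectivity and the top of the chain complex; $b_3^{\mathcal{L}}$ comes from the kernel of the boundary out of the $T^3$-over-$2$-cell together with the $T^2$-over-edge contributions, giving $f_1^{\mathcal{L}} - 3$ after the rank of the relevant collapsing matrix (the analogue of the $\mathsf{A}$- and $\mathsf{C}$-type matrices) is accounted for. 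Third, $b_2^{\mathcal{L}}$ I would obtain either by a direct computation parallel to the one for $\operatorname{rk}(H_2(\mathcal{X};\mathbb{Q}))$ in Section 3, or — more cleanly — by invoking rational Poincaré duality for $\mathcal{L}$: since $\mathcal{P}$ is a $2$-dimensional polytope (hence all vertices are rationally regular, as in the tetrahedron case of Corollary \ref{HomSphere}), $\mathcal{L}$ is a rational homology manifold, so $b_2^{\mathcal{L}} = b_3^{\mathcal{L}} = f_1^{\mathcal{L}} - 3$ and $b_1^{\mathcal{L}} = b_4^{\mathcal{L}} = 0$. Finally $b_0^{\mathcal{L}} = 1$ and $b_1^{\mathcal{L}} = 0$ follow from connectivity and the same duality (or a direct look at $\partial_1$).

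The main obstacle I anticipate is \emph{not} any of the individual rank computations — those are routine once the chain complex is written down — but rather justifying cleanly that the $2$-dimensional-polytope case genuinely forces $\mathcal{L}$ to satisfy rational Poincaré duality, so that the shortcut in the third step is legitimate. This requires checking that over a $2$-dimensional polytope the links of the $S^1$-strata of $\mathcal{L}$ (sitting over the vertices) are rational homology spheres; this is the same phenomenon that makes the link of an isolated point in a $6$-dimensional toric variety over a triangle a rational homology sphere, and it reduces to the fact that a $2$-dimensional cone has only two facets, so the collapsing data at a vertex produces a lens-space-like rational homology $3$-sphere. If one prefers to avoid invoking duality, the alternative is simply to carry out the $b_2^{\mathcal{L}}$ computation explicitly as in \cite[Proposition 67]{banagl2024link}, which the proof of Proposition \ref{LinkSeg} has already pointed to; in that case the only care needed is the bookkeeping of the $\alpha$ and $(\alpha-1)$ cells on each $T^2$, which cancel in the homology ranks exactly as they do for $\mathcal{X}$ in Section 3.
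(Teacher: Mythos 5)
Your approach is essentially the paper's: both reduce to the computation for the link of an isolated singularity in a $6$-dimensional toric variety, the paper doing so by constructing $\mathcal{P}\subset\mathcal{M}$ explicitly as a planar slice meeting the relevant $1$-dimensional faces, identifying $\mathcal{L}\cong\pi^{-1}(\mathcal{P})/S^{1}$, and then citing \cite[Proposition~33]{banagl2024link} for the boundary operators and Betti numbers. Your rational Poincar\'e duality shortcut for $b_{2}^{\mathcal{L}}$ and $b_{1}^{\mathcal{L}}$ is a valid alternative to the direct chain-level computation — the $3$-dimensional links of the $S^{1}$-strata of $\mathcal{L}$ are finite quotients of $S^{3}$ and hence rational homology spheres, so $\mathcal{L}$ is a rational homology manifold — although the paper does not take that route.
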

\begin{proof}
Let $\pi: \mathcal{X} \longrightarrow \mathcal{M}$ be the natural projection. Note that $\mathcal{X}_{1} = \bigsqcup_{i} S^{1}_{i}$. Let $\mathcal{L}$ be the link of $S^{1} \subset \mathcal{X}_{1}$, and let $\mathcal{P} \subset \mathcal{M}$ be an embedding, as in the construction of links in \cite[Section 3]{banagl2024link}. In other words, we choose $\mathcal{P}$ to be the intersection of a 2-dimensional plane with $\mathcal{M}$, ensuring that it intersects all the 1-dimensional faces of $\mathcal{M}$ for which $\pi(S^{1})$ is a proper face. Then, we have $\mathcal{L} \cong \sfrac{\pi^{-1}(\mathcal{P})}{S^{1}}$. Hence, we obtain $\mathcal{L}$ by attaching $T^3$, $T^2$, and $S^1$ to the 2-dimensional and 1-dimensional faces and vertices of $\mathcal{P}$, respectively. The attaching maps are the corresponding collapses derived from the cone associated with the isolated singularity in the 8-dimensional toric variety. Following \cite[Proposition 33]{banagl2024link}, we can form the boundary operators and calculate the Betti numbers.

\end{proof}

\begin{lemma}\label{MHOM}
	Let $M$ be the $\mathbb{Q}$-manifold obtained by cutting out all singularities of $\mathcal{X}$, where $\mathcal{X}$ is a link of an isolated singularity in an 8-dimensional toric variety. Then, the lower Betti numbers of $M$ are
	\begin{align*}
	b_{4}^{M}&=3f_{1}-f_{2}-6-(b_{1}+b_{2}), \,
	b_{3}^{M}=3f_{1}-f_{2}-6-b_{1}, \,
	b_{2}^{M}=f_{1}-4, \,
	b_{1}^{M}=0, \,
	b_{0}^{M}=1,
	\end{align*}
	where $f_{1}$and $f_{2}$ are the number of 2-dimensional and 1-dimensional faces, respectively, of $\mathcal{M}$, the underlying 3-dimensional polytope of $\mathcal{X}$. The parameters $b_{2}$ and $b_{1}$ are defined in Equations (\ref{bp1}), and (\ref{bp2}).
\end{lemma}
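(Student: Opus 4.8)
The strategy I would follow is to derive $H_{\ast}(M)$ from the already-computed $H_{\ast}(\mathcal{X})$ of Proposition \ref{Hom1Ver} by combining rational Lefschetz duality for the $\mathbb{Q}$-manifold-with-boundary $(M,\partial M)$ — available from \cite[Theorem 47]{banagl2024link} — with the long exact sequence of the pair $(\mathcal{X},\mathcal{X}_{1})$. As a first step I would record the two standard identifications relating $M$ to $\mathcal{X}$ that come from the cone-like structure along $\mathcal{X}_{1}$ (Definitions \ref{pseudomanifolds} and \ref{Cout}): deleting an open cone-like neighborhood of $\mathcal{X}_{1}$ leaves $M$, and the open complement $\mathcal{X}\setminus\mathcal{X}_{1}$ deformation retracts onto $M$ (collapse each punctured cone radially onto its link, which lies in $\partial M$), so $H_{\ast}(M)\cong H_{\ast}(\mathcal{X}\setminus\mathcal{X}_{1})$; dually, since $\mathcal{X}_{1}$ is a deformation retract of its cone-like neighborhood, excision yields $H_{\ast}(M,\partial M)\cong H_{\ast}(\mathcal{X},\mathcal{X}_{1})$.

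Next, I would use that $\mathcal{X}_{1}\cong\bigsqcup_{i=1}^{f_{2}-f_{1}+2}S^{1}$ (recorded in the proof of Corollary \ref{HomSphere}), so $H_{j}(\mathcal{X}_{1})=0$ for all $j\geq 2$. Consequently, for every $r\geq 3$ exactness of
\begin{align*}
0=H_{r}(\mathcal{X}_{1})\longrightarrow H_{r}(\mathcal{X})\longrightarrow H_{r}(\mathcal{X},\mathcal{X}_{1})\longrightarrow H_{r-1}(\mathcal{X}_{1})=0
\end{align*}
forces $H_{r}(\mathcal{X},\mathcal{X}_{1})\cong H_{r}(\mathcal{X})$. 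Combining this with the two isomorphisms from the first step and with rational Lefschetz duality $H_{k}(M)\cong H^{7-k}(M,\partial M)\cong H_{7-k}(M,\partial M)$ (the last isomorphism by universal coefficients over $\mathbb{Q}$), I get, for $0\leq k\leq 4$ (so that $7-k\geq 3$),
\begin{align*}
H_{k}(M)\;\cong\;H_{7-k}(M,\partial M)\;\cong\;H_{7-k}(\mathcal{X},\mathcal{X}_{1})\;\cong\;H_{7-k}(\mathcal{X}),
\end{align*}
hence $b_{k}^{M}=b_{7-k}^{\mathcal{X}}$. Substituting the values from Proposition \ref{Hom1Ver} gives exactly $b_{0}^{M}=1$, $b_{1}^{M}=0$, $b_{2}^{M}=f_{1}-4$, $b_{3}^{M}=3f_{1}-f_{2}-6-b_{1}$ and $b_{4}^{M}=3f_{1}-f_{2}-6-(b_{1}+b_{2})$, which is the claim.

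The only step where I expect to need genuine care is the first one: checking that the operation ``cutting out cone-like neighborhoods of $\mathcal{X}_{1}$'' really produces a pair $(M,\partial M)$ with $H_{\ast}(M)\cong H_{\ast}(\mathcal{X}\setminus\mathcal{X}_{1})$ and $H_{\ast}(M,\partial M)\cong H_{\ast}(\mathcal{X},\mathcal{X}_{1})$. This is routine given the local cone structure along $\mathcal{X}_{1}$ from Definition \ref{pseudomanifolds}, but one must keep track of which neighborhoods are taken open versus closed when invoking excision. Everything after that is bookkeeping in the long exact sequence; in particular — in contrast to the intersection-space arguments elsewhere in this section — no information about the links $\mathcal{L}_{i}$ of the components of $\mathcal{X}_{1}$ beyond $\dim\mathcal{X}_{1}=1$ enters the proof.
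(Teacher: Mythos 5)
Your proposal is correct and follows essentially the same route as the paper: rational Lefschetz duality $H_{k}(M)\cong H_{7-k}(M,\partial M)$ combined with the identification $H_{i}(\mathcal{X})\cong H_{i}(M,\partial M)$ for $i\geq 3$, then reading off the Betti numbers from Proposition \ref{Hom1Ver}. You merely justify the latter isomorphism more explicitly (via the long exact sequence of $(\mathcal{X},\mathcal{X}_{1})$ and excision) where the paper's proof asserts it more tersely from the CW structure of $M/\partial M$.
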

\begin{proof}
	The space $M$ is a $\mathbb{Q}$-manifold satisfying Lefschetz duality rationally. Thus, we have
	\begin{align*}
	\text{Hom}(H^{k}(M;\mathbb{Q});\mathbb{Q}) \cong H_{7-k}(M, \partial M ; \mathbb{Q})
	\end{align*}
	From the CW structure of $M$ it is clear that
	\begin{align}\label{MXhomology}
	H_{i}(X) \cong H_{i}( \sfrac{M}{ \partial M } )\; \; \text{for} \;\; i \geq 3
	\end{align}
    because coning off the boundary, $\partial M$, only modifies the lower dimensional cells. Finally, we use $\widetilde{H}_{j}(\sfrac{M}{ \partial M }) \cong \widetilde{H}_{j}(M, \partial M)$.  
\end{proof}
At last, we possess all the essential tools to establish the first main theorem of this section. In the proof of the following theorem, we will show that $b_{1} = 0$. With $b_{1} = 0$ established, we can provide a relatively simple algebraic description of the parameter $b_{2}$ using the long exact sequence of the relative homology groups.
\begin{theorem}\label{IntHom}
	Let $\mathcal{X}$ be the link of an isolated singularity in an 8-dimensional toric variety. Let $I^{\bar{n}}\mathcal{X}$ be the generalized intersection space associated with $\mathcal{X}$ with respect to middle perversity $\bar{n}$, as defined in Definition \ref{IXgen}. Then, the Betti numbers of $I^{\bar{n}}\mathcal{X}$ are given by
	\begin{align*}
	b_{7}^{I^{\bar{n}}\mathcal{X}}&=0,\; 
	b_{6}^{I^{\bar{n}}\mathcal{X}}=m-1, \;
	b_{5}^{I^{\bar{n}}\mathcal{X}}=f_{2}-4-b_{2}, \;
	b_{4}^{I^{\bar{n}}\mathcal{X}}=3f_{1}-f_{2}-6-b_{2}, \\
	b_{3}^{I^{\bar{n}}\mathcal{X}}&=3f_{1}-f_{2}-6-b_{2},  \;
	b_{2}^{I^{\bar{n}}\mathcal{X}}=f_{2}-4-b_{2}, \;
	b_{1}^{I^{\bar{n}}\mathcal{X}}=m-1, \;
	b_{0}^{I^{\bar{n}}\mathcal{X}}=1, 
	\end{align*}
	where $f_{1}$ and $f_{2}$ are the numbers of 2-dimensional and 1-dimensional faces of the underlying 3-dimensional polytope of $\mathcal{X}$, $\mathcal{P}$. The parameter $m$ denotes the number of singular components of $\mathcal{X}_{1}$, and we defined $b_{2}$ in Equation (\ref{bp2}).
\end{theorem}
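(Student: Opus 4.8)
The plan is to compute the reduced homology of $I^{\bar{n}}\mathcal{X} = \operatorname{cone}(g)$ directly from the mapping cone long exact sequence, feeding in the homology of $M$ from Lemma \ref{MHOM}, the homology of the truncated boundary pieces, and the duality isomorphism of Theorem \ref{IX} to pin down the remaining unknown $b_1$. Recall $I^{\bar{n}}\mathcal{X}= M\cup_g \mathcal{C}(\mathcal{X}_1 \times \mathcal{L}_{<k})$ where, for the middle perversity $\bar{n}$ in codimension $c=6$, the truncation degree is $k = c-1-\bar{n}(c) = 5-\bar{n}(6)$; since $\bar{n}(6)=2$ we get $k=3$, so the relevant truncation of each link $\mathcal{L}_i$ is $(\mathcal{L}_i)_{<3}$, which by Proposition \ref{LinkSeg} carries $H_0=\mathbb{Q}$, $H_1=0$, $H_2 = H_2(\mathcal{L}_i) = \mathbb{Q}^{f_1^{\mathcal{L}_i}-3}$, and vanishes above. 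First I would assemble $\partial M = \bigsqcup_i S^1_i \times \mathcal{L}_i$ and the subspace $\mathcal{X}_1\times\mathcal{L}_{<3}=\bigsqcup_i S^1_i\times(\mathcal{L}_i)_{<3}$, and via the Künneth formula write down $H_*(\mathcal{X}_1\times\mathcal{L}_{<3})$: concentrated in degrees $0,1,2,3$ with ranks governed by $m$ (the number of components of $\mathcal{X}_1$) and the sum $\sum_i(f_1^{\mathcal{L}_i}-3)$, which by the counting of faces of $\mathcal{M}$ rearranges into expressions in $f_1,f_2$ (this face-counting bookkeeping is where I expect tedium but no real difficulty — the identity $\sum_i f_1^{\mathcal{L}_i}$ counts incidences of edges of $\mathcal{M}$ with singular vertices).

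Next I would run the long exact sequence of the pair $(\operatorname{cone}(g), M)$, equivalently the Mayer–Vietoris / mapping-cone sequence
\begin{align*}
\cdots \to H_r(\mathcal{X}_1\times\mathcal{L}_{<3}) \xrightarrow{g_*} H_r(M) \to \widetilde{H}_r(I^{\bar{n}}\mathcal{X}) \to H_{r-1}(\mathcal{X}_1\times\mathcal{L}_{<3}) \xrightarrow{g_*} H_{r-1}(M)\to\cdots
\end{align*}
In the top range ($r\geq 4$) the truncated term vanishes, so $\widetilde{H}_r(I^{\bar{n}}\mathcal{X})\cong \widetilde{H}_r(M)$; combined with \eqref{MXhomology} and $\widetilde{H}_r(M)\cong\widetilde{H}_r(M/\partial M)\cong \widetilde H_r(\mathcal X)$ for $r\geq 3$, this gives $b_7=0$, $b_6=0$... — and here is the first place the argument forces $b_1=0$: $\widetilde H_6(I^{\bar n}\mathcal X)$ must equal the stated $m-1$, and by Theorem \ref{IX} (generalized Poincaré duality, with $\bar n$ self-complementary up to the segmentation hypothesis which Proposition \ref{LinkSeg} supplies) it is dual to $\widetilde H_1(I^{\bar n}\mathcal X)$, which the mapping-cone sequence computes from $H_1(\mathcal X_1\times\mathcal L_{<3})\to H_1(M)=0$ and $H_0(\mathcal X_1\times\mathcal L_{<3})\to H_0(M)$. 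Tracking $g_*$ on $H_0$ (it is the map $\mathbb{Q}^m\to\mathbb{Q}$ summing components, with kernel of rank $m-1$) yields $b_1 = m-1$ and, comparing with the formula of Lemma \ref{MHOM} for $b_3^M = 3f_1-f_2-6-b_1$ against the duality-predicted value, forces $b_1=0$. With $b_1=0$ installed, the middle-degree computation ($r=3,4$) reduces to analyzing $\operatorname{rk}(g_*: H_2(\mathcal X_1\times\mathcal L_{<3})\to H_2(M))$ and $\operatorname{rk}(g_*\colon H_3\to H_3)$; since $H_3(\mathcal X_1\times\mathcal L_{<3})\cong H_1(\mathcal X_1)\otimes H_2(\mathcal L_{<3})$ and $g$ factors through $\partial M\hookrightarrow M$, these ranks are exactly expressible through $b_2 = \operatorname{rk}\ker(H_3(\partial M)\to H_3(M))$ from \eqref{bp2}, giving $b_4^{I^{\bar n}\mathcal X}=b_3^{I^{\bar n}\mathcal X}=3f_1-f_2-6-b_2$ and $b_5^{I^{\bar n}\mathcal X}=b_2^{I^{\bar n}\mathcal X}=f_2-4-b_2$.

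The main obstacle is controlling the map $g_*$ in degrees $2$ and $3$ — i.e. showing that passing from $\partial M$ to the truncated subspace $\mathcal{X}_1\times\mathcal{L}_{<3}$ does not lose the rank information encoded by $b_2$, and that the relevant kernel/cokernel of $g_*$ on $H_3$ is precisely $b_2$ and not some a priori larger defect. I would handle this by noting (as in Proposition \ref{LinkSeg}) that omitting the top $2$-cells from the $T^2$-factors does not change the $2$-skeleton, hence $H_{\leq 2}(\mathcal{L}_{<3})\to H_{\leq 2}(\partial M$-fibers$)$ is injective and in degree $3$ the truncated complex kills exactly the classes that $b_2$ already measures; a small diagram chase comparing the mapping-cone sequences for $g$ and for the full inclusion $\partial M\hookrightarrow M$, together with the Lefschetz duality of $(M,\partial M)$ from \cite[Theorem 47]{banagl2024link}, then closes the gap. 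The remaining degrees are bookkeeping, and $b_0=1$ is immediate from connectivity.
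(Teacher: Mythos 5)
Your overall strategy — run the mapping-cone/Mayer–Vietoris sequence for $I^{\bar n}\mathcal{X}=M\cup_g\mathcal{C}(\mathcal{X}_1\times\mathcal{L}_{<3})$, use Künneth plus face-counting to identify the ranks of $H_*(\mathcal{X}_1\times\mathcal{L}_{<3})$, and invoke the duality of Theorem~\ref{IX} to pin down $b_1$ — is essentially the paper's approach. But there are two concrete problems.

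First, the chain of isomorphisms $\widetilde H_r(M)\cong\widetilde H_r(M/\partial M)\cong\widetilde H_r(\mathcal{X})$ for $r\ge 3$ is false, and it is doing real work in your top-range computation. We have $\widetilde H_r(M/\partial M)\cong H_r(M,\partial M)$, which is \emph{not} $H_r(M)$. At $r=7$ this already fails: $M$ is a compact $7$-manifold with nonempty boundary, so $H_7(M)=0$, whereas $H_7(M,\partial M)\cong\mathbb{Q}$. At $r=6$, $H_6(M)\cong H^1(M,\partial M)\cong\mathbb{Q}^{m-1}$ (use $H_1(M)=0$ and the $m$ components of $\partial M$), while $H_6(M,\partial M)\cong H^1(M)=0$; so your initial conclusion ``$b_6=0$'' contradicts the asserted $b_6^{I^{\bar n}\mathcal X}=m-1$. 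The fix is that you should only use $\widetilde H_r(I^{\bar n}\mathcal X)\cong H_r(M)$ for $r\ge 5$ (not $r\ge 4$: in degree $4$ the term $H_3(\mathcal X_1\times\mathcal L_{<3})$ survives), and then get $b_6=m-1$ either by computing $H_6(M)$ via Lefschetz duality as above, or — as the paper does — by computing $b_1^{I^{\bar n}\mathcal X}=m-1$ from the bottom of the sequence and appealing to the self-duality of the middle-perversity intersection space.

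Second, the step that forces $b_1=0$ is not carried out; what you call a ``small diagram chase'' is in fact the crux. The paper compares the Mayer–Vietoris sequence for $I^{\bar n}\mathcal X$ with the long exact sequence of the pair $(M,\partial M)$ in a commutative ladder, obtains a map $H_3(I^{\bar n}\mathcal X)\to H_3(M,\partial M)$ via \cite[Lemma 2.46]{banagl2010intersection}, and extracts $b_3^{I^{\bar n}\mathcal X}=b_3^{(M,\partial M)}=b_4^M$ (the last by Lefschetz duality). Combined with $b_3^{I^{\bar n}\mathcal X}=b_4^{I^{\bar n}\mathcal X}$ from Theorem~\ref{IX}, this forces $H_4(M)\hookrightarrow H_4(I^{\bar n}\mathcal X)$ to be an isomorphism, hence $H_3(\cap)\to H_3(M)$ must be injective; since $\operatorname{rk}H_3(\cap)=3f_1-f_2-6$ while $\operatorname{rk}H_3(M)=3f_1-f_2-6-b_1$, injectivity forces $b_1\le 0$, hence $b_1=0$. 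Your proposal instead tries to extract $b_1=0$ by ``comparing $b_3^M$ against the duality-predicted value,'' but you never say what that value is or why it equals $b_3^M$; without the ladder comparison identifying $H_3(I^{\bar n}\mathcal X)$ with $H_3(M,\partial M)$, the duality of Theorem~\ref{IX} alone only constrains $I^{\bar n}\mathcal X$, not $M$, and does not yet force $b_1=0$. That ladder argument — and checking that the outer vertical maps are suitable (note that $H_3(\cap)\to H_3(\partial M)$ is injective but not surjective, so a careful variant of the five-lemma or a direct rank count is required) — is the content you would need to supply.
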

\begin{proof}
	For the proof, we use the Mayer-Vietoris sequence. Let $M$ be the $\mathbb{Q}$-manifold obtained by cutting out the singularities of $\mathcal{X}$. We set $\mathcal{U} \simeq M$. Let $\mathcal{V} \cong \mathcal{C}(\sqcup_{i} S^{1} \times (\mathcal{L}_{i})_{<k} )$, where $i$ goes over the singular connected components of $\mathcal{X}_{1}$. Hence, we get $\cap := \mathcal{U} \cap \mathcal{V} \simeq \sqcup_{i} S^{1} \times (\mathcal{L}_{i})_{<k}$, and $I^{\bar{n}}\mathcal{X} = \mathcal{U} \cup \mathcal{V}$. Consequently, we have
	\begin{align*}
		&0 \longrightarrow H_{6}(M)  \xrightarrow{\phantom{-} \cong \phantom{-} } H_{6}(I^{\bar{n}}\mathcal{X})  \longrightarrow 0 \\
		&0 \longrightarrow H_{5}(M)  \xrightarrow{\phantom{-} \cong \phantom{-}} H_{5}(I^{ \bar{n} }\mathcal{X})  \longrightarrow 0 \\
		&0 \longrightarrow  H_{1}(I^{ \bar{n} }\mathcal{X}) \xrightarrow{ \phantom{-} \cong \phantom{-}} \overbrace{ \widetilde{H_{0}}(\cap) }^{\cong \mathbb{Q}^{m-1 } } \xrightarrow{\phantom{--}}   0.
	\end{align*}
	Thus, we arrive at $b_{1}^{I \mathcal{X}}=m-1$. Since the singularities of $\mathcal{X}$ are even-co-dimensional, we have $I^{ \bar{n} }\mathcal{X}=I^{ \bar{m} }\mathcal{X}$, and duality yields 
	$b_{6}^{I \mathcal{X}}= m-1$. 
	The more interesting part of the long exact sequence is the following:
	\begin{align*}
		0 &\longrightarrow  \overbrace{H_{4}(M)}^{\cong \mathbb{Q}^{3f_{1}-f_{2}-6-(b_{1}+b_{2})} }  \longrightarrow  H_{4}(I^{ \bar{n} }\mathcal{X})
		\longrightarrow \overbrace{ H_{3}(\cap) }^{\cong \mathbb{Q}^{3f_{1}-f_{2}-6 } } \longrightarrow  \overbrace{H_{3}(M)}^{\cong \mathbb{Q}^{3f_{1}-f_{2}-6-b_{1}} } \\&  \longrightarrow H_{3}(I^{ \bar{n} }\mathcal{X}) \longrightarrow \overbrace{ H_{2}(\cap) }^{\cong \mathbb{Q}^{3f_{1}-f_{2}-6 } } \longrightarrow   \overbrace{H_{2}(M)}^{\cong \mathbb{Q}^{f_{1}-4} }  \longrightarrow H_{2}(I^{ \bar{n} }\mathcal{X}). 
	\end{align*}
	
	Note that for a non-singular connected component of $S^{1}_{i} \subset \mathcal{X}_{1}$, we have $f_{1}^{i}=3$. Using the well-known Euler formula for 3-dimensional polytopes implies $\sum_{i=1}^{m}(f_{1}^{i}-3)=	\sum_{i=1}^{f_{2}-f_{1}+2}(f_{1}^{i}-3)$. Counting the 1-dimensional faces of the underlying polytope of the link of $S^{1}_{i}$, $\mathcal{P}_{i}$, for each connected component $S^{1}_{i}$ equals counting the neighboring 1-dimensional faces of each vertex in the underlying polytope of $\mathcal{X}$, $\mathcal{M}$. Since each 1-dimensional face of the polytope $\mathcal{M}$ has precisely two vertices in its topological boundary, we have $\sum_{i=1}^{f_{2}-f_{1}+2}f_{1}^{i}=2 f_{2}$. Thus, we get $\sum_{i=1}^{f_{2}-f_{1}+2}(f_{1}^{i}-3)=2f_{2}-3(2-f_{1}+f_{2})=3 f_{1}-f_{2} -6$. Since $\mathcal{L}$ is rationally 3-segmented, we have
	\begin{align*}
		H_{i}(\partial M) \cong H_{i}(S^{1} \times \mathcal{L}_{<3} ) \quad \text{for} \quad i < 3.
	\end{align*}
	The definition yields $H_{i}(M)\cong H_{i}(\mathcal{U})$. Consider the above Mayer-Vietoris exact sequence and the exact sequence of the relative homology of the pair $( M ,\partial M)$ in the following diagram:
	\begin{align}\label{5lemma}
		\xymatrix{
			H_{3}( \partial M)  \ar[r] & H_{3}(M)  \ar[r] & H_{3}(M, \partial M) \ar[r] & H_{2}(M) \ar[r]  & H_{2}(\partial M)\\
			H_{3}(\cap) \ar[r] \ar[u] & H_{3}(M) \ar[r] \ar[u] & H_{3}(I^{ \bar{n} }\mathcal{X}) \ar[u] \ar[r] & H_{2}(M) \ar[u] \ar[r]& H_{2}( \cap). \ar[u] 
		}
	\end{align}
	The commutativity of the diagram
	\begin{align*}
		\xymatrix{ \partial M  \ar@{^{(}->}[r] & M \\
			\cap  \ar@{^{(}->}[u]  \ar@{^{(}->}[ru]   & \\}
	\end{align*}
	ensures that the outer left and right squares in Diagram \ref{5lemma} commute. From \cite[Lemma 2.46]{banagl2010intersection}, it follows that there is a homomorphism $H_{3}(I^{ \bar{n} }\mathcal{X}) \longrightarrow H_{3}(M, \partial M)$ that makes Diagram \ref{5lemma} commutative. Using the 5-lemma yields $b_{3}^{I\mathcal{X}}=b_{3}^{( M ,\partial M)}=b_{4}^{M}$, whereas, in the last equality, we used Lefschetz duality. By using the duality of Betti numbers of intersection spaces and Lemma \ref{MHOM}, we get $b_{4}^{M}=b_{3}^{I\mathcal{X}}=b_{4}^{I\mathcal{X}}=3f_{1}-f_{2}-6-(b_{1}+b_{2})$. Employing the exactness at $H_{3}(\cap) \longrightarrow H_{3}(M)$ and considering that $0 \leq b_{1}$, it immediately follows that $b_{1}=0$, where $b_{1}$ is defined in Equation (\ref{bp1}). Finally, we get $b_{2}^{I\mathcal{X}}=f_{2}-2-b_{2}$.
	
\end{proof}

We initially defined $b_{2}$ using boundary operators. Primarily, forming boundary operators requires a CW structure on the link $\mathcal{X}$. The following lemma aims to give an algebraic description of $b_{2}$. With this at hand, we can present the second main theorem of this section.

\begin{lemma}\label{b2alg}
	Let $\mathcal{X}$ be the link of an isolated singularity in an 8-dimensional toric variety, and let $(M, \partial M)$ be the $\mathbb{Q}$-manifold obtained by cutting out the singularities of $\mathcal{X}$. Let $b_2$ be the parameter defined in Equation (\ref{bp2}), and let $i: \partial M \xhookrightarrow[]{ \;\;\;\; } M$ be the inclusion. Then, we have
	\begin{align*}
		b_{2} = \operatorname{rk}(\operatorname{ker}(H_{3}(\partial M) \xrightarrow{ \; \;  i_{\ast}  \; \; } H_{3} (M)  )).
	\end{align*}
\end{lemma}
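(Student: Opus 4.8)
The plan is to reverse-engineer the definition of $b_2$ through the chain of equalities established in Section 3 and Theorem \ref{IntHom}, and then to repackage the result as the rank of $\ker(i_\ast)$ via the long exact sequence of the pair $(M,\partial M)$ together with Lefschetz duality. Concretely, from the proof of Theorem \ref{IntHom} we already know $b_1 = 0$, so Equation (\ref{bp2}) reads $-b' = f_2 - 2 - b_2$, and we have $b_3^{I\mathcal{X}} = b_3^{(M,\partial M)} = b_4^M = 3f_1 - f_2 - 6 - b_2$. The first step is therefore to fix which homological quantity ``morally'' carries $b_2$: I would argue that $b_2$ measures the failure of $H_3(\partial M) \to H_3(M)$ to be injective, i.e. the part of the boundary 3-cycles that bound in $M$.

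The key computation is to run the long exact sequence of the pair $(M, \partial M)$ in degree $3$,
\begin{align*}
	H_3(\partial M) \xrightarrow{\; i_\ast \;} H_3(M) \xrightarrow{\; j_\ast \;} H_3(M, \partial M) \xrightarrow{\; \partial \;} H_2(\partial M) \xrightarrow{\;\;} H_2(M),
\end{align*}
and combine the rank count here with the known Betti numbers. From Lemma \ref{MHOM} we have $b_3^M = 3f_1 - f_2 - 6$ (using $b_1 = 0$) and $b_2^M = f_1 - 4$, and Lefschetz duality gives $b_3^{(M,\partial M)} = b_4^M = 3f_1 - f_2 - 6 - b_2$. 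I would also need $H_3(\partial M)$ and $H_2(\partial M)$: since $\partial M = \bigsqcup_i S^1_i \times \mathcal{L}_i$ with each $\mathcal{L}_i$ a rational homology $5$-manifold of the type in the intermediate Lemma ($b_3^{\mathcal{L}} = b_2^{\mathcal{L}} = f_1^{\mathcal{L}} - 3$, other low Betti numbers $0,1$), the Künneth formula gives $b_3(\partial M) = \sum_i (f_1^i - 3) + \sum_i (f_1^i - 3) $-type contributions that I would tabulate, using the combinatorial identity $\sum_i (f_1^i - 3) = 3f_1 - f_2 - 6$ already proven in Theorem \ref{IntHom}. Feeding these into the exact sequence, exactness at $H_3(M)$ gives $\operatorname{rk}(\operatorname{im}\, i_\ast) = b_3^M - \operatorname{rk}(\operatorname{im}\, j_\ast)$, and exactness at $H_3(M,\partial M)$ relates $\operatorname{rk}(\operatorname{im}\, j_\ast)$ to $b_3^{(M,\partial M)}$ and $\operatorname{rk}(\ker \partial)$; chasing these and using $\operatorname{rk}(\ker i_\ast) = b_3(\partial M) - \operatorname{rk}(\operatorname{im}\, i_\ast)$ should collapse everything to $\operatorname{rk}(\ker i_\ast) = b_2$.

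I expect the main obstacle to be the bookkeeping at the two ``ends'' of the exact sequence: controlling $\operatorname{rk}(\operatorname{im}(H_3(M,\partial M) \to H_2(\partial M)))$ and $\operatorname{rk}(\operatorname{im}(H_2(\partial M) \to H_2(M)))$ cleanly enough that they cancel. For this I would invoke Lefschetz duality once more, which identifies the connecting map $H_3(M,\partial M) \to H_2(\partial M)$ up to the duality isomorphism with the dual of $i_\ast: H_4(\partial M) \to H_4(M)$ (or a map in a complementary degree), so that its rank is again governed by an $i_\ast$-kernel/cokernel in a degree where the Betti numbers are fully known. An alternative, possibly cleaner route that avoids the full diagram chase: use the isomorphism $\widetilde H_j(M/\partial M) \cong \widetilde H_j(M,\partial M)$ and $H_j(\mathcal{X}) \cong H_j(M/\partial M)$ for $j \geq 3$ from Lemma \ref{MHOM}, so that $b_3^{(M,\partial M)} = \operatorname{rk} H_3(\mathcal{X}) = 3f_1 - f_2 - 6 - (b_1 + b_2) = 3f_1 - f_2 - 6 - b_2$; comparing this with $b_3^M = 3f_1 - f_2 - 6$ and reading off from the exact sequence that $j_\ast: H_3(M) \to H_3(M,\partial M)$ drops rank by exactly $b_2$ forces $\operatorname{rk}(\operatorname{im}\, i_\ast) = b_2$ directly, and then one only needs that $i_\ast: H_3(\partial M) \to H_3(M)$ is surjective — which I would verify from the explicit CW structure (all $H_3$ classes of $M$ already appear in $\partial M$, since coning off $\partial M$ only kills classes, matching $b_3^M \leq b_3(\partial M)$). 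Granting surjectivity, $\operatorname{rk}(\ker i_\ast) = b_3(\partial M) - b_3^M$, and it remains to check $b_3(\partial M) - b_3^M = b_2$, i.e. $b_3(\partial M) = 3f_1 - f_2 - 6 + b_2$, which I would confirm by the Künneth count above. The surjectivity of $i_\ast$ in degree $3$ is the one genuinely geometric input and thus the crux of the argument.
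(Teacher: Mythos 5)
The paper proves this by one application of the long exact sequence of the pair $(M,\partial M)$, but shifted one degree up from where you work: it annotates the segment
\[
0 \to H_4(M) \to H_4(M,\partial M) \to H_3(\partial M) \xrightarrow{i_*} H_3(M) \to H_3(M,\partial M),
\]
uses Lefschetz duality together with $H_i(\mathcal{X})\cong H_i(M,\partial M)$ for $i\ge 3$ (Eq.~(\ref{MXhomology})) to pin down $\operatorname{rk} H_4(M)=3f_1-f_2-6-b_2$ and $\operatorname{rk} H_4(M,\partial M)=3f_1-f_2-6$, and then reads off $\operatorname{rk}\ker i_* = b_4^{(M,\partial M)} - b_4^M = b_2$ directly from exactness. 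The key point is the leading ``$0\to$'', i.e.\ the injectivity of $H_4(M)\to H_4(M,\partial M)$ (equivalently, that $H_4(\partial M)\to H_4(M)$ has zero rank); with that, one never needs $b_3(\partial M)$ or $b_3(M)$ at all, so no Künneth computation on $\partial M$ enters.

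Your proposal instead chases the sequence starting at $H_3(\partial M)$ and wants the \emph{surjectivity} of $i_*:H_3(\partial M)\to H_3(M)$ as its ``one genuinely geometric input.'' That is a different assertion from the paper's (which is injectivity one degree higher), and your ``alternative, cleaner route'' is internally inconsistent: you assert both that $\operatorname{rk}(\operatorname{im} i_*)=b_2$ (from the rank drop of $j_*$) and that $i_*$ is surjective, which would force $b_2=b_3^M=3f_1-f_2-6$, a combinatorial identity that cannot hold in general and would contradict the theme of the paper. The heuristic that ``coning off $\partial M$ only kills classes, so $i_*$ must be onto'' is also not valid, since the connecting map $H_3(M,\partial M)\to H_2(\partial M)$ can carry nonzero rank.

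There is also a concrete gap in the bookkeeping you defer. You claim the Künneth computation for $\partial M=\bigsqcup_i S^1\times\mathcal{L}_i$ will yield $b_3(\partial M)=3f_1-f_2-6+b_2$, but the formula gives $b_3(S^1\times\mathcal{L}_i)=b_3(\mathcal{L}_i)+b_2(\mathcal{L}_i)=2(f_1^i-3)$, hence $b_3(\partial M)=2\sum_i(f_1^i-3)=2(3f_1-f_2-6)$, which is not what you need for the surjectivity route to close. So your main chase would not ``collapse to $b_2$'' with the stated inputs. The paper's route avoids this entirely by exploiting the known ranks of $H_4(M)$ and $H_4(M,\partial M)$ and an injectivity claim in degree $4$; if you want to salvage your approach, you must either prove the degree-$4$ injectivity (as the paper asserts, implicitly) or redo the degree-$3$ rank chase with the correct $b_3(\partial M)$ and a justified estimate for $\operatorname{rk}\bigl(\operatorname{im}(H_3(M,\partial M)\to H_2(\partial M))\bigr)$, rather than assuming $i_*$ is onto.
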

\begin{proof}
 We consider the long exact sequence of the relative homology of $(M, \partial M)$. Using rational Lefschetz duality and Equation (\ref{MXhomology}), we obtain
	\begin{align*}
		0 \longrightarrow \overbrace{H_{4}(M)}^{3 f_{1} - f_{2}-6-b_{2} } \longrightarrow \overbrace{ H_{4}(M, \partial M)  }^{3 f_{1} -f_{2} - 6  } \longrightarrow \overbrace{H_{3} (\partial M) }^{3 f_{1} -f_{2} -6 } \longrightarrow \overbrace{H_{3}(M) }^{3f_{1} -f_{2}-6 }  \longrightarrow \overbrace{H_{3}(M, \partial M)}^{3f_{1} -f_{2}-6- b_{2} }.
	\end{align*}
	By employing the exactness of the above sequence, we arrive at
	$b_{2} = \operatorname{rk}(\operatorname{ker}(H_{3}(\partial M) \longrightarrow H_{3} (M)))$.
\end{proof}

\begin{theorem}\label{MainTheo2}
	Let $\mathcal{X}$ be the link of an isolated singularity in an 8-dimensional toric variety. Let $(M , \partial M)$ be the $\mathbb{Q}$-manifold obtained by cutting out all 1-dimensional singularities of $\mathcal{X}$, and let $i: \partial M \xhookrightarrow[]{ \;\;\;\; } M$ be the inclusion map. Then, the Betti numbers of $\mathcal{X}$ are
	\begin{align*}
		b^{\mathcal{X}}_{7}=1, \;	b^{\mathcal{X}}_{6}=0, \;	&b^{\mathcal{X}}_{5}=f_{1}-4,\;	b^{\mathcal{X}}_{4}=3f_{1}-f_{2}-6, \;
		b^{\mathcal{X}}_{3}=3f_{1}-f_{2}-6-b_{2}, \\	&b^{\mathcal{X}}_{2}=f_{1}-4-b_{2}, \;	b^{\mathcal{X}}_{1}=0, \; b^{\mathcal{X}}_{0}=1,
	\end{align*}
	where 	$b_{2} = \operatorname{rk}(\operatorname{ker}(H_{3}(\partial M) \xrightarrow{ \; \;  i_{\ast}  \; \; } H_{3} (M)  ))$.
\end{theorem}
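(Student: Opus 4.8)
The plan is to obtain this theorem as an assembly of three results already in hand: Proposition \ref{Hom1Ver}, the vanishing of the parameter $b_1$, and Lemma \ref{b2alg}. Proposition \ref{Hom1Ver} already records $\operatorname{rk} H_k(\mathcal{X};\mathbb{Q})$ for all $k$ in terms of the two a priori non-combinatorial parameters $b_1$ and $b_2$ extracted from the boundary operators $\partial_4$ and $\partial_3$ of the cellular chain complex $\mathcal{C}_*(\mathcal{X})$. Since $b^{\mathcal{X}}_k$ is by definition $\operatorname{rk} H_k(\mathcal{X};\mathbb{Q})$, all that remains is to (i) eliminate $b_1$ from the formulas and (ii) replace the combinatorial definition of $b_2$ by its homological description, after which the displayed values follow by direct substitution.

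For step (i) I would invoke the vanishing $b_1=0$, which is established inside the proof of Theorem \ref{IntHom}: applying Theorem \ref{IX} to $\mathcal{X}$ (legitimate since Proposition \ref{LinkSeg} guarantees the relevant links are rationally $3$-segmented), one sets $\mathcal{U}\simeq M$, $\mathcal{V}\simeq \mathcal{C}(\sqcup_i S^1\times(\mathcal{L}_i)_{<k})$, so that $\mathcal{U}\cap\mathcal{V}\simeq \sqcup_i S^1\times(\mathcal{L}_i)_{<k}$, and compares the Mayer--Vietoris sequence of this cover with the long exact sequence of the pair $(M,\partial M)$ through Diagram (\ref{5lemma}); the $5$-lemma together with rational Lefschetz duality for $(M,\partial M)$ and rational Poincaré duality for $\partial M$ (both from \cite{banagl2024link}) give $b_4^M=b_3^{I\mathcal{X}}=b_4^{I\mathcal{X}}=3f_1-f_2-6-(b_1+b_2)$, and comparison with Lemma \ref{MHOM} plus exactness at $H_3(\mathcal{U}\cap\mathcal{V})\to H_3(M)$, using $b_1\geq 0$, forces $b_1=0$. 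Substituting $b_1=0$ into the formulas of Proposition \ref{Hom1Ver} yields $b^{\mathcal{X}}_4=3f_1-f_2-6$ and $b^{\mathcal{X}}_3=3f_1-f_2-6-b_2$, with the remaining entries unchanged.

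For step (ii) I would apply Lemma \ref{b2alg}: although $b_2$ was introduced via Equation (\ref{bp2}) in terms of a submatrix rank of $\partial_3$, the long exact homology sequence of the pair $(M,\partial M)$, combined with rational Lefschetz duality and the isomorphism $H_i(\mathcal{X})\cong H_i(M/\partial M)$ for $i\geq 3$ from Equation (\ref{MXhomology}), identifies $b_2=\operatorname{rk}(\operatorname{ker}(i_{\ast}\colon H_3(\partial M)\to H_3(M)))$. Inserting this into the formulas completes the statement. I do not expect a genuine obstacle in the present proof: the real content — establishing $b_1=0$ — was carried out in Theorem \ref{IntHom}, and the only point needing care is that the comparison of the two exact sequences in Diagram (\ref{5lemma}) is truly commutative, which rests on \cite[Lemma 2.46]{banagl2010intersection} providing the middle vertical map $H_3(I^{\bar{n}}\mathcal{X})\to H_3(M,\partial M)$; granting that, the theorem is a bookkeeping consequence of the cited results.
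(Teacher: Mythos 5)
Your proposal is correct and follows exactly the paper's own route: the paper's proof of Theorem~\ref{MainTheo2} is precisely the two-line assembly of Proposition~\ref{Hom1Ver}, the vanishing $b_1=0$ established inside the proof of Theorem~\ref{IntHom}, and the algebraic identification of $b_2$ from Lemma~\ref{b2alg}. Your expanded recapitulation of the Mayer--Vietoris and Lefschetz-duality arguments that force $b_1=0$ is accurate but is content already discharged in the cited theorem, so the proof as you state it is complete.
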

\begin{proof}
	In the proof of Theorem \ref{IntHom}, we showed that $b_{1}=0$. Consequently, the claim follows from Proposition \ref{Hom1Ver} and Lemma \ref{b2alg}. 
\end{proof}

\section{$S^{1}$-bundle and homotopy groups of the links}

Consider the stratification on a toric variety $ X_{\Sigma} $, associated with a complete fan $ \Sigma $, induced by a moment map, as introduced in \cite{banagl2024link}. The links of the strata are odd-dimensional topological pseudomanifolds and, therefore, cannot be studied through the lens of algebraic geometry. However, we use the methods introduced in \cite[Chapter 3.3]{cox2024toric} to show that, for a given link, there is a fiber bundle with fiber $\mathbb{C}^{\ast}$, where the total space and the base space are toric varieties. A restriction of this fiber bundle induces a fiber bundle over the same base space, with the given link as the total space and $S^{1}$ as the fiber. For a detailed introduction to fiber bundles, the reader may consult \cite{husemoller1966fibre}.

To be able to study morphisms between toric varieties, we need the following definition.
\begin{definition}\label{DefComp}
	Let $ N_1 $, $ N_2 $ be two lattices with $ \Sigma_1 $ a fan in $ (N_1)_\mathbb{R} $ and $ \Sigma_2 $ a fan in $ (N_2)_\mathbb{R} $. A $ \mathbb{Z} $-linear mapping $ \overline{\phi} : N_1 \longrightarrow N_2 $ is \textbf{compatible} with the fans $ \Sigma_1 $ and $ \Sigma_2 $ if for every cone $ \sigma_1 \in \Sigma_1 $, there exists a cone $ \sigma_2 \in \Sigma_2 $ such that $ \overline{\phi}_\mathbb{R}(\sigma_1) \subseteq \sigma_2 $.
\end{definition}
A compatible $\mathbb{Z}$-linear map $ \overline{\phi}: N_1 \to N_2 $, in the sense described above, induces a morphism between toric varieties $ \phi: X_{\Sigma_1} \longrightarrow X_{\Sigma_2} $. Let $ \nu \in X_{\Sigma} $ be an isolated or singular point in the toric variety $ X_{\Sigma} $, associated with the complete fan $ \Sigma \subseteq N $. Let $ \Sigma_{\nu} $ be the dual cone to $ \nu $ in $ N $. One can extend the topological construction of compact toric varieties to the case where the underlying fan is not complete. We will not carry out this construction in full detail here but will instead provide an overview of how it works, focusing at least on the case of a fan consisting of a single proper cone. Reversing the inclusions and the dimensional grading of the faces of $\Sigma_{\nu}$ results in an abstract object $\mathcal{P}_{\nu}$ that can be geometrically realized as an open cone over a $(\dim(\Sigma_{\nu}) - 1)$-dimensional polytope $\mathcal{M}_{\mathcal{L}_{\nu}}$. The polytope $\mathcal{M}_{\mathcal{L}_{\nu}}$ is the underlying polytope of the link of the point $\nu $, denoted by $\mathcal{L}_{\nu}$. The point $\nu \in X_{\Sigma_{\nu}} \subset X_{\Sigma}$, considered as an element of $\mathcal{P}_{\nu}$, is dual to the interior of $\Sigma_{\nu}$. One can show that $ X_{\Sigma_{\nu}} \cong \mathring{\mathcal{C}}(\mathcal{L}_{\nu}) $, where $ \mathring{\mathcal{C}} $ denotes the open cone over the link $ \mathcal{L}_{\nu} $. Let $ \mathring{\Sigma}_{\nu} $ be the fan obtained from $ \Sigma_{\nu} $ by removing its interior. Hence, we have $ X_{\mathring{\Sigma}_{\nu}} \cong \mathring{\mathcal{C}}(\mathcal{L}_{\nu}) \setminus \nu \cong \mathcal{L}_{\nu} \times \mathbb{R} $. Let $ N_0 \subset N $ be the sublattice spanned by a 1-dimensional cone that lies in the interior of $ \Sigma_{\nu} $. Let also $ (N')_{\mathbb{R}} = (N_0)_{\mathbb{R}}^{\perp} $. Consider the natural projection $ \pi^{\prime}: N \longrightarrow N' $, and define $ \Sigma'_{\nu} = \pi(\mathring{\Sigma}_{\nu}) $. The toric variety $ X_{\Sigma'_{\nu}} $ is compact since $ \Sigma'_{\nu} \subseteq N' $ is a complete fan. Note that $ \Sigma_0 = \{ \sigma \in \mathring{\Sigma}_{\nu} \mid \sigma \subseteq (N_0)_{\mathbb{R}} \} = \{ 0 \} $. We have the following short exact sequence:
\begin{align*}\label{LongEx}
0 \rightarrow N_0 \hookrightarrow N \xrightarrow{\pi} N' \rightarrow 0.
\end{align*}

\begin{definition}
	 In the situation of Definition \ref{DefComp}, we say $\Sigma$ is split by $\Sigma^{\prime}$ and $\Sigma_0$ if there exists a subfan $\hat{\Sigma} \subseteq \Sigma$ such that:
	\begin{itemize}
\item[(a)] $\overline{\phi}_\mathbb{R}$ maps each cone $\hat{\sigma} \in \hat{\Sigma}$ bijectively to a cone $\sigma^{\prime} \in \Sigma^{\prime}$ such that $\overline{\phi}( \hat{ \sigma } \cap N) = \sigma^{\prime} \cap N^{\prime}$. Furthermore, the map $\hat{\sigma} \mapsto \sigma$ defines a bijection $\Sigma^{\prime} \to \hat{\Sigma}$. 

\item[(b)] Given cones $\hat{\sigma} \in \hat{\Sigma}$ and $\sigma_0 \in \Sigma_0$, the sum $\hat{\sigma} + \sigma_0$ lies in $\Sigma$, and every cone of $\Sigma$ arises this way.
	\end{itemize}
\end{definition}
With the above notation, we immediately arrive at the following lemma: 
\begin{lemma}
	The fan $\mathring{\Sigma}_{\nu}$ is split by $\Sigma_{\nu}^{\prime}$ and $\Sigma_0= \{ 0\}$.
\end{lemma}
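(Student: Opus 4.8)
The plan is to take $\hat{\Sigma} := \mathring{\Sigma}_{\nu}$ itself as the subfan required by the definition of ``split'', where $\overline{\phi}\colon N \to N'$ denotes the projection with kernel $N_0$, and to verify conditions (a) and (b) for it. Condition (b) is immediate, and in fact forces this choice: since $\Sigma_0 = \{0\}$ its only element is the zero cone, so $\hat{\sigma} + \sigma_0 = \hat{\sigma}$ for every $\hat{\sigma} \in \hat{\Sigma}$; thus $\hat{\sigma} + \sigma_0$ automatically lies in $\mathring{\Sigma}_{\nu}$, and the requirement that every cone of $\mathring{\Sigma}_{\nu}$ arise as some $\hat{\sigma} + \sigma_0$ says exactly that $\hat{\Sigma} = \mathring{\Sigma}_{\nu}$. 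Hence everything reduces to condition (a).

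For (a) I would work in the cone-over-a-polytope model. Since $\Sigma_{\nu}$ is a full-dimensional pointed rational cone, write $\Sigma_{\nu} = \operatorname{cone}(Q)$ for a polytope $Q$ lying in an affine hyperplane $\{\ell_0 = 1\}$ with $\ell_0 \in M_{\mathbb{Q}}$ positive on $\Sigma_{\nu} \setminus \{0\}$; then the faces of $\Sigma_{\nu}$ correspond to the faces of $Q$, with $\{0\}$ and $\Sigma_{\nu}$ matching $\emptyset$ and $Q$. The map $\overline{\phi}$ kills precisely the line $(N_0)_{\mathbb{R}} = \mathbb{R}e_0$, which is transverse to $\{\ell_0 = 1\}$, so $\overline{\phi}_{\mathbb{R}}$ restricts to an affine isomorphism $\{\ell_0 = 1\} \xrightarrow{\cong} N'_{\mathbb{R}}$ carrying $Q$ to a polytope $Q'$ with $\overline{\phi}(e_0) = 0$ in its interior, and $\overline{\phi}_{\mathbb{R}}(\operatorname{cone}(F)) = \operatorname{cone}(\overline{\phi}_{\mathbb{R}}(F))$ for every face $F$ of $Q$. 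From this I would read off three things: first, $\overline{\phi}_{\mathbb{R}}$ is injective on each $\hat{\sigma} \in \mathring{\Sigma}_{\nu}$, because a proper face lies in a supporting hyperplane $\{m = 0\}$ with $m \in M$, $\Sigma_{\nu} \subseteq \{m \geq 0\}$, and $m(e_0) > 0$ since $e_0$ is interior, so $\operatorname{span}(\hat{\sigma}) \cap \mathbb{R}e_0 = 0$; second, $\Sigma'_{\nu} = \overline{\phi}(\mathring{\Sigma}_{\nu})$ is precisely the face fan of $Q'$, which is complete (recovering the completeness asserted earlier); third, $\hat{\sigma} \mapsto \overline{\phi}(\hat{\sigma})$ is a bijection $\mathring{\Sigma}_{\nu} \to \Sigma'_{\nu}$, being the composite of the face correspondences $\mathring{\Sigma}_{\nu} \leftrightarrow \{\text{faces of } Q\} \leftrightarrow \{\text{faces of } Q'\} \leftrightarrow \Sigma'_{\nu}$, the last of which is the standard bijection between faces of a polytope with $0$ in its interior and cones of its face fan.

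The substantive part of (a) is the lattice compatibility $\overline{\phi}(\hat{\sigma} \cap N) = \overline{\phi}(\hat{\sigma}) \cap N'$. I would reduce it to the facets of $\Sigma_{\nu}$: writing $N_{\hat{\sigma}} := \operatorname{span}(\hat{\sigma}) \cap N$, the condition is equivalent to $\overline{\phi}(N_{\hat{\sigma}}) = \overline{\phi}_{\mathbb{R}}(\operatorname{span}\hat{\sigma}) \cap N'$ (injectivity of $\overline{\phi}_{\mathbb{R}}$ on $\operatorname{span}\hat{\sigma}$ then upgrades this to the statement for $\hat{\sigma}$ itself), and since every face of $\Sigma_{\nu}$ is the intersection of the facets containing it, it suffices to treat a facet $\hat{\sigma}$ with $\operatorname{span}(\hat{\sigma}) = \{m = 0\}$, $m \in M$ primitive. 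There $N/N_{\hat{\sigma}} \cong \mathbb{Z}$ is realised by $m$ and $[\,N : N_{\hat{\sigma}} + \mathbb{Z}e_0\,] = m(e_0)$, so $\overline{\phi}|_{N_{\hat{\sigma}}}$ is surjective onto $N'$ exactly when $m(e_0) = 1$; granting $m_i(e_0) = 1$ for all facet normals $m_i$ of $\Sigma_{\nu}$, the case of a general face follows at once, since for $x = v + t e_0 \in N \cap (\operatorname{span}\hat{\sigma} \oplus \mathbb{R}e_0)$ one gets $t = m_i(x) \in \mathbb{Z}$ from any facet $\{m_i = 0\} \supseteq \operatorname{span}(\hat{\sigma})$, whence $v \in N_{\hat{\sigma}}$.

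The main obstacle is therefore purely lattice-theoretic: one must choose the interior $1$-dimensional cone $N_0 = \mathbb{Z}e_0$ so that its primitive generator $e_0$ satisfies $\langle m_i , e_0 \rangle = 1$ for every ray generator $m_i$ of the dual cone $\Sigma_{\nu}^{\vee}$ (a Gorenstein-type condition on $\Sigma_{\nu}^{\vee}$). This is precisely the freedom in the construction alluded to in the introduction (the non-uniqueness of $\Sigma$), and verifying that an admissible $e_0 \in \operatorname{int}(\Sigma_{\nu}) \cap N$ can always be selected in the situation at hand is the step I expect to require the most care, since for a general pointed rational cone no such $e_0$ need exist; once such an $e_0$ is fixed, the three items of the second paragraph together with the index computation complete the verification of (a), and hence the lemma.
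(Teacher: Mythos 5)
Your plan is, as far as it goes, more careful than the paper's own treatment, which offers no proof at all and merely asserts that the lemma follows ``immediately'' from the preceding discussion. You correctly observe that $\hat{\Sigma}=\mathring{\Sigma}_{\nu}$ is forced, that condition (b) is vacuous when $\Sigma_0=\{0\}$, that $\overline{\phi}_{\mathbb{R}}$ is injective on each proper face because $e_0$ lies in $\operatorname{int}(\Sigma_{\nu})$, and that the substantive content of (a) is the lattice condition, which you correctly reduce to the requirement $m_i(e_0)=1$ for every primitive facet normal $m_i$ of $\Sigma_{\nu}$.

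Your worry that this last step ``requires the most care'' is more than justified: the requisite $e_0$ need not exist, and the lemma as stated is false without an additional hypothesis. Take $N=\mathbb{Z}^2$ and $\Sigma_{\nu}=\operatorname{cone}\bigl((1,0),(1,3)\bigr)$; the primitive facet normals are $m_1=(0,1)$ and $m_2=(3,-1)$, and the system $m_1(e_0)=m_2(e_0)=1$ has the unique solution $e_0=(2/3,1)\notin\mathbb{Z}^2$. Hence for every choice of primitive interior $e_0$ some facet $\hat{\sigma}$ satisfies $\overline{\phi}(\hat{\sigma}\cap N)\subsetneq\overline{\phi}_{\mathbb{R}}(\hat{\sigma})\cap N'$; for instance with $e_0=(1,1)$ and $N'\cong\mathbb{Z}$ via $(a,b)\mapsto b-a$, the ray through $(1,3)$ gives $\overline{\phi}(\mathbb{Z}_{\geq 0}(1,3))=2\mathbb{Z}_{\geq 0}\subsetneq\mathbb{Z}_{\geq 0}$. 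The same obstruction persists in every dimension (take a product with a smooth cone), so it is not excluded by restricting to $8$-dimensional toric varieties. Geometrically, $m_i(e_0)$ is the order of the stabiliser of the $T_{N_0}$-action along the divisor dual to $m_i$, so when some $m_i(e_0)>1$ the induced map $\mathcal{L}_{\nu}\to X_{\Sigma'_{\nu}}$ is only a Seifert (orbifold) fibration and not a locally trivial $S^1$-bundle; indeed for cones whose link is a lens space $L(p,q)$ with $q\not\equiv\pm 1\pmod{p}$, the link is not the total space of \emph{any} $S^1$-bundle over a toric variety. In short: the gap you flag is genuine, it cannot be closed in general, and it is a gap in the paper as well, not merely in your attempt. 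The lemma (and Theorem~\ref{HomtGr}) would need either an extra hypothesis guaranteeing an interior lattice point with $m_i(e_0)=1$ for all facets, or a weaker conclusion about Seifert fibrations.
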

\begin{theorem}\label{HomtGr}
Let $ X_{\Sigma} $ be a toric variety associated with the complete fan $ \Sigma $. Consider the stratification induced by a moment map, and let $ \mathcal{L}_{x} $ be the link of the point $ x \in X_{\Sigma} $. Then, there exists a fiber bundle $ \pi: \mathcal{L}_{x} \longrightarrow X_{\Sigma^{\prime}} $ with fiber $ S^{1} $, where $ X_{\Sigma^{\prime}} $ is a compact toric variety. Furthermore, we obtain the fan $ \Sigma^{\prime} $ as follows:

Let $ \Sigma_{x} \in \Sigma $ be the corresponding cone to the orbit in which $ x $ lies. The complete fan $ \Sigma^{\prime} $ is obtained by projecting the topological boundary of $ \Sigma_{x} \subset N := \operatorname{span}(\Sigma_{x}) \cap \mathbb{Z}^{\dim(\Sigma) } \cong \mathbb{Z}^{\dim ( \Sigma_{x} )  }  $ along a 1-dimensional cone $ \tau $ lying in the interior of $ \Sigma_{x} $ into the lattice $N_{0}^{\perp} := \operatorname{span} (\tau^{\perp}) \cap \mathbb{Z}^{\dim ( \Sigma )} \cong \mathbb{Z}^{\dim (\Sigma_{x})-1} $.
\end{theorem}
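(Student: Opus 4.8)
The plan is to realize the link $\mathcal{L}_x$ as a subspace of the toric variety $X_{\mathring{\Sigma}_\nu}$ discussed above and then push the $\mathbb{C}^\ast$-bundle construction of \cite[Chapter 3.3]{cox2024toric} down to the $S^1$-bundle we want. First I would set $N := \operatorname{span}(\Sigma_x) \cap \mathbb{Z}^{\dim(\Sigma)}$ and work inside $X_{\Sigma_x}$, the (non-complete) toric variety attached to the single proper cone $\Sigma_x$; by the discussion preceding the theorem we have $X_{\Sigma_x} \cong \mathring{\mathcal{C}}(\mathcal{L}_x)$ and $X_{\mathring{\Sigma}_x} \cong \mathcal{L}_x \times \mathbb{R}$, so in particular $\mathcal{L}_x$ is a deformation retract of $X_{\mathring{\Sigma}_x}$. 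Next I would choose a primitive lattice vector $v_\tau$ generating a $1$-dimensional cone $\tau$ in the interior of $\Sigma_x$, let $N_0 = \mathbb{Z} v_\tau$, $N' = N_0^\perp \cap N \cong \mathbb{Z}^{\dim(\Sigma_x)-1}$ (after identifying $N/N_0$ with this lattice), and let $\pi: N \to N'$ be the projection along $v_\tau$. The key structural input is the preceding lemma: $\mathring{\Sigma}_x$ is split by $\Sigma'_x := \pi(\mathring{\Sigma}_x)$ and $\Sigma_0 = \{0\}$, and $\Sigma'_x$ is complete because every ray of $\mathring{\Sigma}_x$ projects to a nonzero ray (the rays of $\Sigma_x$ are not parallel to the interior ray $\tau$), the projected cones still tile $N'_\mathbb{R}$, and the splitting hypothesis guarantees the combinatorial fan axioms are preserved.

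The second step is to invoke the toric fibration theorem of \cite[Chapter 3.3]{cox2024toric}: since $\mathring{\Sigma}_x$ is split by $\Sigma'_x$ and $\Sigma_0 = \{0\}$, the compatible map $\pi: N \to N'$ induces a fiber bundle $\Phi: X_{\mathring{\Sigma}_x} \longrightarrow X_{\Sigma'_x}$ whose fiber is the toric variety attached to $\Sigma_0 = \{0\}$ in the rank-$1$ lattice $N_0$, namely $T_{N_0} = \mathbb{C}^\ast$. Here I would note that the local triviality is genuinely a bundle statement — over the affine chart $U_{\sigma'}$ of a maximal cone $\sigma' \in \Sigma'_x$ with lift $\hat\sigma \in \hat\Sigma$, the splitting $\hat\sigma + \Sigma_0$ gives $\Phi^{-1}(U_{\sigma'}) \cong U_{\sigma'} \times \mathbb{C}^\ast$ — and these trivializations patch over the torus because $N \cong N' \times N_0$ as lattices (the exact sequence $0 \to N_0 \to N \to N' \to 0$ is split by condition (a) of the splitting).

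The third and final step is to restrict the $\mathbb{C}^\ast$-bundle to the unit-circle subbundle and identify the total space with $\mathcal{L}_x$. Concretely, $\mathbb{C}^\ast \cong S^1 \times \mathbb{R}_{>0}$, and the $\mathbb{R}_{>0}$-direction corresponds exactly to the cone coordinate in $\mathring{\mathcal{C}}(\mathcal{L}_x)$; collapsing it (equivalently, taking the obvious deformation retraction of $X_{\mathring{\Sigma}_x} \cong \mathcal{L}_x \times \mathbb{R}$ fiberwise over $X_{\Sigma'_x}$) yields a sub-fiber-bundle $\pi: \mathcal{L}_x \to X_{\Sigma'_x}$ with fiber $S^1$, and $X_{\Sigma'_x}$ is compact since $\Sigma'_x$ is a complete fan. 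Setting $\Sigma' := \Sigma'_x$ and spelling out that $\Sigma'_x = \pi(\partial \Sigma_x)$ lands in the lattice $N_0^\perp$ as stated gives the explicit description in the theorem. I expect the main obstacle to be the second step: carefully justifying that the Cox-style construction really produces a \emph{locally trivial} bundle rather than merely a toric morphism with the right fibers, which requires checking the compatibility of the affine-chart trivializations on overlaps and verifying that the non-completeness of $\mathring{\Sigma}_x$ (a fan that is not the full set of cones of any polytope) does not break the quotient-construction argument of \cite{cox2024toric}. A secondary subtlety is confirming that $\Sigma'_x$ is genuinely complete — i.e. that no ray of $\Sigma_x$ other than $\pm\tau$ projects to $0$ and that the union of projected cones is all of $N'_\mathbb{R}$ — which uses that $\tau$ is interior to the top-dimensional cone $\Sigma_x$.
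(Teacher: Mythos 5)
Your proposal follows essentially the same route as the paper's proof: identify $X_{\mathring{\Sigma}_x}\cong\mathcal{L}_x\times\mathbb{R}$, observe that the split short exact sequence $0\to N_0\to N\to N'\to 0$ together with the splitting of $\mathring{\Sigma}_x$ by $\Sigma'_x$ and $\Sigma_0=\{0\}$ lets one invoke the toric fibration theorem of Cox--Little--Schenck to obtain a $\mathbb{C}^\ast$-bundle $X_{\mathring{\Sigma}_x}\to X_{\Sigma'_x}$, then pass from $\mathbb{C}^\ast\cong S^1\times\mathbb{R}_{>0}$ to the $S^1$-subbundle $\mathcal{L}_x\to X_{\Sigma'_x}$. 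Your elaborations on why $\Sigma'_x$ is complete (interiority of $\tau$ ensures no boundary ray collapses) and on how the affine-chart trivializations patch are correct and helpful additions, but the decomposition, the key lemma, and the reduction from $\mathbb{C}^\ast$ to $S^1$ are the same as in the paper.
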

\begin{proof}
   	First, we prove the statement for a point in the bottom stratum of $X_{\Sigma}$. We use the same notation as in the above discussion. Using the short exact sequence \ref{LongEx} and \cite[Theorem 3.3.19]{cox2024toric}, we conclude that $X_{\mathring{\Sigma}_{\nu}}$ is a fiber bundle over $X_{\Sigma^{\prime}_{\nu} }$ with fiber $X_{\Sigma_{0}}$, where $\mathring{\Sigma}_{\nu} \subseteq N$, $\Sigma^{\prime}_{\nu} \subset N^{\prime}$, and $\Sigma_{0}\subseteq N_{0}$.
Thus, we arrive at the following fiber bundle:
\begin{align}
	\mathbb{R} \times S^{1} \xrightarrow{g=(\operatorname{id}_{\mathbb{R}},\overline{g})} \mathbb{R} \times \mathcal{L}_{\nu} \xrightarrow{f=(\operatorname{pr},\overline{f})} \ast \times X_{\Sigma^{\prime}_{\nu} },
\end{align}
where the maps $\operatorname{id}_{\mathbb{R}}$ and $\operatorname{pr}$ denote the identity map and the projection to a point, respectively. It follows that 
\begin{align}
 S^{1} \xrightarrow{\overline{g}} \mathcal{L}_{\nu} \xrightarrow{\overline{f}}   X_{\Sigma^{\prime}_{\nu} }
\end{align}
is also a fiber bundle. This completes the proof for the case where $ x $ lies in the bottom stratum of $ X_{\Sigma} $. For an arbitrary point $ x $, let $ \Sigma_{x} \in \Sigma $ be the corresponding cone to the orbit in which $ x $ lies. Consider $ \Sigma_{x} $ as a cone in the sublattice $ N := \operatorname{span}(\Sigma_{x}) \cap \mathbb{Z}^{\dim(\Sigma) } \cong \mathbb{Z}^{\dim ( \Sigma_{x} )  } $. Hence, we arrive at $X_{\Sigma_{x} } \cong \mathring{\mathcal{C}} (\mathcal{L}_{x})$, where $\Sigma_{x} \subset N$. Let $\mathring{\Sigma}_{x} \subset N$ be the fan obtained from $\Sigma_{x}$ by removing its interior. It follows that $X_{\mathring{\Sigma}_{x}} \cong \mathcal{L}_{x} \times \mathbb{R}$. The fan $\mathring{\Sigma}_{x}$ splits by $\Sigma_{0} = \{ 0\}$ and $\Sigma^{\prime}_{x} := \pi^{\prime} (\mathring{\Sigma}_{x})$. The map $\pi^{\prime} : N \longrightarrow  ( \sfrac{N_{\mathbb{R}}}{ (N_{0})_{\mathbb{R}} } ) \cap N $ is defined where $N_0 \subset N$ is the lattice spanned by a 1-dimensional cone lying in the interior of $\Sigma_{x}$. The rest of the proof follows the same lines as in the case of the points in the bottom stratum.
\end{proof}

\begin{corollary}
	In the situation of Theorem \ref{HomtGr}, we have the exact sequence
	$
		0 \longrightarrow \pi_{2}(\mathcal{L}_{x}) \longrightarrow \pi_{2}(X_{\Sigma^{\prime} }) \longrightarrow \mathbb{Z} \longrightarrow \pi_{1}( \mathcal{L}_{x} ) \longrightarrow \pi_{1}(X_{\Sigma^{\prime}}) \longrightarrow 0,
	$
and $ \pi_ {i}(\mathcal{L}_{x} ) \cong \pi_{i}(X_{\Sigma^{\prime} })$ for $ i \geq 3$.
\end{corollary}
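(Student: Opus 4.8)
The plan is to apply the long exact sequence of homotopy groups associated with the $S^{1}$-fibration $S^{1}\hookrightarrow \mathcal{L}_{x}\xrightarrow{\pi} X_{\Sigma'}$ produced by Theorem \ref{HomtGr}. First I would observe that every fiber bundle is a Serre fibration (the homotopy lifting property with respect to disks is local on the base and holds for trivial bundles), and in fact a fiber bundle over a paracompact Hausdorff base is a Hurewicz fibration; since in this section all spaces are Hausdorff and paracompact and $X_{\Sigma'}$ is a compact toric variety, the bundle $\pi:\mathcal{L}_{x}\to X_{\Sigma'}$ satisfies these hypotheses. Hence the long exact sequence
\begin{align*}
\cdots \longrightarrow \pi_{n}(S^{1}) \longrightarrow \pi_{n}(\mathcal{L}_{x}) \longrightarrow \pi_{n}(X_{\Sigma'}) \xrightarrow{\ \partial\ } \pi_{n-1}(S^{1}) \longrightarrow \cdots
\end{align*}
is available.

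Next I would record the homotopy groups of the fiber: $\pi_{1}(S^{1})\cong\mathbb{Z}$, $\pi_{n}(S^{1})=0$ for all $n\geq 2$, and $S^{1}$ is path-connected. Because a complete fan gives a connected toric variety $X_{\Sigma'}$, and $S^{1}$ is connected, the total space $\mathcal{L}_{x}$ is connected as well; so no basepoint pathologies arise and the $\pi_{0}$-terms at the end of the sequence are trivial, which forces $\pi_{1}(\mathcal{L}_{x})\to\pi_{1}(X_{\Sigma'})$ to be surjective.

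Finally I would read off the two assertions. For $i\geq 3$ both $\pi_{i}(S^{1})$ and $\pi_{i-1}(S^{1})$ vanish, so exactness gives the isomorphism $\pi_{i}(\mathcal{L}_{x})\xrightarrow{\cong}\pi_{i}(X_{\Sigma'})$. For the low-degree part, substituting $\pi_{2}(S^{1})=0$, $\pi_{1}(S^{1})=\mathbb{Z}$, together with the surjectivity just noted, the tail of the sequence becomes
\begin{align*}
0 \longrightarrow \pi_{2}(\mathcal{L}_{x}) \longrightarrow \pi_{2}(X_{\Sigma'}) \xrightarrow{\ \partial\ } \mathbb{Z} \longrightarrow \pi_{1}(\mathcal{L}_{x}) \longrightarrow \pi_{1}(X_{\Sigma'}) \longrightarrow 0,
\end{align*}
which is precisely the asserted five-term exact sequence. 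There is no real obstacle here; the only points deserving a remark are the verification that the bundle is a fibration and the connectedness of $X_{\Sigma'}$, both immediate. One could moreover identify the connecting map $\partial:\pi_{2}(X_{\Sigma'})\to\mathbb{Z}$ with evaluation against the Euler class of the $S^{1}$-bundle, linking this corollary to the algebraic description of $b_{2}$ alluded to in the introduction, but this is not required for the statement.
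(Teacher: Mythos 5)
Your proof is correct and uses exactly the same approach as the paper, namely the long exact sequence of homotopy groups for the fiber bundle $S^{1}\hookrightarrow \mathcal{L}_{x}\to X_{\Sigma'}$, specializing via $\pi_{1}(S^{1})\cong\mathbb{Z}$ and $\pi_{n}(S^{1})=0$ for $n\geq 2$. The paper's proof is a one-line remark; your added verifications (fibration property, connectedness, surjectivity at the tail) are sound but routine.
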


\begin{proof}
We use the long exact sequence of homotopy groups induced by $S^{1} \hookrightarrow \mathcal{L}_{x} \rightarrow X_{\Sigma^{\prime}}$.
\end{proof}
The cohomology groups of the base space and the total space of a sphere bundle are closely related. Chern and Spanier in \cite{chern1950homology} introduced an exact cohomology sequence for sphere bundles. Naturally, there also exists an exact sequence of homology groups for a given sphere bundle. For a given $S^d$
-bundle, $f:X \longrightarrow B$, the long exact sequence of homology groups is given by
\begin{align}
\dots \xrightarrow{f_{\ast}} H_{p+1}(B) \xrightarrow{\Psi^{\prime}}H_{p-d}(B)
 \xrightarrow{\Phi^{\prime}} H_{p}(X) \xrightarrow{f_{\ast}} H_{p}(B) \xrightarrow{\Psi^{\prime}} \dots .
\end{align}
The homomorphisms $f_{\ast}$ and $\Phi^{\prime}$ are explicitly defined in \cite{chern1950homology}. Since their exact form is not needed for our purposes, we omit their definitions. The definition of $\Psi^{\prime}:H_{p}(B) \longrightarrow H_{p-d-1}(B)$ will be provided later. Our goal is to prove the following proposition.
\begin{proposition}\label{BettiComp}
	Let $\mathcal{L}_{x}$ be the link of an isolated point in a toric variety of real dimension 8. Let $\pi: \mathcal{L}_{x} \longrightarrow X_{\Sigma}$ be an $S^{1}$-bundle as in Theorem \ref{HomtGr}. Then the non-combinatorial invariant parameter $b$ in the Betti numbers $b_{2}^{X_{\Sigma}}$ and $b_{3}^{X_{\Sigma}}$ introduced by McConnell in \cite{mcconnell1989rational} or \cite{banagl2024link} equals the parameter $b_{2}$ in the Betti numbers of $\mathcal{L}_{x}$ as defined in Theorem \ref{MainTheo2}.
\end{proposition}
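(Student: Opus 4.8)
The plan is to compare the two lists of Betti numbers through the Chern--Spanier long exact homology sequence of the oriented circle bundle $\pi:\mathcal{L}_{x}\longrightarrow X_{\Sigma}$ supplied by Theorem \ref{HomtGr}, after first translating the combinatorics. Concretely, I would begin by expressing the cone numbers of $\Sigma$ in terms of the face numbers $f_{1},f_{2}$ of the underlying $3$-polytope $\mathcal{M}$ of $\mathcal{L}_{x}$. By the construction in Theorem \ref{HomtGr}, $\Sigma$ is the image of the topological boundary $\partial\Sigma_{x}$ of the $4$-dimensional cone $\Sigma_{x}$ under a projection along an interior ray, and this projection is injective on proper faces of $\Sigma_{x}$, so the number $d_{i}$ of $i$-dimensional cones of $\Sigma$ equals the number of $i$-dimensional faces of $\Sigma_{x}$. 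Combining this with the cone--face duality of the ambient $8$-dimensional toric variety and with the relation $\dim\gamma_{\sigma}=\dim\sigma-1$ from the construction of links, one gets $d_{1}=f_{1}$ (rays of $\Sigma$ correspond to $2$-faces of $\mathcal{M}$), $d_{2}=f_{2}$, and, by Euler's formula for the $3$-polytope $\mathcal{M}$, $d_{3}=f_{2}-f_{1}+2$. In particular $\chi(X_{\Sigma})=d_{3}=f_{2}-f_{1}+2$, since the Euler characteristic of a complete toric variety equals its number of maximal cones.

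Next I would run the Chern--Spanier sequence. Since $\pi$ is an oriented $S^{1}$-bundle, the connecting homomorphism $\Psi^{\prime}$ lowers homological degree by $2$; write $r_{n}=\operatorname{rk}\bigl(\Psi^{\prime}:H_{n}(X_{\Sigma})\to H_{n-2}(X_{\Sigma})\bigr)$. Splitting the sequence into short exact pieces gives
\[
b_{n}^{\mathcal{L}_{x}}=\dim\operatorname{coker}\bigl(\Psi^{\prime}:H_{n+1}(X_{\Sigma})\to H_{n-1}(X_{\Sigma})\bigr)+\dim\ker\bigl(\Psi^{\prime}:H_{n}(X_{\Sigma})\to H_{n-2}(X_{\Sigma})\bigr),
\]
equivalently $b_{n}^{\mathcal{L}_{x}}=b_{n-1}^{X_{\Sigma}}+b_{n}^{X_{\Sigma}}-r_{n}-r_{n+1}$. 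Now $X_{\Sigma}$ is a complete toric $3$-fold, so $b_{0}^{X_{\Sigma}}=b_{6}^{X_{\Sigma}}=1$, $H_{1}(X_{\Sigma};\mathbb{Q})=H_{5}(X_{\Sigma};\mathbb{Q})=0$, and hence $r_{n}=0$ unless $n\in\{2,4,6\}$. Inserting the Betti numbers of $\mathcal{L}_{x}$ from Theorem \ref{MainTheo2}, the equations at $n=1$ and $n=6$ force $r_{2}=r_{6}=1$; the equation at $n=5$ gives $b_{4}^{X_{\Sigma}}=f_{1}-3$; the equation at $n=2$ gives $b_{2}^{X_{\Sigma}}=f_{1}-3-b_{2}$; the equations at $n=3,4$ coincide and give $b_{3}^{X_{\Sigma}}=2f_{1}-f_{2}-3+r_{4}$, and comparing with $\chi(X_{\Sigma})=2+b_{2}^{X_{\Sigma}}-b_{3}^{X_{\Sigma}}+b_{4}^{X_{\Sigma}}=f_{2}-f_{1}+2$ pins down $b_{3}^{X_{\Sigma}}=3f_{1}-f_{2}-6-b_{2}$ and $r_{4}=f_{1}-3-b_{2}$ (so $\Psi^{\prime}:H_{4}(X_{\Sigma})\to H_{2}(X_{\Sigma})$ is surjective); the remaining equations at $n=0,7$ are automatic.

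Finally I would match this with McConnell's description. By \cite{mcconnell1989rational} (see also the discussion in \cite{banagl2024link}), for a complete toric $3$-fold one has $b_{4}^{X_{\Sigma}}=d_{1}-3$, while $b_{2}^{X_{\Sigma}}$ and $b_{3}^{X_{\Sigma}}$ are the corresponding combinatorial expressions in $d_{1},d_{2}$ diminished by one common non-negative non-combinatorial invariant $b$; in particular the combinatorial part of $b_{2}^{X_{\Sigma}}$ is $d_{1}-3$, as it coincides with $b_{4}^{X_{\Sigma}}$ exactly when $b=0$ (for example in the simplicial case). Using $d_{1}=f_{1}$ and $d_{2}=f_{2}$ from the first step and comparing with $b_{2}^{X_{\Sigma}}=f_{1}-3-b_{2}$ from the second step yields $b=b_{2}$, and the degree-$3$ formulas are then consistent as well. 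I expect the combinatorial dictionary to be the crux: one must carry the bookkeeping through two layers --- the cone--face duality of the ambient $8$-dimensional variety and the projection defining $\Sigma$ --- to be certain that the combinatorial part of $b_{2}^{X_{\Sigma}}$ is exactly $f_{1}-3=d_{1}-3$ and not some other function of $(d_{1},d_{2},d_{3})$. A lesser point is the verification that $H_{1}(X_{\Sigma};\mathbb{Q})=H_{5}(X_{\Sigma};\mathbb{Q})=0$, so that the connecting maps into those groups vanish; once these are in place, the identity $b=b_{2}$ is forced by exactness of the Chern--Spanier sequence together with the Euler characteristic count.
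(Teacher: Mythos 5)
Your proof is correct and follows essentially the same route as the paper: run the Chern--Spanier homology sequence for the oriented $S^{1}$-bundle $\pi:\mathcal{L}_{x}\to X_{\Sigma}$, identify $f_{1},f_{2}$ with the numbers of $1$- and $2$-dimensional cones of $\Sigma$ (the paper phrases this as the projection $\pi'$ preserving the face/cone counts), extract $b_{2}^{X_{\Sigma}}=f_{1}-3-b_{2}$, and match it against McConnell's $b_{2}^{X_{\Sigma}}=f_{1}-3-b$. Your Euler-characteristic cross-check that pins down $b_{3}^{X_{\Sigma}}$ and $r_{4}$ is a useful consistency check but is not needed for the conclusion --- once $b_{2}^{X_{\Sigma}}$ is computed from the degree-$2$ segment of the sequence, comparison with the cited formula already forces $b=b_{2}$, which is exactly how the paper closes the argument.
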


\begin{proof}
    We begin by studying the aforementioned long exact sequence of homology groups. Since $\dim(\mathcal{L}_{x})=7$, we obtain
    \begin{align*}
    	0 &\rightarrow H_{6}(X_{\Sigma}) \rightarrow H_{7}(\mathcal{L}_{x}) \rightarrow 0 \rightarrow \overbrace{H_{5}(X_{\Sigma})}^{=0} \rightarrow H_{6}(\mathcal{L}_{x}) \rightarrow H_{6}(X_{\Sigma}) \\
    	&\rightarrow H_{4}(X_{\Sigma}) \rightarrow H_{5}(\mathcal{L}_{x}) \rightarrow \overbrace{H_{5}(X_{\Sigma})}^{=0} \rightarrow H_{3}(X_{\Sigma}) \rightarrow H_{4}(\mathcal{L}_{x}) \rightarrow H_{4}(X_{\Sigma}) \\
    	&\rightarrow H_{2}(X_{\Sigma}) \rightarrow H_{3}(\mathcal{L}_{x}) \rightarrow H_{3}(X_{\Sigma}) \rightarrow \overbrace{H_{1}(X_{\Sigma})}^{0} \rightarrow H_{2}(\mathcal{L}_{x}) \rightarrow H_{2}(X_{\Sigma}) \\
    	&\rightarrow H_{0}(X_{\Sigma}) \rightarrow H_{1}(\mathcal{L}_{x}) \rightarrow \overbrace{H_{1}(X_{\Sigma})}^{=0} \rightarrow 0 \rightarrow H_{0}(\mathcal{L}_{x}) \rightarrow H_{0}(B) \rightarrow 0.
    \end{align*}
    Hence, we arrive at the following exact sequences.
    \begin{align*}
    &0 \rightarrow H_{6}(X_{\Sigma}) \rightarrow H_{7}(\mathcal{L}_{x}) \rightarrow 0	\\
    	&0 \rightarrow H_{6}(\mathcal{L}_{x}) \rightarrow H_{6}(X_{\Sigma}) \rightarrow H_{4}(X_{\Sigma}) \rightarrow H_{5}(\mathcal{L}_{x}) \rightarrow 0 \\
    	&0 \rightarrow H_{3}(X_{\Sigma}) \rightarrow H_{4}(\mathcal{L}_{x}) \rightarrow H_{4}(X_{\Sigma}) 
    	\rightarrow H_{2}(X_{\Sigma}) \rightarrow H_{3}(\mathcal{L}_{x}) \rightarrow H_{3}(X_{\Sigma}) \rightarrow 0 \\
    	&0 \rightarrow H_{2}(\mathcal{L}_{x}) \rightarrow H_{2}(X_{\Sigma}) 
    	\rightarrow H_{0}(X_{\Sigma}) \rightarrow H_{1}(\mathcal{L}_{x}) \rightarrow 0\\
        &0 \rightarrow H_{0}(\mathcal{L}_{x}) \rightarrow H_{0}(B) \rightarrow 0.
    \end{align*}
    The exact sequences above yield the following equalities between Betti numbers.
    \begin{align*}
     b_{7}^{\mathcal{L}_{x}}=1, \; b_{5}^{\mathcal{L}_{x}} = b_{4}^{X_{\Sigma}} -1, \; b_{4}^{\mathcal{L}_{x}} -b_{3}^{\mathcal{L}_{x}} = b_{4}^{X_{\Sigma}} -b_{2}^{X_{\Sigma}}, \; b_{2}^{\mathcal{L}_{x}} =b_{2}^{X_{\Sigma}}-1,\; b_{0}^{\mathcal{L}_{x}} = b_{0}^{X_{\Sigma}},
    \end{align*}
    where we used $b_{6}^{\mathcal{L}_{x}}=b_{1}^{\mathcal{L}_{x}}=0$ and $b_{6}^{X_{\Sigma}}=b_{0}^{X_{\Sigma}}=1$. Note that from the construction of the projection map $\pi^{\prime}$, defined in the proof of Theorem \ref{HomtGr}, the numbers of 1-dimensional and 2-dimensional faces of the underlying polytopes of $\mathcal{L}_{x}$ and $X_{\Sigma}$ are equal. Hence, the numbers $f_{1}^{\mathcal{L}_{x}}$ and $f_{2}^{\mathcal{L}_{x}}$, defined in Theorem \ref{MainTheo2}, equal the numbers $f_{1}^{X_{\Sigma}}$ and $f_{2}^{X_{\Sigma}}$, which are, respectively, the numbers of 1-dimensional and 2-dimensional cones in the underlying fan of $X_{\Sigma}$. Therefore, by using the explicit form of the Betti numbers of $X_{\Sigma}$ provided in \cite{mcconnell1989rational} or \cite{banagl2024link}, the claim follows. 
\end{proof}
By reviewing the proof of the above theorem we arrive at the following result.
\begin{theorem}
		Let $\mathcal{L}_{x}$ be the link of an isolated point in a toric variety of real dimension 8. Let $\pi: \mathcal{L}_{x} \longrightarrow X_{\Sigma}$ be an $S^{1}$-bundle, as in Theorem \ref{HomtGr}. Let $\Omega \in H^{2}(X_{\Sigma})$ denote the characteristic cohomology class of this $S^{1}$-bundle, as defined in \cite[Theorem 3.3]{chern1950homology}. Then, the non-combinatorial invariant parameter $b$ in the Betti numbers of $X_{\Sigma}$, defined in \cite{mcconnell1989rational} or \cite{banagl2024link}, is given by
		\begin{align*}
			b=\operatorname{rk}\big(\operatorname{ker}(\Psi^{\prime}:H_{4}(X_{\Sigma}) \longrightarrow H_{2}(X_{\Sigma}))\big),
		\end{align*}
		where $\Psi^{\prime}(z)= \Omega \cap z$ for all $z \in H_{4}(X_{\Sigma})$.
\end{theorem}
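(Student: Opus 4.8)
The goal is to identify the non-combinatorial parameter $b$ of $X_{\Sigma}$ with $\operatorname{rk}\ker(\Psi'\colon H_4(X_{\Sigma})\to H_2(X_{\Sigma}))$, and to recognize $\Psi'$ as cap product with the Euler class $\Omega$. The strategy is to combine the Chern–Spanier homology sequence already set up in the proof of Proposition \ref{BettiComp} with Lemma \ref{b2alg}, and then to invoke the description of $\Psi'$ from \cite{chern1950homology}. First I would revisit the portion of the long exact sequence centered at degree $4$:
\begin{align*}
0 \to H_{3}(X_{\Sigma}) \to H_{4}(\mathcal{L}_{x}) \to H_{4}(X_{\Sigma}) \xrightarrow{\ \Psi'\ } H_{2}(X_{\Sigma}) \to H_{3}(\mathcal{L}_{x}) \to H_{3}(X_{\Sigma}) \to 0.
\end{align*}
By exactness the map $H_4(\mathcal{L}_x)\to H_4(X_{\Sigma})$ has image exactly $\ker\Psi'$, so $\operatorname{rk}\ker\Psi' = b_4^{X_{\Sigma}} - (b_4^{\mathcal{L}_x} - b_3^{X_{\Sigma}})$; equivalently, using exactness at $H_2(X_{\Sigma})$, $\operatorname{rk}\ker\Psi' = b_4^{X_{\Sigma}} - \operatorname{rk}(\operatorname{im}\Psi')$ and $\operatorname{rk}(\operatorname{im}\Psi') = b_2^{X_{\Sigma}} - (b_3^{\mathcal{L}_x} - b_3^{X_{\Sigma}})$.

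\textbf{Matching with $b$.} Next I would substitute the known Betti numbers. From Proposition \ref{BettiComp} we already have $b_2^{\mathcal{L}_x} = b_2^{X_{\Sigma}} - 1$ and $b_4^{\mathcal{L}_x} - b_3^{\mathcal{L}_x} = b_4^{X_{\Sigma}} - b_2^{X_{\Sigma}}$, and we know $f_1^{\mathcal{L}_x}=f_1^{X_{\Sigma}}$, $f_2^{\mathcal{L}_x}=f_2^{X_{\Sigma}}$. Plugging McConnell's formulas for $b_2^{X_{\Sigma}}, b_3^{X_{\Sigma}}$ (which carry the single non-combinatorial parameter $b$) together with Theorem \ref{MainTheo2}'s formulas for $b_3^{\mathcal{L}_x}, b_4^{\mathcal{L}_x}$ (which carry $b_2$), the displayed rank computation collapses to $\operatorname{rk}\ker\Psi' = b$. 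In fact Proposition \ref{BettiComp} has already established $b = b_2$, so it suffices to check $\operatorname{rk}\ker(\Psi'\colon H_4(X_{\Sigma})\to H_2(X_{\Sigma})) = b_2$; by exactness this amounts to $b_4^{\mathcal{L}_x} - b_3^{X_{\Sigma}} = b_4^{X_{\Sigma}} - b_2$, which is a direct consequence of $b_4^{\mathcal{L}_x} = 3f_1 - f_2 - 6 - b_2$, $b_4^{X_{\Sigma}} = 3f_1 - f_2 - 6$ (say), and $b_3^{X_{\Sigma}} = b_4^{X_{\Sigma}}$ in the relevant range — a short bookkeeping step. This reduces the theorem to identifying $\Psi'$.

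\textbf{Identifying $\Psi'$ with cap product with $\Omega$.} The remaining point is that the connecting map $\Psi'\colon H_p(B)\to H_{p-d-1}(B)$ in the Chern–Spanier homology sequence of an $S^d$-bundle is, up to sign, cap product with the Euler/characteristic class $\Omega\in H^{d+1}(B)$. For $d=1$ this is $\Psi'(z) = \Omega\cap z$ on $H_4(X_{\Sigma})$. I would derive this by dualizing: the Chern–Spanier \emph{cohomology} sequence has its connecting homomorphism given by cup product with $\Omega$ (this is essentially the Gysin sequence, and is how $\Omega$ is defined in \cite[Theorem 3.3]{chern1950homology}), and under the universal-coefficient / Kronecker pairing (everything is over $\mathbb{Q}$, all groups finite-dimensional, and $X_{\Sigma}$ is a compact pseudomanifold so the pairings are nondegenerate) the homology connecting map is the adjoint of the cohomology one, which is cap-with-$\Omega$. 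I would state this as a lemma, citing \cite{chern1950homology} for the cohomology Gysin sequence and the compatibility of cap and cup under the Kronecker pairing, and then conclude.

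\textbf{Main obstacle.} The genuinely nontrivial step is the last one — pinning down $\Psi'$ as cap product with $\Omega$ with the correct normalization, since \cite{chern1950homology} defines $\Psi'$ somewhat indirectly and the paper explicitly declines to reproduce the definition. One must be careful that the "characteristic cohomology class" of \cite[Theorem 3.3]{chern1950homology} really is the Euler class governing the Gysin sequence (true for oriented $S^1$-bundles, i.e. $\Omega = e$ is the classifying $H^2$-class) and that the duality between the homology and cohomology sequences is the naive Kronecker-adjoint one; the sign is irrelevant here since we only want the rank of the kernel. The Betti-number bookkeeping, by contrast, is routine given Proposition \ref{BettiComp} and Theorem \ref{MainTheo2}.
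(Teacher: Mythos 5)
Your overall strategy matches the paper's: read off $\operatorname{rk}\ker\Psi'$ from the Chern--Spanier long exact sequence of the $S^1$-bundle $\mathcal L_x\to X_\Sigma$ established in Theorem \ref{HomtGr}, feed in the known Betti numbers, and treat the identification $\Psi'=\Omega\cap-$ as coming from \cite{chern1950homology} (which the paper also does by citation rather than re-derivation). But the bookkeeping in the middle of your argument is wrong in three places and would not compile into the stated answer.

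First, your formula for $\operatorname{rk}\ker\Psi'$ is actually the rank of the \emph{image} of $\Psi'$, not the kernel. From the segment
\begin{align*}
0\longrightarrow H_3(X_\Sigma)\longrightarrow H_4(\mathcal L_x)\stackrel{\beta}{\longrightarrow} H_4(X_\Sigma)\stackrel{\Psi'}{\longrightarrow} H_2(X_\Sigma)\longrightarrow\cdots
\end{align*}
exactness gives $\ker\Psi'=\operatorname{im}\beta$ and $\operatorname{rk}\ker\beta=b_3^{X_\Sigma}$, hence $\operatorname{rk}\ker\Psi'=b_4^{\mathcal L_x}-b_3^{X_\Sigma}$. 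The quantity $b_4^{X_\Sigma}-(b_4^{\mathcal L_x}-b_3^{X_\Sigma})$ you wrote is $\operatorname{rk}\operatorname{coker}\beta=\operatorname{rk}\operatorname{im}\Psi'$. Second, you misquote Theorem \ref{MainTheo2}: the parameter $b_2$ sits in $b_3^{\mathcal X}$, not $b_4^{\mathcal X}$; the correct value is $b_4^{\mathcal L_x}=3f_1-f_2-6$ (no $b_2$), while $b_3^{\mathcal L_x}=3f_1-f_2-6-b_2$. Third, the claim $b_3^{X_\Sigma}=b_4^{X_\Sigma}$ is false in general; for a compact toric $6$-fold one has $b_4^{X_\Sigma}=3f_1-f_2-6$ but $b_3^{X_\Sigma}=3f_1-f_2-6-b$, so they differ precisely by the non-combinatorial parameter $b$ that the theorem is about. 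Once these are corrected, the computation collapses immediately: $\operatorname{rk}\ker\Psi' = b_4^{\mathcal L_x}-b_3^{X_\Sigma} = (3f_1-f_2-6)-(3f_1-f_2-6-b)=b$, using $f_i^{\mathcal L_x}=f_i^{X_\Sigma}$ from the proof of Proposition \ref{BettiComp}. This is exactly the paper's one-line calculation. Your final step, recognizing $\Psi'$ as $\Omega\cap-$ via the duality with the cohomology Gysin sequence, is a reasonable gloss on what \cite{chern1950homology} supplies and is consistent with the paper's treatment.
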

\begin{proof}
	The homomorphisms $\Psi^{\prime}:H_{p}(B) \longrightarrow H_{p-d-1}(B)$ associated to an $S^{d}$-bundle have been studied in \cite{chern1950homology}. For the $S^{1}$-bundle $\pi:\mathcal{L}_{x} \longrightarrow X_{\Sigma}$, the claim follows from the exactness of sequence
	\begin{align*}
		0 \rightarrow H_{3}(X_{\Sigma}) \rightarrow H_{4}(\mathcal{L}_{x}) \rightarrow H_{4}(X_{\Sigma}) 
		\rightarrow H_{2}(X_{\Sigma}) \rightarrow H_{3}(\mathcal{L}_{x}) \rightarrow H_{3}(X_{\Sigma}) \rightarrow 0.
	\end{align*} 
	More precisely, one obtains $\operatorname{rk}\big(\operatorname{ker}(H_{4}(X_{\Sigma}) \longrightarrow H_{2}(X_{\Sigma}))\big)=b_{4}^{\mathcal{L}_{x}}-b_{3}^{X_{\Sigma}}$. On the other hand, $b_{4}^{\mathcal{L}_{x} }=3 f_{1}^{\mathcal{L}_{x} }-f_{2}^{\mathcal{L}_{x}}-6$ and $b_{3}^{X_{\Sigma} }=3 f_{1}^{X_{\Sigma} }-f_{2}^{X_{\Sigma}}-6-b$. The numbers $f_{1}$ and $f_{2}$ denote the number of 2-dimensional and 1-dimensional faces of the underlying polytopes of the corresponding space. As noted in the proof of Theorem \ref{BettiComp}, $f_{1}^{\mathcal{L}_{x} }=f_{1}^{X_{\Sigma} }$ and $f_{2}^{\mathcal{L}_{x} }=f_{2}^{X_{\Sigma} }$. The claim then follows.
\end{proof}

\end{document}